\theoremstyle{plain}
\newtheorem{theorem}{Theorem}[section]
\newtheorem{proposition}[theorem]{Proposition}
\newtheorem{lemma}[theorem]{Lemma}
\newtheorem{corollary}[theorem]{Corollary}
\theoremstyle{definition} 
\newtheorem{definition}[theorem]{Definition} 
\newtheorem{example}[theorem]{Example}
\theoremstyle{remark}
\newtheorem{remark}[theorem]{Remark}
\let\c@equation\c@theorem
\numberwithin{equation}{section}
\newcommand{\lra}{\longrightarrow}
\newcommand{\Cat}{\mathbf{Cat}}
\newcommand{\Set}{\mathbf{Set}}
\newcommand{\sSet}{\mathbf{sSet}}
\newcommand{\Hom}{\mathrm{Hom}}
\newcommand{\Ho}{\operatorname{Ho}}
\newcommand{\ho}{\operatorname{ho}}
\newcommand{\Bicat}{\operatorname{\mathbf{Bicat}}}
\newcommand{\sk}{\mathrm{sk}}
\newcommand{\cosk}{\mathrm{cosk}}
\newcommand{\cd}[2][]{\vcenter{\hbox{\xymatrix#1{#2}}}}
\newcommand{\dtwocell}[3][0.5]{\ar@{}[#2] \ar@{=>}?(#1)+/u  0.2cm/;?(#1)+/d 0.2cm/^{#3}}
\newcommand{\ltwocell}[3][0.5]{\ar@{}[#2] \ar@{=>}?(#1)+/r 0.2cm/;?(#1)+/l 0.2cm/_{#3}}
\newcommand{\pushoutcorner}[1][dr]{\save*!/#1+1.2pc/#1:(1,-1)@^{|-}\restore}
\newcommand{\fatpullbackcorner}[1][dr]{\save*!/#1-1.75pc/#1:(-1,1)@^{|-}\restore}
\newcommand{\fatpushoutcorner}[1][dr]{\save*!/#1+1.75pc/#1:(1,-1)@^{|-}\restore}
\def\matrixobject@{%
  \edef \next@{={\DirectionfromtheDirection@ }}%
  \expandafter \toks@ \next@ \plainxy@
  \let\xy@@ix@=\xyq@@toksix@
  \xyFN@ \OBJECT@}
\let\xy@entry@@norm=\entry@@norm
\def\entry@@norm@patched{%
  \let\object@=\matrixobject@
  \xy@entry@@norm }
\newcommand{\hdash}{\rotatebox[origin=c]{90}{$\vdash$}}
\title{On truncated quasi-categories}
\author{Alexander Campbell}
\address{Centre of Australian Category Theory \\ Macquarie University \\ NSW 2109 \\ Australia}
\urladdr{http://web.science.mq.edu.au/~alexc/ \\ https://sites.google.com/view/edoardo-lanari/}
\author{Edoardo Lanari}
\subjclass[2020]{18N40, 18N50, 18N55, 18N60}
\date{23 January 2020}
\begin{document}

\begin{abstract} For each $n \geq -1$, a quasi-category is said to be $n$-truncated if its hom-spaces are $(n-1)$-types. In this paper we study the model structure for $n$-truncated quasi-categories, which we prove can be constructed as the Bousfield localisation of Joyal's model structure for quasi-categories with respect to the boundary inclusion of the $(n+2)$-simplex. Furthermore, we prove the expected Quillen equivalences between categories and $1$-truncated quasi-categories and between $n$-truncated quasi-categories and Rezk's $(n,1)$-$\Theta$-spaces.
\end{abstract}

\maketitle

\tableofcontents

\section{Introduction} \label{secintro}
Quasi-categories were introduced by Boardman and Vogt \cite[\S IV.2]{MR0420609}, and were developed by Joyal \cite{MR1935979,joyalbarcelona} and Lurie \cite{MR2522659} among others as a model for $(\infty,1)$-categories:\  (weak) infinite-dimensional categories in which every morphism above dimension $1$ is (weakly) invertible. Among the $(\infty,1)$-categories are the $(n,1)$-categories, which have no non-identity morphisms above dimension $n$.\footnote{This description is accurate only for $n \geq 1$; it is natural to identify $(0,1)$-categories with posets and $(-1,1)$-categories with truth values (i.e.\ $0$ and $1$). See \cite{MR2664619} for a discussion of this point.} In \cite[\S2.3.4]{MR2522659}, Lurie identified the quasi-categories that model $(n,1)$-categories (for $n\geq 1$) as those in which every inner horn above dimension $n$ has a unique filler. Moreover, he proved that a quasi-category is equivalent to such a quasi-category precisely when its hom-spaces are homotopy $(n-1)$-types (i.e.\ Kan complexes whose homotopy groups are trivial above dimension $n-1$);  in \cite[\S26]{joyalnotes}, Joyal called quasi-categories with this latter property  \emph{$n$-truncated}, and stated without proof a collection of assertions on $n$-truncated quasi-categories.
 
In this paper, we prove  (Theorem \ref{newthm}) that, for each $n \geq -1$, the $n$-truncated quasi-categories are the fibrant objects of the Bousfield localisation of Joyal's model structure for quasi-categories with respect to the boundary inclusion $\partial\Delta^{n+2} \lra \Delta^{n+2}$. 
(Note that the existence of the model structure for $n$-truncated quasi-categories was stated without proof in Joyal's notes \cite[\S26.5]{joyalnotes}. However, our construction and identification of this model structure as the Bousfield localisation of Joyal's model structure for quasi-categories with respect to the boundary inclusion $\partial\Delta^{n+2} \lra \Delta^{n+2}$ is new to this paper; see Remark \ref{nosmall}.) Moreover, we prove  (Theorem \ref{wethm}) Joyal's assertion (stated without proof in \cite[\S26.6]{joyalnotes}) that, if $n \geq 1$, a morphism of quasi-categories is a weak equivalence in this model structure if and only if it is essentially surjective on objects and an $(n-1)$-equivalence on hom-spaces.

Furthermore, we prove (Theorem \ref{jtrezkthm})  that the two Quillen equivalences 
\begin{equation*}
\xymatrix{
[\Delta^\mathrm{op},\Set] \ar@<-1.5ex>[rr]^-{\hdash}_-{t^!} && \ar@<-1.5ex>[ll]_-{t_!} [(\Delta\times\Delta)^\mathrm{op},\Set]
}
\qquad\quad
\xymatrix{
[(\Delta\times\Delta)^\mathrm{op},\Set] \ar@<-1.5ex>[rr]^-{\hdash}_-{i_1^*} && \ar@<-1.5ex>[ll]_-{p_1^*} [\Delta^\mathrm{op},\Set]
}
\end{equation*}
established by Joyal and Tierney \cite{MR2342834} between the model structures for quasi-categories and complete Segal spaces remain Quillen equivalences between the model structures for $n$-truncated quasi-categories and Rezk's $(n,1)$-$\Theta$-spaces \cite{MR2578310}, which are another model for $(n,1)$-categories. We also prove (Theorem \ref{cat1qcat}) that the nerve functor $N \colon \Cat \lra \sSet$ is the right adjoint of a Quillen equivalence between the folk model structure for categories and the model structure for $1$-truncated quasi-categories, and hence (Theorem \ref{thmqrezk}) that the composite adjunction 
\begin{equation*} 
\xymatrix{
\Cat \ar@<-1.5ex>[rr]^-{\hdash}_-N && \ar@<-1.5ex>[ll]_-{\tau_1} [\Delta^\mathrm{op},\Set] \ar@<-1.5ex>[rr]^-{\hdash}_-{t^!} && \ar@<-1.5ex>[ll]_-{t_!} [(\Delta\times\Delta)^\mathrm{op},\Set],
}
\end{equation*}
whose right adjoint is Rezk's ``classifying diagram'' functor \cite{MR1804411},
 is a Quillen equivalence between the model structures for categories and Rezk's $(1,1)$-$\Theta$-spaces.

The need for the $n=1$ case of these results arose during the first-named author's work on the paper \cite{campbellnerve}, wherein they serve as  part of the proofs that certain adjunctions
\begin{equation*} 
\xymatrix{
\Bicat_\mathrm{s} \ar@<-1.5ex>[rr]^-{\hdash}_-N && \ar@<-1.5ex>[ll]_-{\tau_b} [\Theta_2^\mathrm{op},\Set] \ar@<-1.5ex>[rr]^-{\hdash}_-{t^!} && \ar@<-1.5ex>[ll]_-{t_!} [(\Theta_2\times\Delta)^\mathrm{op},\Set]
}
\end{equation*}
are Quillen equivalences between Lack's model structure for bicategories \cite{MR2138540}, the Bousfield localisation of Ara's model structure for $2$-quasi-categories \cite{MR3350089} with respect to the boundary inclusion $\partial\Theta_2[1;3] \lra \Theta_2[1;3]$, and Rezk's model structure for $(2,2)$-$\Theta$-spaces \cite{MR2578310}.

We begin this paper in  \S\ref{secsimp} with a collection of some preliminary notions and results pertaining to simplicial sets and $n$-types. Our study of $n$-truncated quasi-categories begins in \S\ref{sectruncated}, where we construct the model structure for  $n$-truncated quasi-categories, and continues in \S\ref{seccatnequiv}, where we characterise the weak equivalences of this model structure. 
Finally, in \S \ref{secquillen} we  prove the aforementioned Quillen equivalences between the model categories of categories and $1$-truncated quasi-categories and between the model categories of $n$-truncated quasi-categories and Rezk's $(n,1)$-$\Theta$-spaces. In an appendix \S \ref{secbousfield}, we recall some of the basic theory of Bousfield localisations of model categories, including two criteria for detecting Quillen equivalences between Bousfield localisations.

\subsection*{Acknowledgements} 
The first-named author gratefully acknowledges the support of Australian Research Council Discovery Project DP160101519 and Future Fellowship FT160100393. The second-named author gratefully acknowledges the support of a Macquarie University iMQRes PhD scholarship.

\section{Simplicial preliminaries} \label{secsimp}
In this section, we collect some preliminary notions and results pertaining to simplicial sets and homotopy $n$-types (as modelled by Kan complexes) that we will use in the following sections on truncated quasi-categories.  For further background on simplicial sets, see for example \cite{MR0210125}, \cite{MR1711612}, and \cite[Chapitre 2]{MR2294028}.

We begin with the definition of (homotopy) $n$-types, which we will use in the definition of truncated quasi-categories in \S\ref{sectruncated}.

\begin{definition}
Let $n \geq 0$ be an integer. A Kan complex $X$ is said to be an \emph{$n$-type} if, for each object (i.e.\ $0$-simplex)  $x \in X_0$ and each integer $m > n$, the homotopy group $\pi_m(X,x)$ is trivial (i.e.\ $\pi_m(X,x) \cong 1$).
\end{definition}

\begin{example} \label{ex0type}
Every discrete (i.e.\ constant) simplicial set is a $0$-type. Furthermore, a Kan complex $X$ is a $0$-type if and only if the unit morphism $X \lra \mathrm{disc}(\pi_0X)$ of the adjunction
\begin{equation} \label{0typeadj}
\xymatrix{
\Set \ar@<-1.5ex>[rr]^-{\hdash}_-{\mathrm{disc}} && \ar@<-1.5ex>[ll]_-{\pi_0} \sSet
}
\end{equation}
 is a homotopy equivalence. 
\end{example}

It is natural to extend the notion of $n$-type to lower values of $n$ as follows. Recall that a Kan complex $X$ is said to be \emph{contractible} if the unique morphism $X \lra \Delta^0$ is a homotopy equivalence.

\begin{definition} A Kan complex is said to be a \emph{$(-1)$-type} if it is either empty or contractible, and is said to be a \emph{$(-2)$-type} if it is contractible.
\end{definition}

In our study of truncated quasi-categories, we will use the following well-known alternative characterisation of $n$-types in terms of a lifting property (whose proof is a standard exercise).

\begin{proposition} \label{kantype}
Let $n \geq -2$ be an integer. A Kan complex is an $n$-type if and only if it has the right lifting property with respect to the boundary inclusion $\partial\Delta^m \lra \Delta^m$ for every $m \geq n+2$.
\end{proposition}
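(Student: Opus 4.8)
The plan is to establish the two implications separately, using the standard dictionary between lifting properties, homotopy groups, and the homotopy theory of Kan complexes. Throughout, recall that for a Kan complex $X$, having the right lifting property against $\partial\Delta^m \lra \Delta^m$ means precisely that every map $\partial\Delta^m \lra X$ extends to $\Delta^m$; since $X$ is fibrant, such maps up to homotopy (rel boundary, when $m \geq 1$, based at a chosen vertex when we restrict to the image of a point) are classified by $\pi_{m-1}$ of the appropriate based mapping space, and in particular by $\pi_{m-1}(X,x)$ after picking a basepoint. I would handle the three ``small'' cases $n = -2, -1, 0$ either by hand or by reducing them to the stated adjunction characterisations (Example \ref{ex0type} for $n=0$; the definitions of $(-1)$- and $(-2)$-types directly for the others), and then concentrate on $n \geq 1$.

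For the forward direction ($n$-type $\Rightarrow$ RLP against $\partial\Delta^m \lra \Delta^m$ for all $m \geq n+2$): first I would reduce to showing that every map $f \colon \partial\Delta^m \lra X$ is homotopic, rel nothing, to one that extends over $\Delta^m$, and then upgrade this using the homotopy extension property of the cofibration $\partial\Delta^m \lra \Delta^m$ to conclude that $f$ itself extends. To produce the homotopy, restrict attention to a single connected component of $X$ (the image of the connected simplicial set $\partial\Delta^m$, for $m \geq 2$, lands in one component), pick a basepoint $x$ there, and observe that, since $\pi_k(X,x) = 1$ for all $k \geq m-1 > n-1$ — here we use $m \geq n+2$, hence $m - 1 \geq n+1 > n$ — the obstruction to extending $f$ over $\Delta^m$, which is an element of $\pi_{m-1}(X,x)$, vanishes. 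The cleanest way to phrase the obstruction argument is: $X$ being $n$-truncated and $m-1 > n-1$ means $X \lra \cosk_{m-1}(X)$, or rather the relevant Postnikov/coskeletal comparison, is highly connected; alternatively, use that an $n$-type Kan complex is weakly equivalent to its $(n{+}1)$-coskeleton-like truncation, which literally has the RLP by Proposition characterising coskeleta — but I expect the obstruction-theoretic phrasing to be the most self-contained.

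For the converse (RLP against all $\partial\Delta^m \lra \Delta^m$, $m \geq n+2$ $\Rightarrow$ $n$-type): fix a vertex $x \in X_0$ and an integer $k > n-1$, so $k \geq n \geq 1$ and hence $k + 1 \geq n + 1$, i.e.\ $m := k+1 \geq n+2$. I want $\pi_k(X,x) = 1$. An element of $\pi_k(X,x)$ is represented by a map $\Delta^k / \partial\Delta^k \lra X$ sending the collapsed boundary to $x$; equivalently by a map $g \colon \partial\Delta^{k+1} \lra X$ all of whose faces but one are degenerate at $x$ (the Kan complex description of homotopy groups via spheres built from two simplices, or the ``$S^k$ as $\Delta^k \cup_{\partial} \Delta^k$'' model). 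The hypothesised RLP against $\partial\Delta^{k+1} \lra \Delta^{k+1}$ produces a filler, which is exactly a nullhomotopy exhibiting the class as trivial; running this for every such representative shows $\pi_k(X,x) = 1$, and letting $k$ range over all integers $> n-1$ and $x$ over all vertices gives that $X$ is an $n$-type. I would spell out carefully that every element of $\pi_k(X,x)$ admits a representative of the stated special form — this is the one genuinely fiddly point — citing the standard description of $\pi_k$ of a Kan complex.

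The main obstacle, and the place where I would spend the most care, is the obstruction/representability bookkeeping connecting ``faces of an $(m)$-simplex'' to ``elements of $\pi_{m-1}$'' — in particular getting the basepoint handling right when $\partial\Delta^m$ is connected but $X$ is not (addressed by working one component at a time, legitimate since $m \geq 2$), and, in the converse direction, verifying that homotopy classes of maps $\partial\Delta^{k+1}\to X$ that restrict to the right thing on the boundary really do exhaust $\pi_k(X,x)$. Everything else — the three low-dimensional cases, the homotopy extension property of boundary inclusions, the fibrancy-enabled passage between ``extends'' and ``extends up to homotopy'' — is routine and I would state it with a one-line justification or a reference, consistent with the proposition being labelled ``a standard exercise''.
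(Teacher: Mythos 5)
The paper does not actually supply a proof of this proposition --- it is explicitly left as ``a standard exercise'' --- so there is nothing to compare against except the standard argument, which is what you give: obstruction-theoretic vanishing in $\pi_{m-1}$ for the forward direction, and representing classes in $\pi_k$ by maps $\partial\Delta^{k+1}\lra X$ with all but one face degenerate for the converse. Both halves are structurally correct, and you rightly isolate the only delicate points (basepoint/component bookkeeping for $m\ge 2$, and checking that the prescribed faces of $\partial\Delta^{k+1}$ satisfy the simplicial compatibility conditions, which they do because the representative $k$-simplex is constant on its boundary).

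There is, however, an off-by-one in the converse that you should fix before writing this up. You begin with ``fix $\ldots$ an integer $k > n-1$, so $k\ge n$ $\ldots$ hence $m:=k+1\ge n+2$''. From $k\ge n$ one only gets $k+1\ge n+1$, so the hypothesised lifting property against $\partial\Delta^{k+1}\lra\Delta^{k+1}$ is not available; and the conclusion $\pi_k(X,x)=1$ for all $k\ge n$ is false in general (it would make every such $X$ an $(n-1)$-type; a minimal model of $K(G,n)$ is a counterexample, since it has the stated lifting property but $\pi_n\ne 1$). What you want is $k>n$, i.e.\ $k\ge n+1$, which is exactly what the definition of $n$-type requires and which does give $m=k+1\ge n+2$. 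With that correction the argument goes through as you describe. (Minor quibble in the forward direction: the inequality you need and in fact use is $m-1\ge n+1>n$, so the intermediate ``$>n-1$'' is a harmless slip of the same kind.)
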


We will see in Proposition \ref{truncisacyc} that $n$-truncated quasi-categories can be characterised by the same lifting property. For this reason, we now  record this lifting property in the following definition and explore some of its consequences.

\begin{definition} \label{acycdef}
Let $n \geq -1$ be an integer. A simplicial set $X$ is said to be \emph{$n$-acyclic} if it has the right lifting property with respect to the boundary inclusion $\partial\Delta^m \lra \Delta^m$ for every $m >n$.
\end{definition}

In these terms, Proposition \ref{kantype} states that, for every $n \geq -2$, a Kan complex is an $n$-type if and only if it is $(n+1)$-acyclic. Similarly, we will prove in Proposition \ref{truncisacyc} that, for every $n \geq -1$, a quasi-category is $n$-truncated if and only if it is $(n+1)$-acyclic. 
This lifting property will be very useful, as it yields a large class of morphisms with respect to which $n$-truncated quasi-categories have the right lifting property.

\begin{definition} \label{nbijdef}
Let $n \geq -1$ be an integer.
A morphism of simplicial sets $f \colon X \lra Y$ is said to be \emph{$n$-bijective} if the function $f_k \colon X_k \lra Y_k$ is bijective for each $0 \leq k \leq n$. 
\end{definition}

\begin{lemma} \label{satcor}
Let $n \geq -1$ be an integer. A simplicial set is $n$-acyclic if and only if it has the right lifting property with respect to every $n$-bijective monomorphism of simplicial sets.
\end{lemma}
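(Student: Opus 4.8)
The plan is to deduce both directions from the fact that, via a relative skeletal filtration, every $n$-bijective monomorphism is built out of the boundary inclusions $\partial\Delta^m \lra \Delta^m$ with $m > n$. The ``if'' direction is then immediate: for each $m > n$ the map $\partial\Delta^m \lra \Delta^m$ is itself an $n$-bijective monomorphism, since $\partial\Delta^m$ and $\Delta^m$ have the same simplices in every dimension $< m$, hence in every dimension $\leq n$; so any simplicial set with the right lifting property against all $n$-bijective monomorphisms is in particular $n$-acyclic.

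For the converse, suppose $X$ is $n$-acyclic and let $f \colon A \lra B$ be an $n$-bijective monomorphism. Identifying $A$ with a simplicial subset of $B$, the hypothesis gives $A_k = B_k$ for all $k \leq n$, so that $\sk_n B \subseteq A$. I would then consider the filtration
\[
A \;=\; A \cup \sk_n B \;\subseteq\; A \cup \sk_{n+1} B \;\subseteq\; A \cup \sk_{n+2} B \;\subseteq\; \cdots,
\]
whose union is $B$, and observe that for each $m \geq n$ the inclusion $A \cup \sk_m B \subseteq A \cup \sk_{m+1} B$ sits in a pushout square whose left-hand leg is a coproduct of copies of $\partial\Delta^{m+1} \lra \Delta^{m+1}$, indexed by the nondegenerate $(m+1)$-simplices of $B$ not contained in $A$ (with attaching map supplied by their boundaries, which lie in $\sk_m B$). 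Since $m+1 > n$ throughout, $f$ is a countable composite of pushouts of coproducts of boundary inclusions $\partial\Delta^m \lra \Delta^m$ with $m > n$, hence belongs to the weakly saturated class these generate. As $X$ being $n$-acyclic says exactly that $X \lra \Delta^0$ lifts on the right against every such boundary inclusion, and as the class of maps that lift on the left against a fixed map is weakly saturated (closed under pushout, transfinite composition, coproduct and retract), it follows that $X \lra \Delta^0$, and therefore $X$, has the right lifting property against $f$.

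The steps that remain are routine: that the displayed filtration exhausts $B$ and that the indicated squares are pushouts is the usual description of a simplicial set via attachment of its nondegenerate simplices, in its relative form; and the closure properties of weakly saturated classes are standard. I do not anticipate a real obstacle. The one point deserving care is that the filtration is started at level $n$ rather than at $-1$: this is precisely where $n$-bijectivity of $f$ enters (through the inclusion $\sk_n B \subseteq A$), and it is what guarantees that only boundary inclusions of simplices of dimension strictly greater than $n$ are required.
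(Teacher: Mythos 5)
Your proof is correct and follows essentially the same route as the paper: the forward direction is the observation that the boundary inclusions $\partial\Delta^m \lra \Delta^m$ for $m > n$ are themselves $n$-bijective monomorphisms, and the converse decomposes an arbitrary $n$-bijective monomorphism as a countable composite of pushouts of coproducts of such boundary inclusions via the relative skeletal filtration (which the paper simply cites from Gabriel--Zisman \S II.3.8 rather than spelling out). No gaps.
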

\begin{proof}
Since the boundary inclusion $\partial\Delta^m \lra \Delta^m$ is an $n$-bijective monomorphism for every $m > n$, any simplicial set with the stated lifting property is $n$-acyclic. Note that any class of morphisms defined by a left lifting property is stable under pushout and closed under coproducts and countable composition. The converse then follows from the fact that any $n$-bijective monomorphism can be decomposed into a countable composite of pushouts of coproducts of the boundary inclusions $\partial\Delta^m \lra \Delta^m$ for $m > n$, as in \cite[\S II.3.8]{MR0210125}.
\end{proof}

\begin{remark}
If $n = -1$, the condition in Definition \ref{nbijdef} is vacuous, and so every morphism of simplicial sets is $(-1)$-bijective.
Hence the $n =-1$ case of Lemma \ref{satcor} states that a simplicial set is $(-1)$-acyclic if and only if it is an injective object in the category of simplicial sets, i.e.\ a contractible Kan complex, i.e.\ a $(-2)$-type. Furthermore, a simplicial set is $0$-acyclic if and only if it is a $(-1)$-type.
\end{remark}

In \S\ref{sectruncated}, we will use the following consequence of Lemma \ref{satcor} to prove that the model structures for $n$-truncated quasi-categories are cartesian.

\begin{lemma} \label{acyccart} Let $n \geq -1$ be an integer.
For every simplicial set $A$ and $n$-acyclic simplicial set $X$,  the internal hom simplicial set $X^A$ is $n$-acyclic.
\end{lemma}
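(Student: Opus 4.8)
The plan is to deduce the lemma from Lemma \ref{satcor} by transposing across the cartesian closed structure of $\sSet$. By Lemma \ref{satcor}, it suffices to prove that $X^A$ has the right lifting property with respect to every $n$-bijective monomorphism $i \colon B \lra C$. Under the adjunction $(-) \times A \dashv (-)^A$, a lifting problem of such an $i$ against the terminal map $X^A \lra \Delta^0$ transposes to a lifting problem of the morphism $i \times \id_A \colon B \times A \lra C \times A$ against $X \lra \Delta^0$. Hence it is enough to check that $i \times \id_A$ is again an $n$-bijective monomorphism whenever $i$ is, for then $X$, being $n$-acyclic, supplies the required lift by Lemma \ref{satcor}, and transposing back solves the original lifting problem.

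This last check is immediate: in each degree $k$ the morphism $i \times \id_A$ is the function $i_k \times \id_{A_k} \colon B_k \times A_k \lra C_k \times A_k$, which is injective because $i_k$ is, and which is moreover a bijection whenever $i_k$ is a bijection, in particular for $0 \leq k \leq n$ since $i$ is $n$-bijective. Therefore $X^A$ has the right lifting property with respect to $i$, and, as $i$ ranges over all $n$-bijective monomorphisms, we conclude from Lemma \ref{satcor} that $X^A$ is $n$-acyclic.

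The argument has no real obstacle: essentially all of the content is absorbed into Lemma \ref{satcor}, whose purpose is precisely to replace the generating lifting property that defines $n$-acyclicity by the larger lifting property against all $n$-bijective monomorphisms, which (unlike the class of boundary inclusions itself) is manifestly stable under the operations $(-) \times A$ and its transpose $(-)^A$. If one preferred to avoid Lemma \ref{satcor}, one could instead observe directly that $\partial\Delta^m \times A \lra \Delta^m \times A$ is $n$-bijective for every $m > n$ and decompose it into a transfinite composite of pushouts of coproducts of boundary inclusions $\partial\Delta^{m'} \lra \Delta^{m'}$ with $m' > n$, but routing through Lemma \ref{satcor} is cleaner.
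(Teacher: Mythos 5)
Your proof is correct and follows essentially the same route as the paper: transpose the lifting problem across the product--hom adjunction, observe that the resulting morphism is an $n$-bijective monomorphism, and invoke Lemma \ref{satcor} for the $n$-acyclic object $X$. The only (harmless) difference is that you verify the lifting property of $X^A$ against all $n$-bijective monomorphisms rather than just the generating boundary inclusions, which the paper checks directly.
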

\begin{proof}
It is required to prove that $X^A$ has the right lifting property with respect to the boundary inclusion $b_m \colon \partial\Delta^m \lra \Delta^m$ for every $m > n$. By adjunction, this is so if and only if $X$ has the right lifting property with respect to the morphism $b_m \times A \colon \partial\Delta^m \times A \lra \Delta^m \times A$ for every $m > n$. But the $n$-acyclic simplicial set $X$ has this  lifting property by Lemma \ref{satcor}, since the morphism $b_m \times A$ is an $n$-bijective monomorphism for every $m > n$. 
\end{proof}

The following lemma shows that the property of $n$-acyclicity can be understood as a weakening of the property of $n$-coskeletality. (Recall that a simplicial set $X$ is said to be \emph{$n$-coskeletal} if the unit morphism $X \lra \cosk_nX$ to its $n$-coskeleton is an isomorphism; dually, $X$ is said to be \emph{$n$-skeletal} if the counit morphism $\sk_nX \lra X$ from its $n$-skeleton is an isomorphism.)

\begin{lemma} \label{acyccosk} Let $n \geq -1$ be an integer.
A simplicial set $X$ is $n$-acyclic if and only if the unit morphism $X \lra \cosk_nX$ is a trivial fibration. 
\end{lemma}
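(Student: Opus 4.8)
The plan is to reduce everything to one auxiliary fact: for every simplicial set $X$, the $n$-coskeleton $\cosk_nX$ is itself $n$-acyclic, and moreover has the \emph{unique} right lifting property with respect to the boundary inclusion $\partial\Delta^m \lra \Delta^m$ for every $m > n$. This follows from the adjunction $\mathrm{tr}_n \dashv \cosk_n$ between $\sSet$ and the category of $n$-truncated simplicial sets: since $\partial\Delta^m$ and $\Delta^m$ have the same simplices in every degree $\leq n$ whenever $m > n$, their $n$-truncations agree, so the restriction map $\sSet(\Delta^m, \cosk_nX) \lra \sSet(\partial\Delta^m, \cosk_nX)$ is identified with an identity function. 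The same observation shows that the component at $X$ of the unit $X \lra \cosk_nX$ is a bijection in each degree $\leq n$ (being an isomorphism on $n$-truncations).

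For the forward implication, suppose $p \colon X \lra \cosk_nX$ is a trivial fibration. Given $m > n$ and a map $u \colon \partial\Delta^m \lra X$, the composite $pu$ extends along $\partial\Delta^m \lra \Delta^m$ because $\cosk_nX$ is $n$-acyclic, and since $p$ has the right lifting property with respect to $\partial\Delta^m \lra \Delta^m$, this extension lifts through $p$ to an extension of $u$. Hence $X$ is $n$-acyclic; this is just the standard fact that $n$-acyclicity transfers along a trivial fibration with $n$-acyclic target.

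For the converse, suppose $X$ is $n$-acyclic, and consider a commutative square with top edge $u \colon \partial\Delta^m \lra X$, bottom edge $v \colon \Delta^m \lra \cosk_nX$, and left edge $\partial\Delta^m \lra \Delta^m$. If $m > n$, extend $u$ to $\tilde u \colon \Delta^m \lra X$ using the $n$-acyclicity of $X$; then $p\tilde u$ and $v$ both restrict to $pu$ along $\partial\Delta^m \lra \Delta^m$, so they coincide by uniqueness of lifts into $\cosk_nX$, and $\tilde u$ is the desired filler. If $m \leq n$, then $p_m$ is a bijection, so $v$ has a unique preimage $\tilde v \in X_m$; since $p$ is injective in every degree $< m$, the equality $p(\tilde v|_{\partial\Delta^m}) = v|_{\partial\Delta^m} = pu$ forces $\tilde v|_{\partial\Delta^m} = u$, and $\tilde v$ is the desired filler. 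Thus $p$ is a trivial fibration.

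The only non-formal ingredient is the auxiliary fact in the first paragraph; the rest is a diagram chase. I expect the main (and only mild) obstacle to be the bookkeeping around the truncation adjunction — in particular identifying the map $X_m \lra (\cosk_nX)_m$ for $m > n$ with the restriction $\sSet(\Delta^m, X) \lra \sSet(\sk_n\Delta^m, X)$ along $\sk_n\Delta^m = \sk_n\partial\Delta^m \hookrightarrow \partial\Delta^m \hookrightarrow \Delta^m$, and checking that the resulting squares commute so that the lifts constructed above genuinely solve the original lifting problems.
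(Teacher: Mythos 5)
Your proof is correct, but it takes a more hands-on route than the paper. The paper's proof is a single adjointness transposition: the unit $X \lra \cosk_nX$ has the right lifting property against $\partial\Delta^m \lra \Delta^m$ if and only if $X$ has the right lifting property against the corner map $\sk_n\Delta^m \cup \partial\Delta^m \lra \Delta^m$, which is an identity for $m \leq n$ and the boundary inclusion $\partial\Delta^m \lra \Delta^m$ for $m > n$; this settles both implications at once with no explicit construction of lifts. You instead keep the lifting problems on the $\cosk_nX$ side and solve them by hand, using the two facts that the unit is bijective in degrees $\leq n$ and that $\cosk_nX$ has the \emph{unique} right lifting property against $\partial\Delta^m \lra \Delta^m$ for $m > n$ (both of which you justify correctly via the truncation adjunction, and the second of which the paper records separately in the remark following the lemma). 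Your case analysis is sound: in the $m > n$ case the uniqueness of extensions into $\cosk_nX$ forces $p\tilde{u} = v$, and in the $m \leq n$ case injectivity of $p$ in degrees $< m$ forces $\tilde{v}|_{\partial\Delta^m} = u$, since a map out of $\partial\Delta^m$ is determined by its values in degrees $\leq m-1$. What the paper's argument buys is brevity and the avoidance of any uniqueness bookkeeping; what yours buys is that it makes visible exactly which structural features of the coskeleton are being used, and it generalises verbatim to any monomorphism that is an isomorphism on $n$-truncations.
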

\begin{proof}
Let $X$ be a simplicial set. By definition, the unit morphism $X \lra \cosk_nX$ is a trivial fibration if and only if it has the right lifting property with respect to the boundary inclusion $\partial\Delta^m \lra \Delta^m$ for each $m \geq 0$. By adjointness, this is so if and only if $X$ has the right lifting property with respect to the inclusion $\sk_n\Delta^m \cup \partial\Delta^m \lra \Delta^m$ for each $m \geq 0$. If $m \leq n$, this inclusion is an identity, and so the lifting property is satisfied trivially. If $m >n$, this inclusion is  the boundary inclusion $\partial\Delta^m \lra \Delta^m$. Thus the two properties in the statement are seen to be equivalent.
\end{proof}

\begin{remark}
By an argument similar to the proof of Lemma \ref{satcor}, one can show that a simplicial set is $n$-coskeletal if and only if it has the \emph{unique} right lifting property with respect to the boundary inclusion $\partial\Delta^m \lra \Delta^m$ for each $m > n$. This gives another sense in which the property of $n$-acyclicity is a weakening of the property of $n$-coskeletality.
\end{remark}

Now, recall that (the simplicial analogue of) Whitehead's theorem states that a morphism of Kan complexes $f \colon X \lra Y$ is a homotopy equivalence if and only if (i) the induced function $\pi_0(f) \colon \pi_0X \lra \pi_0Y$ is a bijection and (ii) for every integer $n \geq 1$ and every object $x$ of $X$, the induced function $\pi_n(f) \colon \pi_n(X,x) \lra \pi_n(Y,fx)$ is a bijection (and hence an isomorphism of groups). 
 We will use the following weakenings of these properties in our characterisation of the weak equivalences in the model structures for $n$-truncated quasi-categories in \S\ref{seccatnequiv}.

\begin{definition}
Let $n \geq 0$ be an integer. A morphism of Kan complexes $f \colon X \lra Y$ is said to be a \emph{homotopy $n$-equivalence} if
\begin{enumerate}[leftmargin=*,label=(\roman*)]
\item the induced function $\pi_0(f) \colon \pi_0X \lra \pi_0Y$ is a bijection, and
\item  the induced function $\pi_k(f) \colon \pi_k(X,x) \lra \pi_k(Y,fx)$ is a bijection (and hence an isomorphism of groups) for every integer $1 \leq k \leq n$ and every object $x \in X$.
\end{enumerate}
\end{definition}

Thus a morphism of Kan complexes is a homotopy $0$-equivalence if and only if it is inverted by the functor $\pi_0 \colon\sSet \lra \Set$. We similarly define a morphism of Kan complexes to be a \emph{homotopy $(-1)$-equivalence} if it is inverted by the functor $\pi_{-1} \colon \sSet \lra \{0<1\}$ that sends the empty simplicial set to $0$ and every nonempty simplicial set to $1$. Thus a morphism of Kan complexes is a homotopy $(-1)$-equivalence if either (i) its domain and codomain are both empty, or (ii) its domain and codomain are both nonempty. 
Furthermore, we define a morphism of Kan complexes to be a \emph{homotopy $(-2)$-equivalence} if it is inverted by the unique functor $\pi_{-2} \colon \sSet \lra \mathbf{1}$ to the terminal category; thus every morphism of Kan complexes is a homotopy $(-2)$-equivalence.

For each $n \geq -2$, a morphism of $n$-types is a homotopy equivalence if and only if it is a homotopy $n$-equivalence: if $n \geq 0$, this follows from Whitehead's theorem; if $n = -2,-1$, this follows from the fact that any morphism between contractible Kan complexes is a homotopy equivalence.

\begin{remark} \label{ntypemodstr}
It is a standard result (cf.\ \cite[\S1.5]{MR1944041} and \cite[\S 9.2]{MR2294028}) that, for each integer $n \geq -2$, the $n$-types are the fibrant objects of the Bousfield localisation of the model structure for Kan complexes with respect to the boundary inclusion $\partial\Delta^{n+2} \lra \Delta^{n+2}$, and that  a morphism of Kan complexes is a weak equivalence in this Bousfield localisation if and only if it is a homotopy $n$-equivalence in the sense of the above definitions. In \S\S\ref{sectruncated}--\ref{seccatnequiv}, we will generalise both of these statements to $n$-truncated quasi-categories.
\end{remark}

We will use the following two properties of the class of homotopy $n$-equivalences in \S\ref{seccatnequiv}. 

\begin{lemma} \label{homo23}
Let $n \geq -2$ be an integer and let $f \colon X \lra Y$ and $g \colon Y \lra Z$ be morphisms of Kan complexes. If two of the morphisms $f,g,gf$ are homotopy $n$-equivalences, then so is the third.
\end{lemma}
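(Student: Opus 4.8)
The plan is to reduce the statement to the analogous ``two-out-of-three'' property for the functors $\pi_{-2}$, $\pi_{-1}$, $\pi_0$, and the homotopy groups $\pi_k$ for $1 \le k \le n$, treating the cases $n \le 0$ and $n \ge 1$ separately since the definition of homotopy $n$-equivalence splits along these lines. For $n = -2$ the statement is vacuous, as every morphism of Kan complexes is a homotopy $(-2)$-equivalence. For $n = -1$ and $n = 0$, a homotopy $n$-equivalence is precisely a morphism inverted by the functor $\pi_n \colon \sSet \to \mathcal{C}$ (where $\mathcal{C}$ is $\{0 < 1\}$ or $\Set$ respectively), and so the result is immediate from the fact that the class of morphisms inverted by any functor satisfies two-out-of-three: if two of $\pi_n(f)$, $\pi_n(g)$, $\pi_n(gf) = \pi_n(g)\pi_n(f)$ are isomorphisms, then so is the third.

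For $n \ge 1$, the argument combines several instances of this same principle. Condition (i) in the definition of homotopy $n$-equivalence is two-out-of-three by the $n = 0$ case just discussed, applied to $\pi_0$. For condition (ii), fix $x \in X$ and an integer $1 \le k \le n$; then $\pi_k(g)\pi_k(f) = \pi_k(gf)$ as functions $\pi_k(X,x) \to \pi_k(Z,gfx)$, so if two of these three are bijections then so is the third. The only subtlety is bookkeeping over basepoints: to verify condition (ii) for, say, $g$ at a basepoint $y \in Y$, one needs $y$ to be in the image (up to the path components identified by $\pi_0$) of a basepoint of $X$. This is exactly where condition (i) for $f$ — the surjectivity of $\pi_0(f)$ — is used: choose $x \in X$ with $fx$ in the same component as $y$, use that $\pi_k$ does not depend on the choice of basepoint within a path component (up to isomorphism), and then read off the desired bijection from the factorisation $\pi_k(gf) = \pi_k(g)\pi_k(f)$.

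The main obstacle, such as it is, is purely organisational rather than mathematical: one must carefully enumerate the three cases (two of $\{f, g, gf\}$ assumed to be homotopy $n$-equivalences) and, in each, check both condition (i) and condition (ii) at every relevant basepoint, taking care that the basepoint-chasing in the previous paragraph goes through in whichever direction is needed. Since all three cases are symmetric in structure and rely only on two-out-of-three for isomorphisms together with basepoint-independence of homotopy groups on path components, no genuinely new idea is required beyond what is already recalled in the surrounding discussion (Whitehead's theorem and the definitions of the functors $\pi_k$). I would therefore present the $n \le 0$ cases in a sentence, and for $n \ge 1$ spell out one representative case in detail, noting that the others follow by the same argument.
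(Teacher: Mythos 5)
Your proposal is correct and is essentially an expansion of the paper's own proof, which simply appeals to ``any of the standard arguments'' establishing the two-out-of-three property for the class of morphisms appearing in Whitehead's theorem. You correctly identify the one genuine subtlety in that standard argument --- using the surjectivity of $\pi_0(f)$ together with basepoint-independence of $\pi_k$ on path components in the case where $f$ and $gf$ are assumed to be homotopy $n$-equivalences --- so nothing is missing.
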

\begin{proof}
This is proved by any of the standard arguments proving that the class of morphisms of Kan complexes described in the statement of Whitehead's theorem enjoys the same property.
\end{proof}

\begin{lemma} \label{bijimpho}
Let $n \geq -2$ be an integer. An $(n+1)$-bijective morphism of Kan complexes is a homotopy $n$-equivalence.
\end{lemma}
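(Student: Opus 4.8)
The plan is to handle the ranges $n = -2$, $n = -1$, and $n \geq 0$ in turn. For $n = -2$ there is nothing to prove, as every morphism of Kan complexes is a homotopy $(-2)$-equivalence. For $n = -1$, the hypothesis asserts that $f_0 \colon X_0 \lra Y_0$ is a bijection; since a simplicial set is empty precisely when it has no vertices (any simplex has a vertex among its iterated faces), this means that $X$ is empty if and only if $Y$ is empty, which is exactly what it means for $f$ to be a homotopy $(-1)$-equivalence.

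Now suppose $n \geq 0$ and let $f \colon X \lra Y$ be $(n+1)$-bijective. Since $\pi_0$ of a simplicial set is the coequalizer of $d_0,d_1 \colon X_1 \rightrightarrows X_0$ and $f$ is bijective in dimensions $0$ and $1$ (both $\leq n+1$), the function $\pi_0(f)$ is a bijection. It remains to prove that $\pi_k(f)\colon \pi_k(X,x) \lra \pi_k(Y,fx)$ is a bijection for every $1 \leq k \leq n$ and every $x \in X_0$; being also a group homomorphism, it will then be an isomorphism of groups, so that $f$ is a homotopy $n$-equivalence as required.

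For this I would use the standard simplicial description of the homotopy groups of a Kan complex (see, e.g., \cite{MR1711612}): for $k \geq 1$, the set $\pi_k(X,x)$ is the quotient of the set of those $\sigma \in X_k$ with $d_i\sigma = s_0^{k-1}x$ for all $0 \leq i \leq k$ by the equivalence relation under which such $\sigma,\sigma'$ are identified exactly when there is a $(k+1)$-simplex of $X$ having $\sigma$ and $\sigma'$ among its faces and all its remaining faces degenerate at $x$. Since $f$ is a simplicial map that is bijective in dimension $k \leq n+1$ (and in dimension $k-1$), it restricts to a bijection between the sets of such ``spherical'' simplices of $X$ and of $Y$, as $f$ both preserves and reflects the condition $d_i\sigma = s_0^{k-1}x$. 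Because $f$ is moreover bijective in dimension $k+1 \leq n+1$, this bijection passes to the quotients: it preserves the relation since $f$ sends a witnessing $(k+1)$-simplex to a witnessing one, and it reflects the relation since any witnessing $(k+1)$-simplex of $Y$ is the image of a unique $(k+1)$-simplex of $X$, which --- $f$ being injective in dimension $k$ --- witnesses the relation in $X$. Hence $\pi_k(f)$ is a bijection.

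The argument is in the end a bookkeeping exercise. The only subtle point --- and the reason the hypothesis requires bijectivity rather than mere surjectivity --- is that the homotopy-rel-boundary relation must be transported in both directions, which forces the use of bijectivity of $f$ on simplices of dimension $k+1$, up to $n+1$; everything else follows from bijectivity in the lower dimensions together with naturality with respect to the simplicial operators.
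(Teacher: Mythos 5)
Your proof is correct and is essentially the paper's argument made explicit: both reduce the claim to the fact that $\pi_0$ and the groups $\pi_k$ for $1 \le k \le n$ are computed entirely from the simplices of dimension at most $n+1$ (the paper phrases this as dependence of $\pi_0$ on the $1$-skeleton and of $\pi_k$ on the $(k+1)$-skeleton; you unwind the same fact via the combinatorial description of $\pi_k$ by spherical simplices). One small inaccuracy: the relation you quote --- a $(k+1)$-simplex having $\sigma$ and $\sigma'$ \emph{among} its faces with the remaining faces degenerate at $x$ --- is strictly coarser than the homotopy relation (for $k=1$, a $2$-simplex with $d_1\omega = s_0x$, $d_0\omega=\sigma$, $d_2\omega=\sigma'$ witnesses $[\sigma']=[\sigma]^{-1}$, not $[\sigma']=[\sigma]$, so your quotient would identify each class with its inverse); the standard relation fixes the positions, e.g.\ $d_k\omega=\sigma$, $d_{k+1}\omega=\sigma'$, and $d_i\omega$ degenerate at $x$ for $i<k$. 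Your transport argument --- preservation by naturality, reflection by injectivity in dimensions $k-1$, $k$, and $k+1$, all of which are $\le n+1$ --- applies verbatim to the corrected relation, so nothing else in the proof changes.
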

\begin{proof}
The cases $n = -2,-1$ are immediate. Suppose $n \geq 0$. The result follows from the facts that the set of connected components of a simplicial set depends only on its $1$-skeleton, and that, for each integer $k \geq 1$, the $k$th homotopy groups of a Kan complex depend only on its $(k+1)$-skeleton (since their elements are pointed homotopy classes of morphisms to $X$ from the (simplicial) $k$-sphere, whose homotopy type can be modelled by a $k$-skeletal simplicial set, e.g.\ $\Delta^k/\partial\Delta^k$ or $\partial\Delta^{k+1}$).
\end{proof}

\section{Truncated quasi-categories} \label{sectruncated}

\emph{Throughout this section, let $n \geq -1$ be an integer.}

\medskip

\begin{remark}
As mentioned in \S1, some of the results of \S\S\ref{sectruncated}--\ref{secquillen} are stated without proof in Joyal's notes \cite[\S26]{joyalnotes}. These results will be indicated below by references to the numbered paragraphs of those notes in which they are stated.  
(Given that one of the purposes of this paper is to provide proofs for these statements, we beg the reader's patience if we spell out the occasional ``obvious'' argument.)
\end{remark}

As recalled in Remark \ref{ntypemodstr}, the (homotopy) $n$-types are the fibrant objects of the Bousfield localisation of the model structure for Kan complexes with respect to the boundary inclusion $\partial\Delta^{n+2} \lra \Delta^{n+2}$, and a morphism of Kan complexes is a weak equivalence in this Bousfield localisation if and only if it is a homotopy $n$-equivalence. The goal of this section and the next is to prove the analogous results for quasi-categories. In this section, we prove that the $n$-truncated  quasi-categories are the fibrant objects of the Bousfield localisation of Joyal's model structure  for quasi-categories with respect to the same boundary inclusion (Theorem \ref{newthm}).  In \S\ref{seccatnequiv}, we will prove that a morphism of quasi-categories is a weak equivalence in this Bousfield localisation if and only if it is a categorical $n$-equivalence (Theorem \ref{wethm}). (Note that the first of these two results is new to this paper, whereas the second was stated without proof in \cite[\S26.6]{joyalnotes}.)

We refer the reader to Appendix \ref{secbousfield} for the necessary background on Bousfield localisations, and to \cite{MR1935979}, \cite[\S1]{MR2342834}, and \cite[Chapter 1]{MR2522659} for a more than sufficient background in the theory of quasi-categories. In particular, recall that there is a (left proper and combinatorial) cartesian model structure due to Joyal on the category of simplicial sets whose cofibrations are the monomorphisms and whose fibrant objects are the quasi-categories \cite[Theorem 6.12]{joyalbarcelona}. We call this model structure the \emph{model structure for quasi-categories}; the weak equivalences and fibrations between fibrant objects of this model structure will be called \emph{weak categorical equivalences} and \emph{isofibrations} respectively. (Note that, following \cite{MR2764043}, we will sometimes denote the category of simplicial sets equipped with the model structures  for Kan complexes and quasi-categories by $\sSet_\mathrm{K}$ and $\sSet_\mathrm{J}$ respectively.)

To begin, let us recall the definition of the hom-spaces of a quasi-category. For each pair of objects (i.e.\ $0$-simplices) $x,y$ of a quasi-category $X$, their \emph{hom-space} $\Hom_X(x,y)$ is the Kan complex defined by the pullback
 \begin{equation} \label{hompb}
 \cd{
 \Hom_X(x,y) \ar[r] \ar[d] \fatpullbackcorner & X^{\Delta^1} \ar[d]^-{(X^{\delta^1},X^{\delta^0})} \\
 \Delta^0 \ar[r]_-{(x,y)} & X \times X
 }
 \end{equation}
 in the category of simplicial sets.  By \cite[Proposition 4.5]{MR2764043}, this hom-space construction defines the right adjoint of a Quillen adjunction
\begin{equation} \label{susphomadj}
 \xymatrix{
\partial\Delta^1\backslash\sSet_\mathrm{J} \ar@<-1.5ex>[rr]^-{\hdash}_-{\Hom} && \ar@<-1.5ex>[ll]_-{\Sigma} \sSet_\mathrm{K}
}
\end{equation}
 between the category of bipointed simplicial sets (note that $\partial\Delta^1 \cong \Delta^0 + \Delta^0$) equipped with the model structure induced by the model structure for quasi-categories and the category of simplicial sets equipped with the model structure for Kan complexes, whose left adjoint sends a simplicial set $U$ to its \emph{(two-point) suspension} $\Sigma U$, defined by the pushout  \begin{equation} \label{susppo}
 \cd{
U \times \partial\Delta^1 \ar[r]^-{\mathrm{pr}_2} \ar[d] & \partial\Delta^1 \ar[d]^-{(\bot,\top)} \\
U \times \Delta^1 \ar[r] & \fatpushoutcorner \Sigma U
 }
 \end{equation}
 in the category of simplicial sets; note that the simplicial set $\Sigma U$ has precisely two $0$-simplices, which we denote by $\bot$ and $\top$, as in the diagram above. 
 
Next, recall that one can assign to each category $A$ a quasi-category $NA$ via the nerve functor $N \colon \Cat \lra \sSet$, which defines the fully faithful right adjoint of an adjunction
\begin{equation} \label{nerveadj}
\xymatrix{
\Cat \ar@<-1.5ex>[rr]^-{\hdash}_-{N} && \ar@<-1.5ex>[ll]_-{\tau_1} \sSet
}
\end{equation}
whose left adjoint sends a simplicial set $X$ to its fundamental category $\tau_1X$ (see \cite[\S II.4]{MR0210125}). If $X$ is a quasi-category, then its fundamental category $\tau_1X$ is isomorphic to its \emph{homotopy category} $\ho X$, which was first constructed by Boardman and Vogt \cite[\S IV.2]{MR0420609} (for a detailed proof, see \cite[Chapter 1]{joyalbarcelona}).
The homotopy category $\ho X$ of a quasi-category $X$ has the same set of objects as $X$, and its hom-sets $(\ho X)(x,y) \cong  \pi_0(\Hom_X(x,y))$ are isomorphic to the sets of connected components of the hom-spaces of $X$; thus the unit morphism $X \lra N(\ho X)$ of the adjunction (\ref{nerveadj}) is a bijection on objects, and is given on hom-spaces by the unit morphism $\Hom_X(x,y) \lra \mathrm{disc}(\pi_0(\Hom_X(x,y)))$ of the adjunction $\pi_0 \dashv \mathrm{disc}$ (\ref{0typeadj}). A morphism (i.e.\ a $1$-simplex) in a quasi-category $X$ is said to be an \emph{isomorphism} if it is sent by the unit morphism $X \lra N(\ho X)$ to an isomorphism in $\ho X$.

 A morphism of quasi-categories $f \colon X \lra Y$ is said to be \emph{essentially surjective on objects} if the induced functor between homotopy categories $\ho(f) \colon \ho X \lra \ho Y$ is essentially surjective on objects. 
 A fundamental theorem of quasi-category theory  states that a morphism of quasi-categories $f \colon X \lra Y$ is an equivalence of quasi-categories (i.e.\ a weak categorical equivalence between quasi-categories) if and only if it is essentially surjective on objects and a homotopy equivalence on hom-spaces, that is, for each pair of objects $x,y \in X$, the induced morphism of hom-spaces $f \colon \Hom_X(x,y) \lra \Hom_Y(fx,fy)$ is a homotopy equivalence of Kan complexes. 

We now recall the definition of $n$-truncated quasi-categories from \cite[\S26]{joyalnotes}. 

\begin{definition} \label{deftrunc}
A quasi-category $X$ is said to be \emph{$n$-truncated} if, for each pair of objects $x,y \in X$, the hom-space $\Hom_X(x,y)$ is an $(n-1)$-type.
\end{definition}

\begin{remark}
In \cite[Proposition 2.3.4.18]{MR2522659}, Lurie proved that a quasi-category is $n$-truncated if and only if it is equivalent to an \emph{$n$-category} in the sense of \cite[Definition 2.3.4.1]{MR2522659}. We will not use this result in the present paper.
\end{remark}

Before proceeding with the study of the homotopy theory of $n$-truncated quasi-categories, let us examine the low dimensional cases of this definition. 
By definition, a quasi-category is $1$-truncated if and only if its hom-spaces are $0$-types. For example, the nerve $NA$ of a category $A$ is a $1$-truncated quasi-category, since its hom-spaces are the discrete simplicial sets $\Hom_{NA}(a,b) \cong \mathrm{disc}(A(a,b))$ given by the hom-sets of $A$, and since every discrete simplicial set is a $0$-type.

\begin{proposition}[{\cite[\S26.1]{joyalnotes}}] \label{catprop}
A quasi-category $X$ is $1$-truncated if and only if the unit morphism $X \lra N(\ho X)$ is an equivalence of quasi-categories. In particular, the nerve of a category is a $1$-truncated quasi-category. 
\end{proposition}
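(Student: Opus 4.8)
The plan is to read off both parts of the statement from the fundamental theorem of quasi-category theory recalled just above, which says that a morphism between quasi-categories is an equivalence of quasi-categories precisely when it is essentially surjective on objects and a homotopy equivalence on each hom-space. To apply this to the unit morphism $\eta \colon X \lra N(\ho X)$, the first point to check is that $N(\ho X)$ is itself a quasi-category — the nerve of any category is a quasi-category, as its inner horns have (unique) fillers — with discrete hom-spaces $\Hom_{N(\ho X)}(x,y) \cong \mathrm{disc}\bigl((\ho X)(x,y)\bigr)$, so that $\eta$ really is a morphism of quasi-categories of the sort the fundamental theorem governs.

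Next I would assemble the two properties of $\eta$ recalled in the text preceding the statement: first, that $\eta$ is a bijection on objects, hence in particular essentially surjective on objects; and second, that, under the identification $(\ho X)(x,y) \cong \pi_0\bigl(\Hom_X(x,y)\bigr)$, the map induced by $\eta$ on the hom-space from $x$ to $y$ is exactly the unit morphism $\Hom_X(x,y) \lra \mathrm{disc}\bigl(\pi_0(\Hom_X(x,y))\bigr)$ of the adjunction $\pi_0 \dashv \mathrm{disc}$ of~(\ref{0typeadj}).

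Combining these, $\eta$ is an equivalence of quasi-categories if and only if, for every pair of objects $x,y \in X$, the unit $\Hom_X(x,y) \lra \mathrm{disc}\bigl(\pi_0(\Hom_X(x,y))\bigr)$ is a homotopy equivalence; and by Example~\ref{ex0type} this holds for a given pair exactly when the Kan complex $\Hom_X(x,y)$ is a $0$-type. Hence $\eta$ is an equivalence of quasi-categories if and only if every hom-space of $X$ is a $0$-type, which is precisely the definition of $X$ being $1$-truncated; this proves the first assertion. For the ``in particular'' clause, I would observe that when $X = NA$ for a category $A$, full faithfulness of the nerve functor makes the counit $\tau_1 NA \lra A$ an isomorphism, so $\ho(NA) \cong \tau_1 NA \cong A$ and the unit $NA \lra N(\ho NA)$ is an isomorphism, whence $NA$ is $1$-truncated by the first part (this also recovers the direct observation made above that the hom-spaces of $NA$ are discrete, hence $0$-types).

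I do not anticipate any real obstacle: the argument is essentially a repackaging of facts already established, and the only step demanding a moment's care is the identification of the hom-space map of $\eta$ with the $\pi_0 \dashv \mathrm{disc}$ unit, which is part of the standard construction of the homotopy category of a quasi-category recalled above.
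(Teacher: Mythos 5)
Your proposal is correct and follows essentially the same route as the paper: reduce to the fundamental theorem of quasi-categories, use that the unit is bijective on objects, identify its action on hom-spaces with the unit of $\pi_0 \dashv \mathrm{disc}$, and conclude via Example~\ref{ex0type}. The only cosmetic difference is that you derive the ``in particular'' clause from full faithfulness of $N$ (so that the unit $NA \lra N(\ho NA)$ is an isomorphism), whereas the paper simply observes directly that the hom-spaces of $NA$ are discrete; both are fine.
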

\begin{proof}
Let $X$ be a quasi-category. By construction, the  unit morphism $X \lra N(\ho X)$ is bijective on objects, and therefore is an equivalence if and only if it is a homotopy equivalence on hom-spaces, that is, if and only if the unit morphism  $\Hom_X(x,y) \lra \mathrm{disc}(\pi_0(\Hom_X(x,y)))$ is a homotopy equivalence for each pair of objects $x,y \in X$. But this is so precisely when each hom-space $\Hom_X(x,y)$ is a $0$-type (see Example \ref{ex0type}), that is, precisely when $X$ is $1$-truncated.
\end{proof}

\begin{remark}
For any quasi-category $X$, the unit morphism $X\lra N(\ho X)$ is an isofibration. Hence a quasi-category $X$ is $1$-truncated if and only if the unit morphism $X \lra N(\ho X)$ is a trivial fibration.
\end{remark}

Recall that a category is a \emph{preorder} if each of its hom-sets has at most one element. A category is a preorder if and only if it is equivalent to a poset (partially ordered set): the quotient of a preorder by the congruence $x \sim y$ $\iff$ $x \leq y$ \& $y \leq x$ defines an equivalent poset, which we call its \emph{poset quotient}; conversely, any category equivalent to a preorder is evidently a preorder, and a poset is in particular a preorder. 

\begin{proposition}[{\cite[\S26.2]{joyalnotes}}] \label{posetprop}
A quasi-category is $0$-truncated if and only if it is $1$-truncated and its homotopy category is equivalent to a poset. In particular, the nerve of a preorder is a $0$-truncated quasi-category.
\end{proposition}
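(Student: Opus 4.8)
The plan is to unwind both conditions appearing in the statement in terms of the hom-spaces of $X$, and then to apply the characterisation of $0$-types from Example \ref{ex0type} together with the description of the hom-sets of $\ho X$ as the sets of connected components of the hom-spaces of $X$. Recall the standing translation (established in the discussion preceding the proposition) that a category is a preorder if and only if it is equivalent to a poset, so that ``$\ho X$ is equivalent to a poset'' may be replaced throughout by ``$\ho X$ is a preorder'', i.e.\ every hom-set of $\ho X$ has at most one element.

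For the forward implication, suppose $X$ is $0$-truncated, so that every hom-space $\Hom_X(x,y)$ is a $(-1)$-type, i.e.\ empty or contractible. Since every $(-1)$-type is in particular a $0$-type, $X$ is $1$-truncated. Moreover, the hom-set $(\ho X)(x,y) \cong \pi_0(\Hom_X(x,y))$ is empty when $\Hom_X(x,y)$ is empty and a singleton when $\Hom_X(x,y)$ is contractible; in either case it has at most one element, so $\ho X$ is a preorder, and hence equivalent to a poset (namely its poset quotient).

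Conversely, suppose $X$ is $1$-truncated and $\ho X$ is a preorder, so that each hom-set $(\ho X)(x,y) \cong \pi_0(\Hom_X(x,y))$ has at most one element. Fix objects $x,y \in X$ and write $K = \Hom_X(x,y)$. Since $X$ is $1$-truncated, $K$ is a $0$-type, so by Example \ref{ex0type} the unit morphism $K \lra \mathrm{disc}(\pi_0 K)$ is a homotopy equivalence. If $\pi_0 K = \emptyset$ then there exists a morphism $K \lra \emptyset$, which forces $K = \emptyset$, a $(-1)$-type; if $\pi_0 K$ is a singleton then $K$ is homotopy equivalent to $\Delta^0$, i.e.\ contractible, again a $(-1)$-type. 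Hence every hom-space of $X$ is a $(-1)$-type, so $X$ is $0$-truncated. Finally, for the last assertion: if $A$ is a preorder then $NA$ is $1$-truncated by Proposition \ref{catprop}, and $\ho(NA) \cong \tau_1(NA) \cong A$ since $N$ is fully faithful; as $A$ is (equivalent to) a poset, the first part of the proposition gives that $NA$ is $0$-truncated. (Alternatively, one notes directly that $\Hom_{NA}(a,b) \cong \mathrm{disc}(A(a,b))$ is either empty or terminal, hence a $(-1)$-type.)

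The argument is essentially a matter of bookkeeping, and the only point demanding any care — the ``main obstacle'', such as it is — is the empty case in the converse direction: one must use that a Kan complex which is homotopy equivalent to the empty simplicial set is itself empty, which holds because the existence of a map into $\emptyset$ forces the source to be empty.
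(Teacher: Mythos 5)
Your proof is correct and follows essentially the same route as the paper's, which condenses the whole argument into the single observation that a Kan complex is a $(-1)$-type if and only if it is a $0$-type whose set of connected components has at most one element, combined with the identification $(\ho X)(x,y) \cong \pi_0(\Hom_X(x,y))$. Your version merely spells out the two directions (and the empty/contractible case split) in more detail.
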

\begin{proof}
A Kan complex is a $(-1)$-type if and only if it is a $0$-type and its set of connected components has at most one element. Hence a quasi-category $X$ is $0$-truncated if and only if it is $1$-truncated and its homotopy category is a preorder, that is, equivalent to a poset.
\end{proof}

A quasi-category is $(-1)$-truncated if and only if it is empty or a contractible Kan complex, that is, if and only if it is a $(-1)$-type: if $X$ is a nonempty $(-1)$-truncated quasi-category, then its hom-spaces are contractible, and so the unique morphism $X \lra \Delta^0$ is surjective on objects and a homotopy equivalence on hom-spaces, and is thus an equivalence of quasi-categories, and hence a trivial fibration. Similarly, one could define a quasi-category to be $(-2)$-truncated if it is a $(-2)$-type, i.e.\ a contractible Kan complex.

We now proceed towards the main goal of this section, which is to prove that the $n$-truncated quasi-categories are the fibrant objects of the Bousfield localisation of the model structure for quasi-categories with respect to the boundary inclusion $\partial\Delta^{n+2} \lra \Delta^{n+2}$.   Our first step will be to show that $n$-truncated quasi-categories can be characterised in terms of a lifting property. To this end,  it will be convenient to use an alternative model for the hom-spaces of a quasi-category.

Recall that a morphism of simplicial sets $f \colon X \lra Y$ is said to be a \emph{right fibration} if it has the right lifting property with respect to the horn inclusion $\Lambda^m_k \lra \Delta^m$ for every $m \geq 1$ and $0 < k \leq m$ (see \cite[\S2]{MR1935979} or \cite[Chapter 2]{MR2522659}). For each object $x$ of a quasi-category $X$, one obtains by the join and slice constructions of \cite[\S3]{MR1935979} a right fibration $X/x \lra X$ whose domain is the \emph{slice quasi-category} $X/x$ (see \cite[\S 1.2.9]{MR2522659}). The slice quasi-category construction defines the right adjoint of an adjunction
\begin{equation*}
 \xymatrix{
\Delta^0\backslash\sSet \ar@<-1.5ex>[rr]^-{\hdash}_-{\mathrm{slice}} && \ar@<-1.5ex>[ll]_-{-\star \Delta^0} \sSet
}
\end{equation*}
whose left adjoint sends a simplicial set $U$ to the \emph{right cone} of $U$, i.e.\ the join $U \star \Delta^0$ with base point $\Delta^0 \cong \emptyset \star \Delta^0 \lra U \star \Delta^0$. Thus, for each $k \geq 0$, a $k$-simplex of the slice quasi-category $X/x$ is given by a $(k+1)$-simplex of $X$ whose final vertex is $x$; the right fibration $X/x \lra X$ sends a $k$-simplex of $X/x$ to the face opposite the last vertex of the corresponding $(k+1)$-simplex of $X$. (See \cite[\S3]{MR1935979} and \cite[\S\S 1.2.8--9]{MR2522659} for further details.)

For each pair of objects $x,y$ of a quasi-category $X$, the \emph{right hom-space} $\Hom_X^R(x,y)$ is defined by the pullback
\begin{equation*}
\cd{
\Hom_X^R(x,y) \fatpullbackcorner \ar[r] \ar[d] & X/y \ar[d] \\
\Delta^0 \ar[r]_-x & X
}
\end{equation*}
in the category of simplicial sets \cite[\S1.2.2]{MR2522659}. Since the projection $X/y \lra X$ is a right fibration, it follows that the right hom-space $\Hom_X^R(x,y)$ is a Kan complex (see \cite[Proposition 1.2.2.3]{MR2522659}). A $k$-simplex of $\Hom_X^R(x,y)$ is given by a $(k+1)$-simplex  of $X$ whose last vertex is $y$ and whose face opposite the last vertex is the degenerate $k$-simplex on $x$. 

Importantly, for each pair of objects $x,y$ of a quasi-category $X$, there is a homotopy equivalence $\Hom_X^R(x,y) \simeq \Hom_X(x,y)$ between the right hom-space and the hom-space  (see \cite[Corollary 4.2.1.8]{MR2522659}). Hence a quasi-category is $n$-truncated if and only if each of its right hom-spaces is an $(n-1)$-type.

The characterisation of $n$-truncated quasi-categories in terms of a lifting property depends on the following lemma. Recall from Definition \ref{acycdef} that a simplicial set is said to be $n$-acyclic if it has the right lifting property with respect to the boundary inclusion $\partial\Delta^m \lra \Delta^m$ for every $m > n$.

\begin{lemma} \label{rightfibre}
Let $f \colon X \lra Y$ be a  right fibration of simplicial sets. Then the following properties are equivalent.
\begin{enumerate}[leftmargin=*, font=\normalfont, label=(\roman*)]
\item  $f$
 has the right lifting property with respect to the boundary inclusion $\partial\Delta^m \lra \Delta^m$ for every $m > n$.
 \item For every $0$-simplex $y \in Y_0$, the fibre $f^{-1}(y)$ is an $(n-1)$-type.
 \item For every $0$-simplex $y \in Y_0$, the fibre $f^{-1}(y)$ is $n$-acyclic.
 \end{enumerate}
\end{lemma}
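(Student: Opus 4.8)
The plan is to prove the cycle of implications (i) $\Rightarrow$ (iii) $\Rightarrow$ (ii) $\Rightarrow$ (i); only the last of these uses that $f$ is a right fibration. For (i) $\Rightarrow$ (iii): the class of morphisms with respect to which $f$ has the right lifting property is stable under pullback, so pulling $f$ back along $y \colon \Delta^0 \lra Y$ shows that the fibre $f^{-1}(y) \lra \Delta^0$ inherits the right lifting property with respect to $\partial\Delta^m \lra \Delta^m$ for every $m > n$; since this morphism has codomain $\Delta^0$, that is exactly the assertion that the simplicial set $f^{-1}(y)$ is $n$-acyclic. For (iii) $\Leftrightarrow$ (ii): the fibres of a right fibration are Kan complexes, so this is immediate from Proposition \ref{kantype} applied with $n-1$ in place of $n$. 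It therefore remains only to prove (iii) $\Rightarrow$ (i).

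For that implication I would first reduce to the case $Y = \Delta^m$: in a lifting problem for $f$ against $\partial\Delta^m \lra \Delta^m$ with $m > n$, pull $f$ back along the bottom morphism $\Delta^m \lra Y$ to obtain a right fibration $p \colon E \lra \Delta^m$ all of whose fibres are $n$-acyclic, and observe that the lifting problem is solvable if and only if the induced partial section $\partial\Delta^m \lra E$ of $p$ extends to a global section of $p$ — equivalently, if and only if the restriction map $\mathrm{Fun}_{\Delta^m}(\Delta^m, E) \lra \mathrm{Fun}_{\Delta^m}(\partial\Delta^m, E)$ between the simplicial sets of sections of $p$ is surjective on $0$-simplices. (The case $m = 0$ forces $n = -1$ and is immediate, since then $E$ is a contractible, hence nonempty, Kan complex.) The key input is that the inclusions $\{m\} \lra \Lambda^m_m$ and $\Lambda^m_m \lra \Delta^m$ are right anodyne — being composites of pushouts of right horn inclusions — so that for any subcomplex $K \subseteq \Delta^m$ obtained from the final vertex $\{m\}$ by successively attaching faces that contain $m$, the restriction $\mathrm{Fun}_{\Delta^m}(K, E) \lra E_{\{m\}}$ is a trivial fibration onto the fibre over $\{m\}$, which by hypothesis is an $(n-1)$-type. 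Using the decomposition $\partial\Delta^m = \Lambda^m_m \cup \Delta^{\{0,\dots,m-1\}}$ (a pushout along $\partial\Delta^{\{0,\dots,m-1\}}$), an induction on $m$ then reduces the claim that $\mathrm{Fun}_{\Delta^m}(\partial\Delta^m, E) \lra E_{\{m\}}$ is a weak equivalence to the same claim one dimension lower; since $\mathrm{Fun}_{\Delta^m}(\Delta^m, E) \lra \mathrm{Fun}_{\Delta^m}(\partial\Delta^m, E)$ is a right fibration between Kan complexes, hence a Kan fibration, a two-out-of-three argument then shows it is a trivial fibration, in particular surjective on $0$-simplices.

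The main obstacle is the base case $m = n+1$ of this induction, where no such decomposition is available: one must show directly that any section of $p \colon E \lra \Delta^{n+1}$ over $\partial\Delta^{n+1} \cong S^n$ extends over $\Delta^{n+1}$. Here I would fill an inner horn of $\Delta^{n+1}$ using the quasi-category structure of $E$ (which is a right fibration over a quasi-category, hence a quasi-category; valid for $n \geq 1$, the cases $n = -1,0$ being handled by small direct arguments) to obtain a section of $p$ that agrees with the given one on all but the last facet, and then correct it on that facet: the two sections of $E$ restricted to that facet agree on its boundary, and their discrepancy is classified by an element of $\pi_n$ of an $(n-1)$-type fibre of $p$, hence vanishes, so they are homotopic rel boundary; transporting this homotopy along $p$ — using that the relevant inclusion into $\Delta^{n+1} \times \Delta^1$ is right anodyne — produces the desired extension. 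Making this obstruction-theoretic step precise, in particular identifying the discrepancy with a homotopy class in a fibre despite $p$ being merely a right fibration and $\Delta^{n+1}$ not being a Kan complex, is the delicate point.
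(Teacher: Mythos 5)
Your handling of (i) $\Rightarrow$ (iii) and (ii) $\Leftrightarrow$ (iii) is correct and coincides with the paper's: stability of right lifting properties under pullback, and Proposition \ref{kantype} applied to the fibres, which are Kan complexes. The entire content of the lemma therefore sits in (iii) $\Rightarrow$ (i), and there your argument is not complete. The paper disposes of this implication by citing \cite[Lemma 2.1.3.4]{MR2522659} (a right fibration with contractible fibres is a trivial fibration) together with the observation that Lurie's proof already proceeds one boundary inclusion at a time, so that for each fixed $k$ it yields the refined statement: if every fibre has the right lifting property with respect to $\partial\Delta^k \lra \Delta^k$, then so does the right fibration. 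You instead set out to reprove this from scratch, and your outline is reasonable as far as it goes: the reduction to $Y = \Delta^m$, the passage to simplicial sets of sections, the right-anodyne inclusions built from the final vertex, and the inductive step peeling the face $\Delta^{\{0,\dots,m-1\}}$ off $\partial\Delta^m = \Lambda^m_m \cup \Delta^{\{0,\dots,m-1\}}$ all work (the statement you should induct on is that $\mathrm{Fun}_{\Delta^m}(\partial\Delta^m,E) \lra E_{\{m\}}$ is a weak equivalence, and your two-out-of-three argument does propagate it from $m-1$ to $m$).

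The genuine gap is the base case $m = n+1$, which you correctly identify as unavoidable (the inductive reduction to dimension $n$ is false: for $n=0$ it would force every right fibration with nonempty-or-contractible fibres to be surjective on vertices). This base case is exactly where the hypothesis that the fibres are $(n-1)$-types must be used, and your treatment of it is an acknowledged sketch: the identification of the discrepancy between the horn-filled section and the given one over the last facet with a class in $\pi_n$ of a fibre, and the transport of the resulting homotopy rel boundary back to an actual extension over $\Delta^{n+1}$, are asserted but not carried out, and you flag this yourself as ``the delicate point.'' Since that step is the whole point of the lemma, the proposal as written does not constitute a proof. To close the gap you would either have to make the obstruction-theoretic argument precise (e.g.\ by comparing the fibres of the Kan fibrations $\mathrm{Fun}_{\Delta^{n+1}}(\Delta^{n+1},E) \lra E_{\{n+1\}}$ and $\mathrm{Fun}_{\Delta^{n+1}}(\partial\Delta^{n+1},E) \lra E_{\{n+1\}}$ and showing the relevant fibre of the restriction map is connected because the fibres of $p$ are $(n-1)$-types), or simply quote the argument of \cite[Lemma 2.1.3.4]{MR2522659} as the paper does, which moreover gives the sharper dimensionwise statement without any induction across dimensions.
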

\begin{proof}
Since the fibres of a right fibration are Kan complexes, the equivalence (ii) $\iff$ (iii) follows from Proposition \ref{kantype}. Furthermore, since any pullback of a morphism satisfying the lifting property (i) inherits this lifting property, we have the implication (i) $\implies$ (iii). 

It remains to prove the implication (iii) $\implies$ (i).
If $n=-1$, this implication is precisely \cite[Lemma 2.1.3.4]{MR2522659}, which states that a right fibration whose fibres are contractible is a trivial fibration. In fact, the proof of the cited result proves moreover that, for each $k \geq 0$, if the fibres of a right fibration each have the right lifting property with respect to the boundary inclusion $\partial\Delta^k \lra \Delta^k$, then  the right fibration also has the right lifting property with respect to that boundary inclusion. This proves the implication (iii) $\implies$ (i) for an arbitrary $n \geq -1$.
\end{proof}

By applying Lemma \ref{rightfibre} to the right fibrations of the form $X/x \lra X$,  we can characterise the $n$-truncated quasi-categories by the following lifting property. 

\begin{proposition}[{\cite[\S\S26.1--3]{joyalnotes}}] \label{truncisacyc}
A quasi-category is $n$-truncated if and only if it has the right lifting property with respect to the boundary inclusion $\partial\Delta^m \lra \Delta^m$ for every $m \geq  n+2$.
\end{proposition}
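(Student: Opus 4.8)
The plan is to derive this characterisation from Lemma~\ref{rightfibre}, applied to the right fibrations $X/y \lra X$, together with a short ``dimension-shift'' observation relating the lifting properties of a simplicial set $X$ to those of its slices. Recall from the discussion above that a quasi-category $X$ is $n$-truncated if and only if every right hom-space $\Hom_X^R(x,y)$ is an $(n-1)$-type, and that, for fixed $y \in X_0$, the right hom-spaces $\Hom_X^R(x,y)$ (for varying $x \in X_0$) are precisely the fibres of the right fibration $X/y \lra X$. Hence, by the equivalence (i)~$\iff$~(ii) of Lemma~\ref{rightfibre} applied to $X/y \lra X$, the quasi-category $X$ is $n$-truncated if and only if, for every $y \in X_0$, the morphism $X/y \lra X$ has the right lifting property with respect to the boundary inclusion $\partial\Delta^k \lra \Delta^k$ for every $k > n$.

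It therefore suffices to prove the following purely combinatorial statement, for which the quasi-category hypothesis on $X$ is not needed: \emph{for each integer $m \geq 0$ and each simplicial set $X$, the morphism $X/y \lra X$ has the right lifting property with respect to $\partial\Delta^m \lra \Delta^m$ for every $y \in X_0$ if and only if $X$ has the right lifting property with respect to $\partial\Delta^{m+1} \lra \Delta^{m+1}$.} Granting this and applying it with each $m \geq n+1$ (so that $m > n$ and $m+1 \geq n+2$), we may combine it with the previous paragraph to conclude that $X$ is $n$-truncated if and only if $X$ has the right lifting property with respect to $\partial\Delta^j \lra \Delta^j$ for every $j \geq n+2$, which is exactly the claim. (The quasi-category hypothesis enters only via Lemma~\ref{rightfibre}, which requires $X/y \lra X$ to be a right fibration.)

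The combinatorial statement will be proved by transposing lifting problems across the adjunction $(-\star\Delta^0) \dashv \mathrm{slice}$ that defines the slice construction. The relevant identities are $\Delta^m \star \Delta^0 \cong \Delta^{m+1}$ and $\partial\Delta^m \star \Delta^0 \cong \Lambda^{m+1}_{m+1}$, where in each case the adjoined $\Delta^0$ is the final vertex; consequently $\partial\Delta^{m+1} = \Lambda^{m+1}_{m+1} \cup d_{m+1}\Delta^{m+1}$, with $\Lambda^{m+1}_{m+1} \cap d_{m+1}\Delta^{m+1} = \partial(d_{m+1}\Delta^{m+1})$. Given a lifting problem $\sigma \colon \partial\Delta^{m+1} \lra X$, one sets $y$ to be the image under $\sigma$ of the final vertex; then $\sigma|_{\Lambda^{m+1}_{m+1}}$ transposes to a map $\partial\Delta^m \lra X/y$ and $\sigma|_{d_{m+1}\Delta^{m+1}}$ is precisely the map $\Delta^m \lra X$ over which a lift must be supplied, so that a solution to the resulting lifting problem for $X/y \lra X$ transposes back to a filler $\Delta^{m+1} \lra X$ of $\sigma$. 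The reverse passage glues the transpose of a lifting problem for $X/y \lra X$ to a lifting problem for $X$ along $\partial(d_{m+1}\Delta^{m+1})$. I expect the only real work to lie in the bookkeeping here: one must check that these two passages are mutually inverse, and in particular that any filler of $\sigma$ over $X$ sends the final vertex to $y$ (and hence transposes back into $X/y$) and that restriction along $\partial\Delta^m \hookrightarrow \Delta^m$ matches restriction along $\Lambda^{m+1}_{m+1} \hookrightarrow \Delta^{m+1}$ under the adjunction. This is routine but fiddly, and is the main obstacle.
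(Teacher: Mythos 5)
Your proposal is correct and follows essentially the same route as the paper: reduce to the fibres of the right fibrations $X/y \lra X$ via Lemma \ref{rightfibre}, then transpose the resulting lifting property across the join--slice adjunction, using that $(\partial\Delta^m \star \Delta^0) \cup (\Delta^m \star \emptyset) \lra \Delta^m \star \Delta^0$ is the boundary inclusion $\partial\Delta^{m+1} \lra \Delta^{m+1}$. The ``routine but fiddly'' combinatorial step you flag as the main obstacle is exactly what the paper outsources to Joyal's Lemmas 3.3 and 3.6 in \cite{MR1935979}, so nothing further needs to be verified by hand.
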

\begin{proof}
By the homotopy equivalences between the hom-spaces and the right hom-spaces of a quasi-category \cite[Corollary 4.2.1.8]{MR2522659}, a quasi-category $X$ is $n$-truncated if and only if the right hom-space $\Hom_X^R(x,y)$ is an $(n-1)$-type for each pair of objects $x,y \in X$. We thus have that a quasi-category $X$ is $n$-truncated if and only if every fibre of the right fibration $X/y \lra X$ is an $(n-1)$-type for every object $y\in X$. By Lemma \ref{rightfibre}, this is so if and only if the right fibration $X/y \lra X$ has the right lifting property with respect to the boundary inclusion $\partial\Delta^m \lra \Delta^m$ for every $m > n$ and every $y \in X$. By adjointness (see \cite[Lemma 3.6]{MR1935979}), this lifting property is satisfied if and only if  $X$ has the right lifting property with respect to the pushout-join $(\partial\Delta^m \star \Delta^0) \cup (\Delta^m \star \emptyset) \lra \Delta^m \star \Delta^0$ for every $m > n$. But  this pushout-join is none other than the boundary inclusion $\partial\Delta^{m+1} \lra \Delta^{m+1}$ \cite[Lemma 3.3]{MR1935979}. Hence we have shown that a quasi-category $X$ is $n$-truncated if and only if it has the right lifting property with respect to the boundary inclusion $\partial\Delta^{m+1} \lra \Delta^{m+1}$ for every $m > n$, as required.
\end{proof}

In the terminology of Definition \ref{acycdef}, Proposition \ref{truncisacyc} states that a quasi-category is $n$-truncated if and only if it is $(n+1)$-acyclic. Thus we may deduce that the class of $n$-truncated quasi-categories inherits the following properties from the class of $(n+1)$-acyclic simplicial sets.

\begin{corollary} \label{acyclift}
A quasi-category is $n$-truncated if and only if it has the right lifting property with respect to every $(n+1)$-bijective monomorphism of simplicial sets.
\end{corollary}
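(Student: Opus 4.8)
The plan is to deduce this corollary directly by composing the two biconditionals already at our disposal. First I would apply Proposition \ref{truncisacyc}, which asserts that a quasi-category is $n$-truncated if and only if it is $(n+1)$-acyclic, that is, unwinding Definition \ref{acycdef}, if and only if it has the right lifting property with respect to the boundary inclusion $\partial\Delta^m \lra \Delta^m$ for every $m > n+1$ (equivalently, every $m \geq n+2$). Then I would apply Lemma \ref{satcor} with its integer parameter taken to be $n+1$ --- this is legitimate since the standing hypothesis $n \geq -1$ forces $n+1 \geq 0 \geq -1$ --- which says that a simplicial set is $(n+1)$-acyclic if and only if it has the right lifting property with respect to every $(n+1)$-bijective monomorphism of simplicial sets. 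Chaining these two equivalences yields exactly the statement of the corollary, with nothing further to verify.

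There is essentially no obstacle here: the corollary is simply the conjunction of Proposition \ref{truncisacyc} (the lifting-property characterisation of $n$-truncated quasi-categories via the boundary inclusions $\partial\Delta^m \lra \Delta^m$, $m \geq n+2$) with Lemma \ref{satcor} (which, for an arbitrary simplicial set, upgrades that lifting property to the full class of $(n+1)$-bijective monomorphisms by the saturation argument recalled in its proof). The only point requiring a moment's care is the bookkeeping of the index shift --- ``$n$-truncated'' corresponds to ``$(n+1)$-acyclic'', and hence to ``$(n+1)$-bijective'' --- together with the observation that the monomorphisms in question range over all of $\sSet$, exactly as in the hypothesis of Lemma \ref{satcor}, so that no strengthening of that lemma is needed.
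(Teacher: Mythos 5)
Your proof is correct and is exactly the paper's argument: Corollary \ref{acyclift} is obtained by combining Proposition \ref{truncisacyc} (``$n$-truncated $\iff$ $(n+1)$-acyclic'') with Lemma \ref{satcor} applied with parameter $n+1$. The index bookkeeping and the check that $n+1 \geq -1$ are both handled correctly.
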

\begin{proof}
This is a consequence of Proposition \ref{truncisacyc} and Lemma \ref{satcor}.
\end{proof}

\begin{corollary} \label{cartpropqcat}
For every simplicial set $A$ and $n$-truncated quasi-category $X$,  the internal hom simplicial set $X^A$ is an $n$-truncated quasi-category.
\end{corollary}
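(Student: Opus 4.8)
The plan is to reduce the statement to two facts established above: that $n$-truncated quasi-categories are exactly the $(n+1)$-acyclic quasi-categories (Proposition~\ref{truncisacyc}), and that $(n+1)$-acyclic simplicial sets are closed under forming internal homs (Lemma~\ref{acyccart}). The only extra ingredient needed is the observation that $X^A$ is again a quasi-category.

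First, using Proposition~\ref{truncisacyc} I would rephrase the hypothesis: $X$ being an $n$-truncated quasi-category means precisely that $X$ is a quasi-category and $X$ is $(n+1)$-acyclic. To see that $X^A$ is a quasi-category, I would appeal to the cartesianness of the model structure for quasi-categories: since $X$ is fibrant (being a quasi-category) and every simplicial set is cofibrant (the cofibrations being the monomorphisms), the pullback-hom of the cofibration $\emptyset \lra A$ and the fibration $X \lra \Delta^0$ is the map $X^A \lra \Delta^0$, which is therefore a fibration; thus $X^A$ is fibrant, i.e.\ a quasi-category.

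Next, I would apply Lemma~\ref{acyccart} with the integer $n+1 \geq -1$ in place of its ``$n$'': since $X$ is $(n+1)$-acyclic, so is $X^A$. Combining this with the previous paragraph, $X^A$ is a quasi-category that is $(n+1)$-acyclic, and hence, by the converse direction of Proposition~\ref{truncisacyc}, it is $n$-truncated, as required.

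There is no substantive obstacle here: the statement is a formal consequence of Proposition~\ref{truncisacyc}, Lemma~\ref{acyccart}, and cartesianness. The only thing requiring a little care is the index bookkeeping — working throughout with $(n+1)$-acyclicity and invoking Lemma~\ref{acyccart} at $n+1$ rather than at $n$ — together with the standard observation that, in a cartesian model structure in which every object is cofibrant, exponentials of fibrant objects are fibrant.
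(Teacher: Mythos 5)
Your proof is correct and follows essentially the same route as the paper: both reduce the statement to Proposition~\ref{truncisacyc} (identifying $n$-truncated quasi-categories with $(n+1)$-acyclic quasi-categories) and Lemma~\ref{acyccart} applied at index $n+1$. The only cosmetic difference is that the paper cites Joyal's result directly for the fact that $X^A$ is a quasi-category, whereas you rederive it from the cartesianness of the Joyal model structure; both are fine.
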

\begin{proof}
We have by \cite[Corollary 2.19]{joyalbarcelona} that $X^A$ is a quasi-category. Hence by Proposition \ref{truncisacyc}, $X^A$ is an $n$-truncated quasi-category if and only if it is $(n+1)$-acyclic. The result then follows from Corollary \ref{acyccart}.
\end{proof}

\begin{remark}
The result of Corollary \ref{cartpropqcat} was proved by Lurie as \cite[Corollary 2.3.4.20]{MR2522659}. Our proof of this result is more direct and elementary than Lurie's proof, which uses the corresponding result for $n$-categories (in his sense) \cite[Proposition 2.3.4.8]{MR2522659} and the fact that any quasi-category is equivalent to a minimal quasi-category \cite[Proposition 2.3.3.8]{MR2522659}.
\end{remark}

\begin{corollary} \label{qunittrivfib}
A quasi-category $X$ is $n$-truncated if and only if the unit morphism $X \lra \cosk_{n+1}X$ is a trivial fibration.
\end{corollary}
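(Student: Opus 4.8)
The plan is to deduce this directly from the two characterisations already established: Proposition~\ref{truncisacyc}, which identifies the $n$-truncated quasi-categories with the $(n+1)$-acyclic quasi-categories, and Lemma~\ref{acyccosk}, which characterises $m$-acyclic simplicial sets as those whose unit morphism to the $m$-coskeleton is a trivial fibration. Chaining these two equivalences together with the index substitution $m = n+1$ gives the result.

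Concretely, I would proceed as follows. First, recall from Proposition~\ref{truncisacyc} (in the reformulation recorded immediately after its proof) that a quasi-category $X$ is $n$-truncated if and only if it is $(n+1)$-acyclic. Second, apply Lemma~\ref{acyccosk} with its parameter $n$ replaced by $n+1$ --- which is legitimate since $n \geq -1$ in this section forces $n+1 \geq 0 \geq -1$, so the hypothesis of that lemma is met --- to conclude that $X$ is $(n+1)$-acyclic if and only if the unit morphism $X \lra \cosk_{n+1}X$ is a trivial fibration. Combining the two biconditionals yields the statement.

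There is essentially no obstacle here: the corollary is a formal consequence of results already in hand, and the only point requiring a moment's care is the bookkeeping of the index shift from $n$-truncated to $(n+1)$-acyclic to $(n+1)$-coskeletal. I would therefore keep the proof to a single sentence citing Proposition~\ref{truncisacyc} and Lemma~\ref{acyccosk}.
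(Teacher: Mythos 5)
Your proposal is correct and coincides with the paper's own proof, which likewise deduces the corollary immediately from Proposition~\ref{truncisacyc} and Lemma~\ref{acyccosk}. The index bookkeeping you describe ($n$-truncated $\iff$ $(n+1)$-acyclic $\iff$ the unit to $\cosk_{n+1}$ is a trivial fibration) is exactly right.
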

\begin{proof}
This is a consequence of Proposition \ref{truncisacyc} and Lemma \ref{acyccosk}.
\end{proof}

In Propositions \ref{kantype} and \ref{truncisacyc}, $n$-types and $n$-truncated quasi-categories were both characterised by the same lifting property. Hence we may deduce the following corollary.

\begin{corollary}[{\cite[\S\S26.1--3]{joyalnotes}}] \label{kancor}
A Kan complex is an $n$-truncated quasi-category if and only if it is an $n$-type.
\end{corollary}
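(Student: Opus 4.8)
The plan is to chain together the two lifting-property characterisations already established, using the elementary fact that every Kan complex is a quasi-category. First I would observe that, since a Kan complex has fillers for all horns and in particular for all inner horns, a Kan complex is automatically a quasi-category; hence for a Kan complex $X$ the statement ``$X$ is an $n$-truncated quasi-category'' carries no content beyond ``$X$ is $n$-truncated''.

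Next I would apply Proposition \ref{truncisacyc} to the quasi-category $X$: it is $n$-truncated precisely when it has the right lifting property with respect to the boundary inclusion $\partial\Delta^m \lra \Delta^m$ for every $m \geq n+2$. On the other hand, Proposition \ref{kantype} says that the Kan complex $X$ is an $n$-type precisely when it has this very same lifting property. Comparing the two characterisations yields the desired equivalence. There is no real obstacle here — the argument is essentially a two-line deduction — and the only point to check is that both Propositions \ref{kantype} and \ref{truncisacyc} are genuinely available in the relevant range: since $n \geq -1$ we have $m \geq n+2 \geq 1$, so both results apply without any boundary issue.
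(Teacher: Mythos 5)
Your proposal is correct and is essentially identical to the paper's own proof: both note that a Kan complex is a quasi-category and then match the lifting property of Proposition \ref{truncisacyc} against that of Proposition \ref{kantype}, which are literally the same condition (right lifting against $\partial\Delta^m \lra \Delta^m$ for all $m \geq n+2$). No issues.
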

\begin{proof}
By definition, every Kan complex is a quasi-category. Hence by Proposition \ref{truncisacyc}, a Kan complex is an $n$-truncated quasi-category if and only if it is $(n+1)$-acyclic, which is so, by Proposition \ref{kantype}, precisely when it is an $n$-type.
\end{proof}

Next, we deduce from Proposition \ref{truncisacyc} a further characterisation of $n$-truncated quasi-categories  as the quasi-categories that are local (see (\ref{precomploc})) with respect to the boundary inclusion $\partial\Delta^{n+2} \lra \Delta^{n+2}$ in the model structure $\sSet_\mathrm{J}$ for quasi-categories. As explained in Appendix \ref{secbousfield}, this will require a model for the derived hom-spaces of the model category $\sSet_\mathrm{J}$, which we will obtain by Lemma \ref{recoghom} from the  Quillen adjunction (\ref{joyalresadj}) below. 

Let $\mathbf{qCat}$ and $\mathbf{Kan}$ denote the full subcategories of $\sSet$ consisting of the quasi-categories and the Kan complexes respectively. By \cite[Theorem 4.19]{joyalbarcelona}, the full inclusion $\mathbf{Kan} \lra \mathbf{qCat}$ has a right adjoint $J \colon \mathbf{qCat} \lra \mathbf{Kan}$, which sends a quasi-category $X$ to its maximal sub Kan complex $J(X)$. By \cite[Lemma 4.18]{joyalbarcelona}, a simplex of $X$ belongs to the simplicial subset $J(X)$ if and only if each of its 1-simplices is an isomorphism in $X$. 
Note that, by \cite[Proposition 4.27]{joyalbarcelona}, the functor $J$ sends isofibrations to Kan fibrations.

Let $X$ be a quasi-category.  By \cite[Corollary 5.11]{joyalbarcelona}, there is an adjunction
\begin{equation} \label{joyalresadj}
\xymatrix{
\sSet_\mathrm{J}^\mathrm{op} \ar@<-1.5ex>[rr]^-{\hdash}_-{J(X^-)} && \sSet_\mathrm{K} \ar@<-1.5ex>[ll]_-{X^{(-)}}
}
\end{equation}
whose right adjoint sends a simplicial set $A$ to the Kan complex $J(X^A)$, and whose left adjoint sends a simplicial set $U$ to the  full sub-quasi-category $X^{(U)}$ of $X^U$ consisting of the morphisms of simplicial sets $U \lra X$ which factor through $J(X)$, i.e.\ which send each $1$-simplex of $U$ to an isomorphism in $X$. Moreover, by \cite[Theorems 5.7, 5.10]{joyalbarcelona}, this adjunction is a Quillen adjunction between (the opposite of) the model structure $\sSet_\mathrm{J}$ for quasi-categories and the model structure $\sSet_\mathrm{K}$ for Kan complexes as indicated. 

Hence, for each simplicial set $A$ and quasi-category $X$, Lemma \ref{recoghom} applied to the Quillen adjunction (\ref{joyalresadj}) implies that the Kan complex $J(X^A)$ is a model for the derived hom-space $\underline{\Ho\sSet_\mathrm{J}}(A,X)$ from $A$ to $X$ in the model structure for quasi-categories. We may therefore deduce the following lemma.

\begin{lemma} \label{qcatloclem}
A quasi-category $X$ is local with respect to a morphism $f \colon A \lra B$ in the model structure for quasi-categories if and only if the morphism $J(X^f) \colon J(X^B) \lra J(X^A)$ is a homotopy equivalence of Kan complexes.
\end{lemma}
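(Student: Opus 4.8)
The plan is to unwind the definition of $f$-locality and then appeal to the identification of derived hom-spaces recorded in the paragraph immediately preceding the lemma. Recall from Appendix \ref{secbousfield} (see (\ref{precomploc})) that a fibrant object $X$ of $\sSet_\mathrm{J}$ is local with respect to $f \colon A \lra B$ precisely when the morphism $\underline{\Ho\sSet_\mathrm{J}}(B,X) \lra \underline{\Ho\sSet_\mathrm{J}}(A,X)$ of derived hom-spaces induced by precomposition with $f$ is a weak homotopy equivalence. So the task is to show that this morphism of derived hom-spaces is modelled by $J(X^f)$, and that for a morphism of Kan complexes being a weak homotopy equivalence coincides with being a homotopy equivalence.

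First I would invoke Lemma \ref{recoghom}, applied to the Quillen adjunction (\ref{joyalresadj}) exactly as in the discussion above, but now noting that it produces a model for the functor $A \mapsto \underline{\Ho\sSet_\mathrm{J}}(A,X)$ which is natural in $A$, namely $A \mapsto J(X^A)$; in particular, the morphism of derived hom-spaces induced by precomposition with $f$ is modelled by $J(X^f) \colon J(X^B) \lra J(X^A)$. Then, since $J$ takes values in Kan complexes, both $J(X^A)$ and $J(X^B)$ are Kan complexes, and hence the morphism $J(X^f)$ between them is a weak homotopy equivalence if and only if it is a homotopy equivalence of Kan complexes. Combining these two observations with the definition of locality recalled above yields the stated equivalence.

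The only point I expect to require care is the naturality in $A$ of the comparison $J(X^A) \simeq \underline{\Ho\sSet_\mathrm{J}}(A,X)$: one must check that $J(X^f)$ genuinely models the precomposition map $f^*$ on derived hom-spaces, and not merely that its source and target independently have the correct homotopy type. This naturality should follow directly from the functoriality of the adjunction (\ref{joyalresadj}) in the $\sSet_\mathrm{J}^\mathrm{op}$-variable together with the construction of the comparison maps furnished by the proof of Lemma \ref{recoghom}; once it is in hand, the remainder of the argument is a one-line deduction.
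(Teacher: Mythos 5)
Your proposal is correct and follows essentially the same route as the paper: unwind the definition of locality, apply Lemma \ref{recoghom} to the Quillen adjunction (\ref{joyalresadj}) to identify $\underline{\Ho\sSet_\mathrm{J}}(-,X)$ naturally with the derived right adjoint $J(X^-)$, and conclude using the cofibrancy of all objects of $\sSet_\mathrm{J}$. The naturality point you flag is indeed the crux, and it is exactly what Lemma \ref{recoghom} supplies, since that lemma produces an $\mathscr{H}$-natural isomorphism of functors rather than merely an objectwise identification.
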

\begin{proof}
By definition (see Appendix \ref{secbousfield}), a quasi-category $X$ is local with respect to a morphism $f \colon A \lra B$ in the model category $\sSet_\mathrm{J}$ if and only if this morphism is sent to an isomorphism by the functor $$\underline{\Ho\sSet_\mathrm{J}}(-,X) : \Ho\sSet_\mathrm{J}^\mathrm{op} \lra \mathscr{H}.$$ Since $X^{(\Delta^0)} \cong X$, Lemma  \ref{recoghom} implies that this functor is naturally isomorphic to the derived right adjoint of the Quillen adjunction (\ref{joyalresadj}). Therefore, since every object of $\sSet_\mathrm{J}$ is cofibrant, a morphism of simplicial sets is sent to an isomorphism by the functor $\underline{\Ho\sSet_\mathrm{J}}(-,X)$ if and only if it is sent to a homotopy equivalence of Kan complexes by the right Quillen functor $J(X^-) \colon \sSet_\mathrm{J}^\mathrm{op} \lra \sSet_\mathrm{K}$,  as required.
\end{proof}

\begin{remark} \label{howkanloc}
A Kan complex is local with respect to a given morphism in the model structure for Kan complexes if and only if it is local with respect to that morphism in the model structure for quasi-categories. This can be seen as a consequence either of the fact that the model structure for Kan complexes is a Bousfield localisation of the model structure for quasi-categories (cf.\ \cite[Lemma A.4]{MR3350089}), or of the standard result that for any simplicial set $A$ and Kan complex $X$, the Kan complex $X^A$ is a model for the derived hom-space from $A$ to $X$ in the model category $\sSet_\mathrm{K}$ (see \cite[Example 17.1.4]{MR1944041}), which coincides with our model for the derived hom-space from $A$ to $X$ in the model category $\sSet_\mathrm{J}$.
\end{remark}

\begin{remark}
An alternative model for the derived hom-spaces of the model category $\sSet_\mathrm{J}$  involves the following adjunction (which we will meet again in \S\ref{secquillen}). Let $k \colon \Delta \lra \sSet$ denote the functor that sends the ordered set $[m]$ to the nerve of its groupoid reflection, i.e.\ the nerve of the contractible groupoid with the set of objects $\{0,\ldots,m\}$. 
 This functor induces an adjunction
\begin{equation} \label{joykadj}
 \xymatrix{
\sSet_\mathrm{J} \ar@<-1.5ex>[rr]^-{\hdash}_-{k^!} && \ar@<-1.5ex>[ll]_-{k_!} \sSet_\mathrm{K}
}
\end{equation}
whose left adjoint is the left Kan extension of $k \colon \Delta \lra \sSet$ along the Yoneda embedding $\Delta \lra \sSet$. By \cite[Theorem 6.22]{joyalbarcelona}, this adjunction is a Quillen adjunction between the model structures for quasi-categories and Kan complexes as indicated.  Note that, since $k([0]) = \Delta^0$, the right adjoint functor $k^!$ sends a quasi-category to a Kan complex with the same set of objects.

One can show by another application of Lemma \ref{recoghom} that for each simplicial set $A$ and quasi-category $X$, the Kan complex $k^!(X^A)$ is a model for the derived hom-space from $A$ to $X$ in the model category $\sSet_\mathrm{J}$, which is homotopy equivalent to the Kan complex $J(X^A)$ by 
\cite[Proposition 6.26]{joyalbarcelona}. For our purposes, either of these models $J(X^A)$ or $k^!(X^A)$ for the derived hom-space would suffice; but one must be chosen, and we have chosen the former. 
\end{remark}

Using Lemma \ref{qcatloclem}, we are now able to prove the following proposition.

\begin{proposition} \label{localchar}
A quasi-category is $n$-truncated if and only if it is local with respect to the boundary inclusion $\partial\Delta^{n+2} \lra \Delta^{n+2}$ in the model structure for quasi-categories. 
\end{proposition}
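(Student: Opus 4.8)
The plan is to combine Lemma \ref{qcatloclem} with Proposition \ref{truncisacyc}: writing $b_m \colon \partial\Delta^m \lra \Delta^m$ for the boundary inclusion, the former reduces the asserted equivalence to the statement that $X$ has the right lifting property with respect to $b_m$ for all $m \geq n+2$ if and only if $J(X^{b_{n+2}})$ is a homotopy equivalence of Kan complexes. Two preliminary remarks will be used repeatedly. Since each $b_m$ is a monomorphism and $X$ is fibrant in the cartesian model structure $\sSet_\mathrm{J}$, each $X^{b_m}$ is an isofibration between quasi-categories, so each $J(X^{b_m})$ is a Kan fibration. Moreover, $J$ sends a trivial fibration $p$ between quasi-categories to a trivial fibration: $J(p)$ is a Kan fibration whose fibre over any $0$-simplex of its codomain is, since $J$ preserves the pullback defining it and restricts to the identity on Kan complexes, the (contractible) fibre of $p$ over that $0$-simplex, so $J(p)$ is a Kan fibration with contractible fibres, hence a trivial fibration.

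For the ``only if'' direction, let $X$ be $n$-truncated. Then $X^{b_{n+2}}$ is a trivial fibration: by the standard adjunction it has the right lifting property against a monomorphism $i$ if and only if $X$ does against the pushout-product of $i$ and $b_{n+2}$, and this pushout-product is an $(n+1)$-bijective monomorphism (as $b_{n+2}$ is bijective in degrees $\leq n+1$), against which $X$ lifts by Corollary \ref{acyclift}. By the second remark above, $J(X^{b_{n+2}})$ is then a trivial fibration, in particular a homotopy equivalence, so $X$ is local with respect to $b_{n+2}$ by Lemma \ref{qcatloclem}.

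For the ``if'' direction, suppose $X$ is local with respect to $b_{n+2}$, so that $J(X^{b_{n+2}})$ is a homotopy equivalence by Lemma \ref{qcatloclem}; being a Kan fibration, it is then a trivial fibration. I will show by induction on $m \geq n+2$ that $J(X^{b_m})$ is a trivial fibration; the case $m = n+2$ is the base case just established, and once the induction is complete, each $J(X^{b_m})$ is surjective on $0$-simplices — which is precisely the statement that $X$ has the right lifting property with respect to $b_m$ — so $X$ is $n$-truncated by Proposition \ref{truncisacyc}. For the inductive step, choose $0 < j < m+1$ (possible since $m+1 \geq 2$) and use the pushout decomposition $\partial\Delta^{m+1} = \Lambda^{m+1}_j \cup_{\partial\Delta^m} \Delta^m$, in which $\Delta^m$ is the $j$th face. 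Cotensoring into $X$ turns this into a pullback, and applying $J$ (which preserves pullbacks) identifies $J(X^{b_{m+1}})$ with the canonical map from $J(X^{\Delta^{m+1}})$ to the pullback of the cospan $J(X^{\Lambda^{m+1}_j}) \to J(X^{\partial\Delta^m}) \leftarrow J(X^{\Delta^m})$, whose legs are restriction maps. The first restriction map $J(X^{\Delta^{m+1}}) \to J(X^{\Lambda^{m+1}_j})$ is a trivial fibration, since $\Lambda^{m+1}_j \lra \Delta^{m+1}$ is inner anodyne and $J$ preserves trivial fibrations; the second is $J(X^{b_m})$, a trivial fibration by the inductive hypothesis. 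In particular both legs are Kan fibrations whose fibres over every $0$-simplex are contractible, so the induced map on fibres over any matching pair of $0$-simplices is a homotopy equivalence between contractible Kan complexes; by the usual fibrewise criterion for homotopy pullbacks of Kan complexes, $J(X^{b_{m+1}})$ is therefore a homotopy equivalence, and being a Kan fibration it is a trivial fibration, completing the induction.

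The crux is this inductive step: since being local with respect to $b_{n+2}$ is a condition about a single dimension, whereas $n$-truncatedness involves all dimensions $\geq n+2$, one cannot expect the converse to be purely formal. The inner-horn decomposition of $\partial\Delta^{m+1}$ is what bridges the gap, trading each higher boundary inclusion for an inner anodyne extension (handled by the cartesian structure of $\sSet_\mathrm{J}$) together with a lower boundary inclusion (handled by induction), glued along a homotopy pullback.
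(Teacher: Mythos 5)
Your proof is correct and follows essentially the same route as the paper's: the forward direction uses the identical pushout-product argument via Corollary \ref{acyclift}, and the converse uses the same induction on $m$ with the same decomposition $\partial\Delta^{m+1} \cong \Lambda^{m+1}_j \cup_{\partial\Delta^m} \Delta^m$, the same two trivial fibrations (the inner-horn restriction and the inductive $J(X^{b_m})$, whose pullback is the relevant projection), differing only in that you conclude by a fibrewise homotopy-equivalence criterion where the paper invokes two-of-three, and in that you justify $J$ preserving trivial fibrations fibrewise where the paper uses the pullback square of \cite[Lemma 5.9]{joyalbarcelona}. (Only the phrase ``both legs'' is slightly loose, since the two maps you actually verify are not the two legs of the cospan, but the intended comparison over $J(X^{\Lambda^{m+1}_j})$ is sound.)
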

\begin{proof}
By Lemma \ref{qcatloclem}, it is required to prove that a quasi-category $X$ is $n$-truncated if and only if the Kan fibration 
\begin{equation} \label{localmap}
J(X^{b_{n+2}}) \colon J(X^{\Delta^{n+2}}) \lra J(X^{\partial\Delta^{n+2}})
\end{equation}
induced by the boundary inclusion $b_{n+2} \colon \partial\Delta^{n+2} \lra \Delta^{n+2}$ is a homotopy equivalence of Kan complexes, or equivalently a trivial fibration.

Let $X$ be an $n$-truncated quasi-category. 
We will prove that the morphism (\ref{localmap}) is a trivial fibration. Since $n \geq -1$, the boundary inclusion $\partial\Delta^{n+2} \lra \Delta^{n+2}$ is $0$-bijective, and so by \cite[Lemma 5.9]{joyalbarcelona} (see also \cite[Corollary 5.11]{joyalbarcelona}) the following square is a pullback square.
\begin{equation*}
\cd{
J(X^{\Delta^{n+2}}) \fatpullbackcorner \ar[d] \ar[r] & X^{\Delta^{n+2}} \ar[d] \\
J(X^{\partial \Delta^{n+2}}) \ar[r] & X^{\partial \Delta^{n+2}}
}
\end{equation*}
Hence it suffices to prove that the morphism $X^{\Delta^{n+2}} \lra X^{\partial\Delta^{n+2}}$ is a trivial fibration. By adjointness, this is so if and only if $X$ has the right lifting property with respect to the pushout-product of the boundary inclusion $\partial\Delta^m \lra \Delta^m$ with the $(n+1)$-bijective boundary inclusion $\partial\Delta^{n+2} \lra \Delta^{n+2}$ for every $m \geq 0$. But every such pushout-product is an $(n+1)$-bijective monomorphism, and so $X$ has the desired lifting property by Corollary \ref{acyclift}. Therefore the morphism (\ref{localmap}) is a trivial fibration.

Conversely, let $X$ be a quasi-category and suppose that the morphism (\ref{localmap}) is a trivial fibration. By Proposition \ref{truncisacyc}, it remains to prove that $X$ has the right lifting property with respect to the boundary inclusion $\partial\Delta^m \lra \Delta^m$ for every $m \geq n+2$. Since trivial fibrations are surjective on $0$-simplices, it suffices to prove that the morphism $J(X^{b_m}) \colon J(X^{\Delta^m}) \lra J(X^{\partial\Delta^m})$ is a trivial fibration for every $m \geq n+2$.

We prove by induction that the morphism $J(X^{b_m})$ is a trivial fibration for every $m \geq n+2$. The base case $m = n+2$ of the induction is precisely the assumption that the morphism (\ref{localmap}) is a trivial fibration. Now suppose $m > n+2$, 
and let $0 < i < m$ be an integer (which exists since $n \geq -1$). We then have a diagram of monomorphisms as on the left below,
\begin{equation*}
\cd{
\partial\Delta^{m-1} \ar[d]_-{b_{m-1}} \ar[r] & \Lambda^m_i \ar[d] \ar@/^1pc/[ddr]^-{h_m^i} \\
\Delta^{m-1} \ar@/_1pc/[drr]_-{\delta^i} \ar[r] & \partial \Delta^m \ar[dr]^-{b_m} \fatpushoutcorner \\ 
&& \Delta^m
}
\qquad
\qquad
\cd{
J(X^{\Delta^m}) \ar[dr]|-{J(X^{b_m})} \ar@/^1pc/[drr]^-{J(X^{\delta^i})} \ar@/_1pc/[ddr]_-{J(X^{h_m^i})} \\
& J(X^{\partial\Delta^m}) \fatpullbackcorner \ar[r] \ar[d] & J(X^{\Delta^{m-1}}) \ar[d]^-{J(X^{b_{m-1}})} \\
& J(X^{\Lambda^m_i}) \ar[r] & J(X^{\partial\Delta^{m-1}})
}
\end{equation*}
and hence a diagram of Kan fibrations as on the right above. In this latter diagram, the morphism $J(X^{b_{m-1}})$ is a trivial fibration by the induction hypothesis, and hence so is its pullback. Since the morphism $h_m^i$ is an inner horn inclusion, the morphism $X^{h_m^i}$ is a trivial fibration, and hence so is the morphism $J(X^{h_m^i})$. It then follows from the two-of-three property that the morphism $J(X^{b_m})$ is a trivial fibration. This completes the proof by induction.
\end{proof}

As a special case of this result, we recover the following well-known characterisation of $n$-types (cf.\ \cite[Proposition 1.5.1]{MR1944041}).

\begin{corollary} \label{kanloccor}
A Kan complex $X$ is an $n$-type if and only if it is local with respect to the boundary inclusion $\partial\Delta^{n+2} \lra \Delta^{n+2}$ in the model structure for Kan complexes.
\end{corollary}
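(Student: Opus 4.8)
The plan is to deduce this corollary by chaining together three results already established in the excerpt. First I would recall that, by Corollary \ref{kancor}, a Kan complex $X$ is an $n$-type if and only if it is an $n$-truncated quasi-category. Next, Proposition \ref{localchar} shows that a quasi-category is $n$-truncated if and only if it is local with respect to the boundary inclusion $\partial\Delta^{n+2} \lra \Delta^{n+2}$ in the model structure $\sSet_\mathrm{J}$ for quasi-categories. Finally, Remark \ref{howkanloc} records that a Kan complex is local with respect to a given morphism in the model structure $\sSet_\mathrm{K}$ for Kan complexes if and only if it is local with respect to that morphism in $\sSet_\mathrm{J}$. Concatenating these three equivalences yields the statement.

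Concretely, the argument runs: for a Kan complex $X$, we have that $X$ is an $n$-type $\iff$ $X$ is an $n$-truncated quasi-category (Corollary \ref{kancor}) $\iff$ $X$ is local with respect to $\partial\Delta^{n+2} \lra \Delta^{n+2}$ in $\sSet_\mathrm{J}$ (Proposition \ref{localchar}) $\iff$ $X$ is local with respect to $\partial\Delta^{n+2} \lra \Delta^{n+2}$ in $\sSet_\mathrm{K}$ (Remark \ref{howkanloc}). There is no real obstacle here; the only thing to be careful about is that Proposition \ref{localchar} is stated for quasi-categories, so one must invoke Corollary \ref{kancor} to pass between ``$n$-type'' and ``$n$-truncated quasi-category'' before applying it, and one must invoke Remark \ref{howkanloc} to transport the locality condition from $\sSet_\mathrm{J}$ to $\sSet_\mathrm{K}$ afterwards. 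Both of these bridging steps are immediate from the cited results, so the proof is essentially a one-line composition of previously proven equivalences.
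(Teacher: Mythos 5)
Your proof is correct and is essentially identical to the paper's own argument: both deduce the corollary by combining Corollary \ref{kancor}, Proposition \ref{localchar}, and Remark \ref{howkanloc} in exactly the way you describe.
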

\begin{proof}
By Remark \ref{howkanloc}, a Kan complex is local with respect to the boundary inclusion $\partial\Delta^{n+2} \lra \Delta^{n+2}$ in the model structure for Kan complexes if and only if it is local with respect to it in the model structure for quasi-categories. Hence the result follows from Proposition \ref{localchar} and Corollary \ref{kancor}.
\end{proof}

\begin{remark} \label{thelowestcase}
A Kan complex $X$ is local with respect to the boundary inclusion $\partial\Delta^0 = \emptyset \lra \Delta^0$ if and only if the unique morphism $X \cong X^{\Delta^0} \lra X^{\emptyset} = \Delta^0$ is a homotopy equivalence, that is, if and only if $X$ is contractible. Hence Corollary \ref{kanloccor} holds for all $n \geq -2$.
\end{remark}

We may now apply Smith's existence theorem (Theorem \ref{smiththm}) to deduce the existence of the Bousfield localisation of the model structure for quasi-categories whose fibrant objects are precisely the $n$-truncated quasi-categories. We break the statement of the following result into two parts:\ the first part was stated without proof in \cite[\S26.5]{joyalnotes}, and the second part is new to this paper.

\begin{theorem}[{\cite[\S26.5]{joyalnotes}}] \label{ntruncmodstr}
There exists a model structure on the category of simplicial sets whose cofibrations are the monomorphisms and whose fibrant objects are the $n$-truncated quasi-categories. This model structure is cartesian and left proper.
\end{theorem}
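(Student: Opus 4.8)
The plan is to obtain this model structure as a left Bousfield localisation of Joyal's model structure $\sSet_\mathrm{J}$ at the single boundary inclusion $b_{n+2} \colon \partial\Delta^{n+2} \lra \Delta^{n+2}$, using the appendix's Smith-type existence theorem (Theorem \ref{smiththm}). First I would recall that $\sSet_\mathrm{J}$ is a left proper, combinatorial model category with the monomorphisms as cofibrations; since the localising class is generated by a single cofibration between cofibrant objects, Smith's theorem applies verbatim and produces a left proper, combinatorial model structure $\sSet_\mathrm{J}^{(n)}$ on $\sSet$ with the same cofibrations (monomorphisms) whose fibrant objects are exactly the $b_{n+2}$-local quasi-categories. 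By Proposition \ref{localchar}, these are precisely the $n$-truncated quasi-categories, which gives the first sentence.

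For the second sentence, I would verify cartesianness and left properness separately. Left properness is immediate: a left Bousfield localisation of a left proper model category is left proper (this is part of the standard package recalled in the appendix). For cartesianness, the key point is that $\sSet_\mathrm{J}$ is already cartesian, so the Leibniz (pushout-product) of two monomorphisms is an acyclic cofibration in $\sSet_\mathrm{J}$ whenever one factor is; it remains to check the behaviour against the new, localised acyclic cofibrations. By the general criterion for when a Bousfield localisation of a cartesian model category is again cartesian, it suffices to show that for every monomorphism $A \lra B$ and every $n$-truncated (i.e.\ fibrant) quasi-category $X$, the map $X^B \lra X^A$ is a fibration in $\sSet_\mathrm{J}^{(n)}$ between fibrant objects. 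Here Corollary \ref{cartpropqcat} does the work: $X^B$ and $X^A$ are both $n$-truncated quasi-categories, hence both fibrant, and the map between them is an isofibration, hence a fibration in the localisation. Equivalently — and this is perhaps the cleanest route — one checks that the exponential $X^A$ of an $n$-truncated quasi-category by an arbitrary simplicial set is again $n$-truncated (Corollary \ref{cartpropqcat}), which is exactly the closure property characterising cartesian localisations; combined with the fact that the underlying localisation is of a cartesian model structure, this yields the cartesian axiom.

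The main obstacle, such as it is, is bookkeeping rather than mathematics: one must be careful that ``cartesian'' is being verified with respect to the localised weak equivalences and fibrations, not the Joyal ones, and the cleanest way to discharge this is to reduce it — via the pushout-product adjunction — to the statement that $n$-truncated quasi-categories are closed under cotensoring by arbitrary simplicial sets, which is Corollary \ref{cartpropqcat}. I expect the proof to consist of: (1) cite $\sSet_\mathrm{J}$ is left proper combinatorial with monic cofibrations; (2) invoke Theorem \ref{smiththm} to localise at $b_{n+2}$; (3) identify the fibrant objects via Proposition \ref{localchar}; (4) deduce left properness from the general theory; (5) deduce cartesianness from Corollary \ref{cartpropqcat} together with the cartesianness of $\sSet_\mathrm{J}$ and the relevant criterion in the appendix. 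No step should require a genuinely new argument.
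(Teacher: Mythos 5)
Your proposal is correct and follows essentially the same route as the paper: Smith's theorem (Theorem \ref{smiththm}) applied to the single boundary inclusion, identification of the fibrant objects via Proposition \ref{localchar}, left properness from the localisation package, and cartesianness from the appendix criterion (Proposition \ref{cartprop}) together with Corollary \ref{cartpropqcat}. No substantive differences.
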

\begin{theorem} \label{newthm}
The model structure of \textup{Theorem \ref{ntruncmodstr}} is the Bousfield localisation of Joyal's model structure for quasi-categories with respect to the boundary inclusion $\partial\Delta^{n+2} \lra \Delta^{n+2}$, and is combinatorial.
\end{theorem}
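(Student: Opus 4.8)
The plan is to realise the model structure of Theorem \ref{ntruncmodstr} as the left Bousfield localisation of $\sSet_\mathrm{J}$ at the boundary inclusion $b_{n+2} \colon \partial\Delta^{n+2} \lra \Delta^{n+2}$, from which combinatoriality will be immediate given the general theory of such localisations.

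First I would construct the localisation. By Joyal's theorem, $\sSet_\mathrm{J}$ is left proper and combinatorial, so the existence theorem for left Bousfield localisations of left proper combinatorial model categories applies; this theorem follows from Smith's existence theorem (Theorem \ref{smiththm}) and is recalled in Appendix \ref{secbousfield}. It produces the left Bousfield localisation $L_{b_{n+2}}\sSet_\mathrm{J}$ of $\sSet_\mathrm{J}$ with respect to $b_{n+2}$, which is again left proper and combinatorial, has the same cofibrations as $\sSet_\mathrm{J}$ --- namely the monomorphisms --- and has as its fibrant objects precisely those fibrant objects of $\sSet_\mathrm{J}$ --- that is, those quasi-categories --- that are local with respect to $b_{n+2}$ in the model structure for quasi-categories.

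Next I would identify these fibrant objects. By Proposition \ref{localchar}, a quasi-category is local with respect to $b_{n+2}$ in the model structure for quasi-categories if and only if it is $n$-truncated. Hence $L_{b_{n+2}}\sSet_\mathrm{J}$ is a model structure on $\sSet$ whose cofibrations are the monomorphisms and whose fibrant objects are precisely the $n$-truncated quasi-categories --- the same cofibrations and the same fibrant objects as the model structure of Theorem \ref{ntruncmodstr}. Since a model structure on a category is uniquely determined by its class of cofibrations together with its class of fibrant objects, the two model structures coincide; in particular the model structure of Theorem \ref{ntruncmodstr} is combinatorial, as claimed.

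I do not expect any serious obstacle: the essential work has already been done in Proposition \ref{localchar}, and the remainder is formal, the only genuine inputs being the existence theorem for left Bousfield localisations (available because $\sSet_\mathrm{J}$ is left proper and combinatorial) and the uniqueness of model structures with prescribed cofibrations and fibrant objects. Equivalently, one may simply \emph{define} the model structure of Theorem \ref{ntruncmodstr} to be $L_{b_{n+2}}\sSet_\mathrm{J}$ from the start, using Proposition \ref{localchar} to pin down its fibrant objects and Corollary \ref{cartpropqcat} to see that it is cartesian; then Theorem \ref{newthm} holds essentially by construction, its only content beyond Theorem \ref{ntruncmodstr} being the combinatoriality that the localisation existence theorem supplies.
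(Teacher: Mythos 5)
Your proposal is correct and takes essentially the same route as the paper: the paper's (shared) proof of Theorems \ref{ntruncmodstr} and \ref{newthm} likewise applies Smith's existence theorem (Theorem \ref{smiththm}) to the left proper combinatorial model category $\sSet_\mathrm{J}$ and then invokes Proposition \ref{localchar} to identify the local fibrant objects as the $n$-truncated quasi-categories, obtaining left properness and combinatoriality from Theorem \ref{smiththm} and cartesianness from Proposition \ref{cartprop} with Corollary \ref{cartpropqcat}. Your closing observation --- that one may simply define the model structure of Theorem \ref{ntruncmodstr} to be this localisation --- is in fact exactly what the paper does, with the appendix's discussion (that a Bousfield localisation is determined by its cofibrations and local fibrant objects) supplying the uniqueness you appeal to.
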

\begin{proof}
Since the model category $\sSet_\mathrm{J}$ is left proper and combinatorial, there exists by Theorem \ref{smiththm} a Bousfield localisation of $\sSet_\mathrm{J}$ whose fibrant objects are precisely the quasi-categories that are local with respect to the single morphism $\partial\Delta^{n+2} \lra \Delta^{n+2}$. By Proposition   \ref{localchar}, these fibrant objects are precisely the $n$-truncated quasi-categories. Theorem \ref{smiththm} further implies that this model structure is left proper and combinatorial. The model structure is cartesian by Proposition  \ref{cartprop} and Corollary \ref{cartpropqcat}, since $\sSet_\mathrm{J}$ is a cartesian model category in which every object is cofibrant.
\end{proof}

\begin{remark} \label{nosmall}
In \cite[\S26.5]{joyalnotes}, the model structure of Theorem \ref{ntruncmodstr} is defined as the Bousfield localisation of the model structure $\sSet_\mathrm{J}$ for quasi-categories with respect to the (large) class of ``weak categorical $n$-equivalences'' (defined therein as the morphisms of simplicial sets satisfying the property stated in Lemma \ref{wkcatequiv} below). However, our identification of this model structure with the Bousfield localisation of $\sSet_\mathrm{J}$ with respect to the boundary inclusion $\partial\Delta^{n+2} \lra \Delta^{n+2}$, or indeed with respect to any small set of morphisms, is not contained in \cite{joyalnotes}.
\end{remark}

We will call the model structure of Theorem \ref{ntruncmodstr} the \emph{model structure for $n$-truncated quasi-categories}. Similarly, one can prove by Corollary \ref{kanloccor} and Theorem \ref{smiththm} that the $n$-types are the fibrant objects of the Bousfield localisation of the model structure for Kan complexes with respect to the boundary inclusion $\partial\Delta^{n+2} \lra \Delta^{n+2}$, as recalled in Remark \ref{ntypemodstr}. Since every $n$-type is an $n$-truncated quasi-category by Corollary \ref{kancor}, this model structure for $n$-types is also a Bousfield localisation of the model structure for $n$-truncated quasi-categories; indeed, the following proposition implies that it is the Bousfield localisation of this model structure with respect to the unique morphism $\Delta^1 \lra \Delta^0$ (cf.\ Examples \ref{example1} and \ref{example2}).

\begin{proposition} \label{kanisloc}
A quasi-category is a Kan complex if and only if it is local with respect to the unique morphism $\Delta^1 \lra \Delta^0$ in the model structure for quasi-categories.
\end{proposition}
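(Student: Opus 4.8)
The plan is to prove the two implications separately, in both cases using Lemma~\ref{qcatloclem} to replace the locality condition by a statement about the comparison map $J(X^f)\colon J(X^{\Delta^0})\lra J(X^{\Delta^1})$ induced by the unique morphism $f\colon\Delta^1\lra\Delta^0$. Since $X^{\Delta^0}\cong X$, this is a map $J(X^f)\colon J(X)\lra J(X^{\Delta^1})$, and on $0$-simplices it is the function $X_0\lra X_1$ sending an object $x$ to the degenerate $1$-simplex $\mathrm{id}_x$.

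For the forward implication, suppose $X$ is a Kan complex. Since $\Delta^1$ is contractible, the morphism $\Delta^1\lra\Delta^0$ is a weak equivalence in the model structure $\sSet_\mathrm{K}$ for Kan complexes, and hence every object --- in particular $X$ --- is local with respect to it in $\sSet_\mathrm{K}$; by Remark~\ref{howkanloc}, $X$ is then also local with respect to $\Delta^1\lra\Delta^0$ in the model structure for quasi-categories.

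For the converse, suppose $X$ is a quasi-category that is local with respect to $f\colon\Delta^1\lra\Delta^0$. By Lemma~\ref{qcatloclem} the map $J(X^f)\colon J(X)\lra J(X^{\Delta^1})$ is a homotopy equivalence of Kan complexes, and so in particular induces a surjection $\pi_0J(X)\lra\pi_0J(X^{\Delta^1})$. I would deduce from this that every $1$-simplex of $X$ is an isomorphism, whence $X$ is a Kan complex: indeed, by \cite[Lemma 4.18]{joyalbarcelona} every simplex of $X$ would then lie in the maximal sub-Kan complex $J(X)\subseteq X$, so that $X=J(X)$ is a Kan complex by \cite[Theorem 4.19]{joyalbarcelona}. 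To carry out this deduction, the key observation is that the property of being an isomorphism is constant along the edges of the Kan complex $J(X^{\Delta^1})$: an edge of $J(X^{\Delta^1})$ from a $1$-simplex $g$ to a $1$-simplex $g'$ of $X$ is a $1$-simplex of $X^{\Delta^1}$ which is an isomorphism there, hence a map $\Delta^1\times\Delta^1\lra X$ whose two ``vertical'' edges $u$ and $v$ are isomorphisms in $X$ (because the evaluation functors $X^{\Delta^1}\lra X$ preserve isomorphisms); decomposing $\Delta^1\times\Delta^1$ into its two non-degenerate $2$-simplices along the diagonal then shows that $g'=v\circ g\circ u^{-1}$ in $\ho X$, so that $g$ is an isomorphism if and only if $g'$ is. Since identities are isomorphisms and $\pi_0J(X^f)$ is surjective, every $1$-simplex of $X$ lies in the same component of $J(X^{\Delta^1})$ as some $\mathrm{id}_x$, and is therefore an isomorphism.

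There is no real obstacle here: the only non-formal ingredient is the last analysis of the components of $J(X^{\Delta^1})$, which rests on the (easy half of the) fact that isomorphisms in a functor quasi-category are detected pointwise, together with the elementary observation that a commutative square in a quasi-category with two parallel sides invertible exhibits each of the remaining two sides as a conjugate of the other. Everything else is formal.
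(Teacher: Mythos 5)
Your argument is correct, and after the common first step (using Lemma \ref{qcatloclem} to reduce locality to the map $J(X)\lra J(X^{\Delta^1})$) it diverges from the paper's proof in both directions. For the ``only if'' direction the paper does not invoke Remark \ref{howkanloc}; instead it treats both implications at once by factoring $J(X)\lra J(X^{\Delta^1})$ through $J(X)^{\Delta^1}$, noting that the first factor is a homotopy equivalence, and identifying the second factor (via \cite[Proposition 5.3]{joyalbarcelona}) as $J$ applied to the inclusion $X^{(\Delta^1)}\lra X^{\Delta^1}$ of the replete full sub-quasi-category of isomorphisms; being a monomorphism and a Kan fibration, that map is a homotopy equivalence precisely when it is surjective on objects, i.e.\ precisely when every edge of $X$ is invertible. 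Your converse replaces this structural input with a direct combinatorial analysis: an edge of $J(X^{\Delta^1})$ is a square in $X$ with invertible vertical sides, so invertibility is constant on the components of $J(X^{\Delta^1})$, and $\pi_0$-surjectivity of $J(X)\lra J(X^{\Delta^1})$ forces every edge of $X$ into the component of some $\mathrm{id}_x$. What the paper's route buys is brevity and a two-for-one treatment of both implications via two-of-three, at the cost of citing a nontrivial fact about $X^{(\Delta^1)}$; your route is more elementary and self-contained, using only the easy half of pointwise detection of isomorphisms in $X^{\Delta^1}$ and the Yoneda-style conjugation identity $g'=v\circ g\circ u^{-1}$ in $\ho X$, and your forward implication (contractibility of $\Delta^1$ plus Remark \ref{howkanloc}) is arguably the cleaner of the two.
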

\begin{proof}
Let $X$ be a quasi-category. By Lemma \ref{qcatloclem}, it suffices to prove that $X$ is a Kan complex if and only if the induced morphism of Kan complexes $J(X) \lra J(X^{\Delta^1})$ is a homotopy equivalence. To prove this, consider the following commutative diagram of Kan complexes. 
\begin{equation*}
\cd[@C=1em]{
& J(X) \ar[dr] \ar[dl] \\
J(X)^{\Delta^1} \ar[rr] && J(X^{\Delta^1})
}
\end{equation*}
In this diagram, the left-diagonal morphism is a homotopy equivalence, since $\Delta^1 \lra \Delta^0$ is a homotopy equivalence. Hence, by the two-of-three property, it remains to show that $X$ is a Kan complex if and only if the bottom morphism in this diagram is a homotopy equivalence. But this bottom morphism is both a monomorphism and a Kan fibration, since, by \cite[Proposition 5.3]{joyalbarcelona}, it is the image under the functor $J$ of the inclusion $X^{(\Delta^1)} \lra X^{\Delta^1}$ of the replete full sub-quasi-category of $X^{\Delta^1}$ consisting of the isomorphisms in $X$, which is both a monomorphism and an isofibration. Hence the bottom morphism is a homotopy equivalence if and only if it is surjective on objects, which is so precisely when every morphism in the quasi-category $X$ is an isomorphism, that is, precisely when $X$ is a Kan complex.
\end{proof}

 We have constructed the model structure for $n$-truncated quasi-categories as the Bousfield localisation of the model structure for quasi-categories with respect to the boundary inclusion $\partial\Delta^{n+2} \lra \Delta^{n+2}$. However, as in Remark \ref{bouslocrmk}, this model structure can also be described as the Bousfield localisation of the model structure for quasi-categories with respect to any of a variety of alternative morphisms. 
  To conclude this section, we give one such alternative morphism. This will be derived as an instance of a more general proposition, which we will prove by an application of the following standard result.

Consider a commutative diagram of simplicial sets as displayed below,
 \begin{equation*}
 \cd[@C=1em]{
 X \ar[rr]^-f \ar[dr]_-p && Y \ar[dl]^-q	 \\
 & A
 }
 \end{equation*}
 in which the morphisms $p$ and $q$ are Kan fibrations. A standard result states that 
the morphism $f$ is a weak homotopy equivalence if and only if, for each $0$-simplex $a \in A_0$, the induced morphism between fibres $f_a \colon p^{-1}(a) \lra q^{-1}(a)$ is a homotopy equivalence of Kan complexes.

Let $\Sigma \colon \sSet \lra \sSet$ denote the (two-point) suspension functor, that is, the composite of the left adjoint of the adjunction (\ref{susphomadj}) with the functor $\partial\Delta^1 \backslash \sSet \lra \sSet$ that forgets the base points. Since the adjunction (\ref{susphomadj}) is a Quillen adjunction, the suspension functor preserves monomorphisms and sends weak homotopy equivalences to weak categorical equivalences.

\begin{proposition} \label{localhoms}
Let $f \colon A \lra B$ be a morphism of simplicial sets. A quasi-category $X$ is local  with respect to the morphism $\Sigma(f) \colon \Sigma A \lra \Sigma B$ in the model structure for quasi-categories if and only if, for each pair of objects $x,y \in X$, the hom-space $\Hom_X(x,y)$ is local with respect to the morphism $f \colon A \lra B$ in the model structure for Kan complexes.
\end{proposition}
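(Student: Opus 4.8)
The plan is to use Lemma \ref{qcatloclem} to translate both sides of the claimed equivalence into assertions about homotopy equivalences of Kan complexes, and then to compare them fibrewise over $X \times X$. By Lemma \ref{qcatloclem}, the quasi-category $X$ is local with respect to $\Sigma(f) \colon \Sigma A \lra \Sigma B$ if and only if the morphism $J(X^{\Sigma(f)}) \colon J(X^{\Sigma B}) \lra J(X^{\Sigma A})$ is a homotopy equivalence of Kan complexes; and, since each hom-space $\Hom_X(x,y)$ is a Kan complex, Remark \ref{howkanloc} (together with Lemma \ref{qcatloclem}, or directly with the standard description of derived hom-spaces in $\sSet_\mathrm{K}$) shows that $\Hom_X(x,y)$ is local with respect to $f$ in $\sSet_\mathrm{K}$ if and only if $\Hom_X(x,y)^f \colon \Hom_X(x,y)^B \lra \Hom_X(x,y)^A$ is a homotopy equivalence of Kan complexes. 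So it suffices to prove that $J(X^{\Sigma(f)})$ is a homotopy equivalence if and only if $\Hom_X(x,y)^f$ is a homotopy equivalence for every pair of objects $x,y \in X$.

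The morphism $\Sigma(f)$ is compatible with the inclusions of the two base points $\partial\Delta^1 \lra \Sigma A$ and $\partial\Delta^1 \lra \Sigma B$, which by the pushout (\ref{susppo}) are monomorphisms; hence, as $X$ is a quasi-category, applying $X^{(-)}$ yields a commutative triangle over $X^{\partial\Delta^1} = X \times X$ whose two legs $X^{\Sigma B} \lra X \times X$ and $X^{\Sigma A} \lra X \times X$ are isofibrations. Applying $J$ — which preserves products and pullbacks and sends isofibrations to Kan fibrations \cite[Proposition 4.27]{joyalbarcelona} — produces a morphism $J(X^{\Sigma(f)})$ over $J(X) \times J(X)$ between two Kan fibrations with codomain $J(X) \times J(X)$. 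The key computation is the identification, for an arbitrary simplicial set $U$ and a $0$-simplex $(x,y)$, of the fibre of $X^{\Sigma U} \lra X \times X$ over $(x,y)$: unwinding the pushout description (\ref{susppo}) of $\Sigma U$ and applying the exponential adjunction, a $k$-simplex of this fibre is a morphism $\Delta^k \times U \times \Delta^1 \lra X$ that is constant at $x$ on $\Delta^k \times U \times \{0\}$ and constant at $y$ on $\Delta^k \times U \times \{1\}$, which by the pullback (\ref{hompb}) is precisely a $k$-simplex of $\Hom_X(x,y)^U$; so the fibre is isomorphic to $\Hom_X(x,y)^U$, naturally in $U$. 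Since $J$ preserves pullbacks and acts as the identity on the Kan complex $\Hom_X(x,y)^U$, the fibre of $J(X^{\Sigma U}) \lra J(X) \times J(X)$ over $(x,y)$ is again $\Hom_X(x,y)^U$, and under this identification the morphism induced on fibres over $(x,y)$ by $J(X^{\Sigma(f)})$ is exactly $\Hom_X(x,y)^f$.

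With these identifications in place, the proposition follows from the standard result recalled immediately before its statement: since $J(X^{\Sigma B}) \lra J(X) \times J(X)$ and $J(X^{\Sigma A}) \lra J(X) \times J(X)$ are Kan fibrations, the morphism $J(X^{\Sigma(f)})$ between them is a weak homotopy equivalence — equivalently, being a morphism of Kan complexes, a homotopy equivalence — if and only if the morphism it induces on fibres over every $0$-simplex $(x,y)$ of $J(X) \times J(X)$, namely $\Hom_X(x,y)^f$ for every pair of objects $x,y \in X$, is a homotopy equivalence of Kan complexes. I expect the fibre computation in the second paragraph to be the only real work: one must carefully track the two base points through the pushout defining $\Sigma U$ and check the naturality of the isomorphism between $\Hom_X(x,y)^U$ and the fibre of $X^{\Sigma U} \lra X \times X$ over $(x,y)$; the remainder is a formal assembly of Lemma \ref{qcatloclem}, the properties of $J$, and the fibrewise criterion for equivalences.
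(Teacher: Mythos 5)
Your proof is correct and follows essentially the same route as the paper's: translate both localness conditions into homotopy equivalences via Lemma \ref{qcatloclem} and Remark \ref{howkanloc}, identify the fibre of $J(X^{\Sigma U}) \lra J(X \times X)$ over $(x,y)$ with $\Hom_X(x,y)^U$, and conclude by the fibrewise criterion for weak equivalences between Kan fibrations. The only cosmetic difference is that you compute the fibre by unwinding simplices and the exponential adjunction, where the paper assembles the same identification from the pasting lemma for pullbacks.
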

\begin{proof}
Let $f \colon A \lra B$ be a morphism of simplicial sets and let $X$ be a quasi-category. 
By Lemma \ref{qcatloclem}, $X$ is local with respect to the morphism $\Sigma(f)$ in $\sSet_\mathrm{J}$ if and only if the morphism of Kan complexes
\begin{equation} \label{kanfib1}
J(X^{\Sigma(f)}) \colon J(X^{\Sigma B}) \lra J(X^{\Sigma A})
\end{equation} 
is a homotopy equivalence. By Lemma \ref{qcatloclem} and Remark \ref{howkanloc}, for each pair of objects $x,y$ of $X$, the hom-space $\Hom_X(x,y)$ is local with respect to the morphism $f$ in $\sSet_\mathrm{K}$ if and only if the morphism of Kan complexes 
\begin{equation}\label{kanfib2}
\Hom_X(x,y)^f \colon \Hom_X(x,y)^B \lra \Hom_X(x,y)^A
\end{equation}
is a homotopy equivalence. Hence it is required to prove that the morphism (\ref{kanfib1}) is a homotopy equivalence if and only if the morphism (\ref{kanfib2}) is a homotopy equivalence for each pair of objects $x,y$ of $X$. 

From the commutative diagram of simplicial sets on the left below
\begin{equation*}
\cd[@C=1em]{
& \partial\Delta^1 \ar[dl]_-{(\bot,\top)} \ar[dr]^-{(\bot,\top)} \\
\Sigma A \ar[rr]_-{\Sigma(f)} && \Sigma B
}
\qquad
\qquad
\qquad
\cd[@C=1em]{
J(X^{\Sigma B}) \ar[rr]^-{J(X^{\Sigma(f)})} \ar[dr] && J(X^{\Sigma A}) \ar[dl] \\
 & J(X \times X)
}
\end{equation*}
we obtain the commutative diagram on the right above, in which the diagonal morphisms are Kan fibrations. By the standard result recalled above, the morphism $J(X^{\Sigma(f)})$ is a homotopy equivalence if and only if, for each pair of objects  $x,y$ of $X$, the induced morphism between the fibres over $(x,y)$ is a homotopy equivalence. Therefore, the result follows from the observation that, for each pair of objects $x,y$ of $X$, this induced morphism between the fibres is none other than the morphism (\ref{kanfib2}).
This can be seen as follows.

For each simplicial set $U$,  since the functor $V \mapsto X^V$ sends pushouts to pullbacks,  the quasi-category $X^{\Sigma U}$ is given by the pullback on the right below.
\begin{equation*}
\cd{
\Hom_X(x,y)^U \ar[r] \ar[d] \fatpullbackcorner & X^{\Sigma U} \ar[r] \ar[d] \fatpullbackcorner & (X^{\Delta^1})^U \ar[d] \\
\Delta^0 \ar[r]_-{(x,y)} & X \times X \ar[r] & (X\times X)^U
}
\end{equation*}
Since the functor $(-)^U$ preserves limits, we see by the pasting lemma for pullbacks that the fibre of the isofibration $X^{\Sigma U} \lra X \times X$ over a pair of objects $(x,y)$ is the Kan complex $\Hom_X(x,y)^U$, and hence, upon application of the limit preserving functor $J$, that this Kan complex is also the fibre of the Kan fibration $J(X^{\Sigma U}) \lra J(X \times X)$ over $(x,y)$. A further application of the pasting lemma to the diagram
\begin{equation*}
\cd[@C=3em]{
\Hom_X(x,y)^B \fatpullbackcorner \ar[d] \ar[rr]^-{\Hom_X(x,y)^f} && \Hom_X(x,y)^A \ar[r] \ar[d] \fatpullbackcorner & \Delta^0 \ar[d]^-{(x,y)} \\
J(X^{\Sigma B}) \ar[rr]_-{J(X^{\Sigma(f)})} && J(X^{\Sigma A}) \ar[r] & J(X \times X)
}
\end{equation*}
shows that the morphism (\ref{kanfib2}) is the pullback of the morphism (\ref{kanfib1}), seen as a morphism of simplicial sets over $J(X \times X)$, along the morphism $(x,y) \colon \Delta^0 \lra J(X \times X)$, as required. 
\end{proof}

By applying this proposition to the morphism $\partial\Delta^{n+1} \lra \Delta^{n+1}$, we obtain an alternative characterisation of $n$-truncated quasi-categories as local objects, and thus an alternative description of the model structure for $n$-truncated quasi-categories as a Bousfield localisation of the model structure for quasi-categories.

\begin{corollary} \label{finalcor}
A quasi-category is $n$-truncated if and only if it is local with respect to the morphism $\Sigma(\partial\Delta^{n+1} \lra \Delta^{n+1})$ in the model structure for quasi-categories. Hence the model structure for $n$-truncated quasi-categories is the Bousfield localisation of the model structure for quasi-categories with respect to the morphism  $\Sigma(\partial\Delta^{n+1} \lra \Delta^{n+1})$.
\end{corollary}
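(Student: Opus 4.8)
The plan is to obtain both assertions from Proposition \ref{localhoms} together with the characterisation of $(n-1)$-types as local objects supplied by Corollary \ref{kanloccor}. First I would specialise Proposition \ref{localhoms} to the boundary inclusion $f = (\partial\Delta^{n+1} \lra \Delta^{n+1})$: it says that a quasi-category $X$ is local with respect to $\Sigma(\partial\Delta^{n+1} \lra \Delta^{n+1})$ in $\sSet_\mathrm{J}$ if and only if, for every pair of objects $x,y \in X$, the hom-space $\Hom_X(x,y)$ is local with respect to $\partial\Delta^{n+1} \lra \Delta^{n+1}$ in $\sSet_\mathrm{K}$. Writing this boundary inclusion as $\partial\Delta^{(n-1)+2} \lra \Delta^{(n-1)+2}$ and applying Corollary \ref{kanloccor} --- legitimate here because the standing hypothesis $n \geq -1$ gives $n - 1 \geq -2$, the range covered by Remark \ref{thelowestcase} --- this local-object condition on $\Hom_X(x,y)$ becomes the condition that $\Hom_X(x,y)$ be an $(n-1)$-type. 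By Definition \ref{deftrunc}, requiring this of every pair $x,y$ is precisely the condition that $X$ be $n$-truncated, which establishes the first sentence of the corollary.

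For the second sentence I would argue that the two Bousfield localisations under comparison have the same fibrant objects, and therefore agree. Applying Theorem \ref{smiththm} to the single morphism $\Sigma(\partial\Delta^{n+1} \lra \Delta^{n+1})$ over the left proper combinatorial model category $\sSet_\mathrm{J}$ yields a Bousfield localisation whose fibrant objects are exactly the quasi-categories local with respect to that morphism, which by the first part are the $n$-truncated quasi-categories. On the other hand, the model structure for $n$-truncated quasi-categories of Theorem \ref{ntruncmodstr} is, by Theorem \ref{newthm}, the Bousfield localisation of $\sSet_\mathrm{J}$ with respect to $\partial\Delta^{n+2} \lra \Delta^{n+2}$, and its fibrant objects are also the $n$-truncated quasi-categories. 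Since a left Bousfield localisation of a fixed left proper model category is determined by its class of fibrant objects (cf.\ Remark \ref{bouslocrmk}), the two model structures coincide.

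I do not anticipate a genuine obstacle: the argument is an assembly of results already in hand, and the only points needing care are bookkeeping ones. One is the degree shift, so that Corollary \ref{kanloccor} is applied to $\partial\Delta^{n+1} \lra \Delta^{n+1}$ rather than to $\partial\Delta^{n+2} \lra \Delta^{n+2}$. The other is the degenerate case $n = -1$: there $\Sigma(\partial\Delta^{n+1} \lra \Delta^{n+1})$ is $\Sigma(\emptyset \lra \Delta^0)$, which unwinds via the pushout (\ref{susppo}) to the endpoint inclusion $\partial\Delta^1 \lra \Delta^1$, and the hom-spaces in question must be $(-2)$-types, i.e.\ contractible Kan complexes; one relies on Remark \ref{thelowestcase} for the validity of Corollary \ref{kanloccor} in this case. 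It would be worth spelling out the identification $\Sigma(\emptyset \lra \Delta^0) \cong (\partial\Delta^1 \lra \Delta^1)$ explicitly so that the $n = -1$ case of the corollary is transparent.
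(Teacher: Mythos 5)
Your argument is correct and is essentially the paper's own proof: the first sentence is exactly the combination of Proposition \ref{localhoms} with Corollary \ref{kanloccor} (via Remark \ref{thelowestcase} when $n=-1$) applied to $\partial\Delta^{n+1} \lra \Delta^{n+1}$, and the second sentence follows because a Bousfield localisation is determined by its fibrant objects, as noted in the appendix. Your extra care with the degree shift and the $n=-1$ case is sound, and your unwinding of $\Sigma(\emptyset \lra \Delta^0)$ as $\partial\Delta^1 \lra \Delta^1$ is correct.
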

\begin{proof}
By Corollary \ref{kanloccor} (or Remark \ref{thelowestcase}, if $n=-1$), a Kan complex is an $(n-1)$-type if and only if it is local with respect to the boundary inclusion $\partial\Delta^{n+1} \lra \Delta^{n+1}$ in the model structure for Kan complexes. Hence the result follows from Proposition  \ref{localhoms}.
\end{proof}

\section{Categorical $n$-equivalences} \label{seccatnequiv}

\emph{Throughout this section, let $n \geq 0$ be an integer.}

\medskip

A morphism of simplicial sets is said to be a \emph{weak categorical $n$-equivalence} if it is a weak equivalence in the model structure for $n$-truncated quasi-categories established in Theorems \ref{ntruncmodstr} and \ref{newthm}. Since this model structure is a Bousfield localisation of the model structure for quasi-categories, the class of weak categorical $n$-equivalences enjoys the following characterisation.

\begin{lemma}[{\cite[\S26.5]{joyalnotes}}] \label{wkcatequiv}
A morphism of simplicial sets $f \colon A \lra B$ is a weak categorical $n$-equivalence if and only if the function
\begin{equation*}
(\Ho\sSet_\mathrm{J})(f,X) : (\Ho\sSet_\mathrm{J})(B,X) \lra (\Ho\sSet_\mathrm{J})(A,X)
\end{equation*}
is a bijection for each $n$-truncated quasi-category $X$.
\end{lemma}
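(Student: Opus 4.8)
The plan is to deduce the lemma from the standard fact that a morphism $f \colon A \lra B$ of a model category $\mathcal{M}$ is a weak equivalence precisely when, for every fibrant object $X$, the induced map $(\Ho\mathcal{M})(B,X) \lra (\Ho\mathcal{M})(A,X)$ is a bijection. Since the model structure $\sSet_{\mathrm{J},n}$ for $n$-truncated quasi-categories is, by Theorems \ref{ntruncmodstr} and \ref{newthm}, a left Bousfield localisation of $\sSet_\mathrm{J}$ whose cofibrations are again the monomorphisms and whose fibrant objects are exactly the $n$-truncated quasi-categories, the lemma becomes the transcription of this fact to $\mathcal{M} = \sSet_{\mathrm{J},n}$, once one checks that the relevant hom-sets are not changed by the localisation.

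The key preliminary step is the (standard) observation that for every simplicial set $B$ and every $n$-truncated quasi-category $X$, the set of homotopy classes of maps $B \to X$ is the same whether computed in $\sSet_\mathrm{J}$ or in $\sSet_{\mathrm{J},n}$. Indeed, the two model structures have the same cofibrations, and every weak categorical equivalence is a weak categorical $n$-equivalence, so any good cylinder object for $B$ in $\sSet_\mathrm{J}$ is also one in $\sSet_{\mathrm{J},n}$; since $B$ is cofibrant and $X$ is fibrant in both structures, the left-homotopy relation on $\Hom(B,X)$, and hence its quotient, is the same. Thus $(\Ho\sSet_{\mathrm{J},n})(B,X) = (\Ho\sSet_\mathrm{J})(B,X)$, compatibly with precomposition, so that the map induced by $f$ on either side is $(\Ho\sSet_\mathrm{J})(f,X)$.

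With this in hand, the ``only if'' direction is immediate: a weak categorical $n$-equivalence $f$ becomes an isomorphism in $\Ho\sSet_{\mathrm{J},n}$, hence $(\Ho\sSet_{\mathrm{J},n})(f,X) = (\Ho\sSet_\mathrm{J})(f,X)$ is a bijection for every $n$-truncated quasi-category $X$. For the ``if'' direction, suppose $(\Ho\sSet_\mathrm{J})(f,X)$ is a bijection for every $n$-truncated quasi-category $X$. Take fibrant replacements $A \lra \hat A$ and $B \lra \hat B$ in $\sSet_{\mathrm{J},n}$ --- so $\hat A$ and $\hat B$ are $n$-truncated quasi-categories --- and an induced morphism $\hat f \colon \hat A \lra \hat B$; since $A \cong \hat A$ and $B \cong \hat B$ in $\Ho\sSet_{\mathrm{J},n}$, the preliminary step and the hypothesis together show that $(\Ho\sSet_{\mathrm{J},n})(\hat f, X)$ is a bijection for every $n$-truncated quasi-category $X$. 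As every object of $\Ho\sSet_{\mathrm{J},n}$ is isomorphic to an $n$-truncated quasi-category (via its fibrant replacement) and hom-sets are isomorphism-invariant, the natural transformation $(\Ho\sSet_{\mathrm{J},n})(\hat f, -)$ is a bijection at every object of $\Ho\sSet_{\mathrm{J},n}$; by the Yoneda lemma $\hat f$ is an isomorphism in $\Ho\sSet_{\mathrm{J},n}$, hence a weak categorical $n$-equivalence, and therefore so is $f$ by the two-out-of-three property.

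I do not expect a genuine obstacle here: the substance of the proof is the Yoneda lemma in $\Ho\sSet_{\mathrm{J},n}$, and everything else is standard model-categorical bookkeeping. The only point that deserves care is the preliminary step --- that the localisation leaves the homotopy-class sets $[B,X]$ unchanged when $X$ is fibrant in both structures --- which is a routine feature of left Bousfield localisations recalled in Appendix \ref{secbousfield}. (Alternatively one could run the argument through derived mapping spaces, using Corollary \ref{cartpropqcat} to bootstrap from $\pi_0$ to higher homotopy groups; but for the $\pi_0$-level statement actually asserted, no such bootstrap is needed.)
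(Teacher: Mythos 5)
Your proposal is correct and follows essentially the same route as the paper: the paper simply defers to the general Lemma~\ref{locwe} in Appendix~\ref{secbousfield}, whose proof consists of exactly your two ingredients --- the cylinder-object argument showing that $(\Ho\sSet_{\mathrm{J},n})(B,X)\cong(\Ho\sSet_\mathrm{J})(B,X)$ for $X$ an $n$-truncated quasi-category, and the Yoneda lemma in $\Ho\sSet_{\mathrm{J},n}$ using that every object there is isomorphic to a local fibrant one. The only cosmetic difference is that you run the argument inline for this special case (where cofibrant replacement is unnecessary since every simplicial set is cofibrant) rather than quoting the general statement.
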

\begin{proof}
Since the weak categorical $n$-equivalences are the weak equivalences in the model structure for $n$-truncated quasi-categories, which is a Bousfield localisation of the model structure for quasi-categories, this is an instance of Lemma \ref{locwe}. 
\end{proof}

The main goal of this section is to prove that a morphism of quasi-categories is a weak categorical $n$-equivalence if and only if it is a categorical $n$-equivalence, in the sense of the following definitions. (We reiterate that this result was stated without proof in \cite[\S26.6]{joyalnotes}.)

\begin{definition} \label{catnequiv}
If $n \geq 1$, a morphism of quasi-categories $f \colon X \lra Y$ is said to be a \emph{categorical $n$-equivalence} if it is essentially surjective on objects, and if for each pair of objects $x,y \in X$, the induced morphism of hom-spaces $f = f_{x,y} \colon \Hom_X(x,y) \lra \Hom_Y(fx,fy)$ is a homotopy $(n-1)$-equivalence.
\end{definition}

Let us first examine the lowest dimensional case of this definition.

\begin{proposition}
A morphism of quasi-categories is a categorical $1$-equivalence if and only if it is sent by the fundamental category functor $\tau_1 \colon \sSet \lra \Cat$ to an equivalence of categories.
\end{proposition}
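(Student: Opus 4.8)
The plan is to unwind both conditions into statements about the homotopy category functor and to observe that they coincide, using only facts already recalled in \S\ref{sectruncated}.

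First I would recall that for a quasi-category $X$ the fundamental category $\tau_1 X$ is isomorphic to the homotopy category $\ho X$, naturally in $X$, so that for a morphism of quasi-categories $f \colon X \lra Y$ the functor $\tau_1(f)$ may be identified with $\ho(f) \colon \ho X \lra \ho Y$. Recall further that $\ho X$ has the same objects as $X$ and hom-sets $(\ho X)(x,y) \cong \pi_0(\Hom_X(x,y))$, and that under this identification the action of $\ho(f)$ on the hom-set from $x$ to $y$ is the function $\pi_0(f_{x,y}) \colon \pi_0(\Hom_X(x,y)) \lra \pi_0(\Hom_Y(fx,fy))$ induced by the morphism of hom-spaces $f_{x,y} \colon \Hom_X(x,y) \lra \Hom_Y(fx,fy)$.

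Next I would split the statement ``$\tau_1(f)$ is an equivalence of categories'' into its two defining conditions: $\tau_1(f) \cong \ho(f)$ is an equivalence if and only if it is essentially surjective on objects and fully faithful. By definition, $f$ is essentially surjective on objects precisely when $\ho(f)$ is. And $\ho(f)$ is fully faithful precisely when, for every pair of objects $x,y \in X$, the function $\pi_0(f_{x,y})$ is a bijection; since a morphism of Kan complexes is a homotopy $0$-equivalence if and only if it is inverted by $\pi_0$, this holds precisely when each $f_{x,y}$ is a homotopy $0$-equivalence, i.e.\ a homotopy $(1-1)$-equivalence. Combining the two conditions, $\tau_1(f)$ is an equivalence of categories if and only if $f$ is a categorical $1$-equivalence in the sense of Definition \ref{catnequiv}.

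There is no real obstacle here: the statement is a direct translation between the definition of a categorical $1$-equivalence and the standard characterisation of an equivalence of categories, and all of the input facts about $\ho X$ and its hom-sets have already been recorded above. The only point requiring a moment's care is the naturality of the isomorphism $\tau_1 X \cong \ho X$, which is what guarantees that the two sets of conditions are being compared on one and the same functor.
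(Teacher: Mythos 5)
Your proof is correct and follows essentially the same route as the paper's: identify $\tau_1(f)$ with $\ho(f)$, match essential surjectivity on objects directly, and match full faithfulness of $\ho(f)$ with $f$ being a homotopy $0$-equivalence on hom-spaces via $(\ho X)(x,y) \cong \pi_0(\Hom_X(x,y))$. Your version is slightly more explicit about the naturality of $\tau_1 X \cong \ho X$ and about why $\pi_0$-bijectivity is the same as being a homotopy $0$-equivalence, but there is no substantive difference.
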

\begin{proof}
Recall that the restriction of the fundamental category functor to the full subcategory of quasi-categories is naturally isomorphic to the homotopy category functor. Let $f \colon X\lra Y$ be a morphism of quasi-categories. By definition, $f$ is essentially surjective on objects if and only if the induced functor between homotopy categories $\ho(f) \colon \ho X \lra \ho Y$ is  essentially surjective on objects. By construction, the functor $\ho(f)$ is fully faithful if and only if $f$ is a homotopy $0$-equivalence on hom-spaces. Therefore the morphism of quasi-categories $f$ is a categorical $0$-equivalence if and only if the functor $\ho(f)$ is an equivalence of categories.
\end{proof} 

Similarly, let us make the following definition (cf.\ Proposition \ref{posetprop}). Recall that the category $\mathbf{Pos}$ of posets is a reflective subcategory of $\Cat$; the poset reflection  of a  category $A$ is the poset quotient of its preorder reflection, where the latter is the preorder whose objects are those of $A$ and in which one has $a \leq b$ if and only if the hom-set $A(a,b)$ is nonempty. Thus one obtains a composite adjunction
\begin{equation} \label{posadj}
\xymatrix{
\mathbf{Pos} \ar@<-1.5ex>[rr]^-{\hdash}_-N && \ar@<-1.5ex>[ll]_-{\tau_p} \sSet
}
\end{equation}
whose fully faithful right adjoint sends a poset to its nerve, and whose left adjoint sends a simplicial set to the poset reflection of its fundamental category.

\begin{definition}
A morphism of quasi-categories is said to be a \emph{categorical $0$-equivalence} if it is sent by the functor $\tau_p \colon \sSet \lra \mathbf{Pos}$ to an isomorphism of posets.
\end{definition}

\begin{remark} \label{cat0equiv}
Unpacking this definition, one finds that a morphism of quasi-categories $f \colon X \lra Y$ is a categorical $0$-equivalence if and only if it satisfies the following two properties:
\begin{enumerate}[(i)]
\item for each object $z\in Y$, there exists an object $x \in X$ and a pair of morphisms $Fx \lra z$ and $z \lra Fx$ in $Y$, and
\item for each pair of objects $x,y \in X$, the induced morphism $f \colon \Hom_X(x,y) \lra \Hom_Y(fx,fy)$ is a homotopy $(-1)$-equivalence.
\end{enumerate}
If $Y$ is a $0$-truncated quasi-category, then any endomorphism in $Y$ is necessarily an isomorphism, and so a morphism of quasi-categories $f\colon X \lra Y$ satisfies property (i) if and only if it is essentially surjective on objects.
\end{remark}

\begin{remark}
To prevent a proliferation of cases, we have made the global assumption $n \geq 0$ in this section. The $n =-1$ case of the problem of this section is easily dispensed with: since the model structure for $(-1)$-truncated quasi-categories coincides with the model structure for $(-1)$-types, a morphism of simplicial sets is a weak categorical $(-1)$-equivalence if and only if it is inverted by the functor $\pi_{-1} \colon \sSet \lra \{0<1\}$ that sends the empty simplicial set to $0$ and every nonempty simplicial set to $1$.
\end{remark}

Next, we establish a few useful properties of the class of categorical $n$-equivalences.

\begin{lemma} \label{coskequiv}
Let $f \colon X \lra Y$ be a morphism of $n$-truncated quasi-categories. Then the following properties are equivalent.
\begin{enumerate}[leftmargin=*, font=\normalfont, label=(\roman*)]
\item $f$ is an equivalence of quasi-categories.
\item $f$ is a weak categorical $n$-equivalence.
\item $f$ is a categorical $n$-equivalence.
\end{enumerate}
\end{lemma}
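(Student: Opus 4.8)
The plan is to split the cycle into the two equivalences (i) $\iff$ (ii) and (i) $\iff$ (iii): the first is a matter of general Bousfield-localisation theory, while the second follows from the characterisation of equivalences of quasi-categories together with the fact, recalled in \S\ref{secsimp}, that a morphism of $(n-1)$-types is a homotopy equivalence exactly when it is a homotopy $(n-1)$-equivalence.

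For (i) $\iff$ (ii): By Theorem \ref{newthm}, the model structure for $n$-truncated quasi-categories is the Bousfield localisation of $\sSet_\mathrm{J}$ with respect to the boundary inclusion $\partial\Delta^{n+2} \lra \Delta^{n+2}$, and by Theorem \ref{ntruncmodstr} its fibrant objects are precisely the $n$-truncated quasi-categories. Since $X$ and $Y$ are $n$-truncated quasi-categories, they are fibrant both in $\sSet_\mathrm{J}$ and in this localisation, so I would apply the standard fact, recalled in Appendix \ref{secbousfield}, that a morphism between fibrant objects of a Bousfield localisation is a weak equivalence in the localisation if and only if it is a weak equivalence in the ambient model structure. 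As a weak equivalence between quasi-categories in $\sSet_\mathrm{J}$ is by definition an equivalence of quasi-categories, this yields (i) $\iff$ (ii). (Should only the forward implication be available in the appendix, the converse is immediate in any case, since $\sSet_\mathrm{J}$ and its localisation have the same cofibrations and the class of weak equivalences only grows under Bousfield localisation.)

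For (i) $\iff$ (iii): Recall the fundamental theorem of quasi-category theory, that $f$ is an equivalence of quasi-categories if and only if it is essentially surjective on objects and the induced morphism $\Hom_X(x,y) \lra \Hom_Y(fx,fy)$ is a homotopy equivalence for every pair of objects $x,y \in X$. Since $X$ and $Y$ are $n$-truncated, these hom-spaces are $(n-1)$-types (Definition \ref{deftrunc}), and --- by the result recalled in \S\ref{secsimp}, applied with $n-1$ in place of $n$, which is legitimate since $n \geq 0$ and hence $n-1 \geq -1 \geq -2$ --- a morphism of $(n-1)$-types is a homotopy equivalence if and only if it is a homotopy $(n-1)$-equivalence. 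Thus the hom-space condition in the fundamental theorem coincides with the one in Definition \ref{catnequiv}, which for $n \geq 1$ immediately identifies (i) with (iii). For $n = 0$, where a categorical $0$-equivalence is defined via the poset-reflection functor $\tau_p$, I would instead invoke Remark \ref{cat0equiv}: since $Y$ is $0$-truncated, a categorical $0$-equivalence is exactly a morphism of quasi-categories that is essentially surjective on objects and a homotopy $(-1)$-equivalence on each hom-space, so the same argument applies.

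I do not anticipate a genuine obstacle, as the statement is essentially a repackaging of results already established; the only point requiring care is the $n = 0$ case, where the definition of a categorical $0$-equivalence does not formally match the $n \geq 1$ pattern and must be unwound through Remark \ref{cat0equiv}, using the hypothesis that the codomain is $0$-truncated.
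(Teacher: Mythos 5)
Your proposal is correct and follows essentially the same route as the paper: (i) $\iff$ (ii) via the general fact that weak equivalences between fibrant objects agree in a Bousfield localisation and its ambient model structure, and (i) $\iff$ (iii) via the fundamental theorem of quasi-category theory combined with the fact that a morphism of $(n-1)$-types is a homotopy equivalence precisely when it is a homotopy $(n-1)$-equivalence, with the $n=0$ case unwound through Remark \ref{cat0equiv} using that the codomain is $0$-truncated. This matches the paper's proof, including its parenthetical appeal to Remark \ref{cat0equiv} for $n=0$.
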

\begin{proof}
The equivalence (i) $\iff$ (ii) is a consequence of the fact that the model structure for $n$-truncated quasi-categories is a Bousfield localisation of the model structure for quasi-categories.

To prove the equivalence (i) $\iff$ (iii), recall that a morphism of quasi-categories is an equivalence if and only if it is essentially surjective on objects and a homotopy equivalence on hom-spaces, and that a morphism between $(n-1)$-types is a homotopy equivalence if and only if it is a homotopy $(n-1)$-equivalence. Since the hom-spaces of $n$-truncated quasi-categories are $(n-1)$-types, we see that a morphism of $n$-truncated quasi-categories is an equivalence if and only if it is a categorical $n$-equivalence (by Remark \ref{cat0equiv} if $n=0$). 
\end{proof}

\begin{lemma} \label{twothree}
Let $f \colon X \lra Y$ and $g \colon Y \lra Z$ be morphisms of quasi-categories. If two of the morphisms $f,g,gf$ are categorical $n$-equivalences, then so is the third.
\end{lemma}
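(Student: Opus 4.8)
The plan is to handle the case $n=0$ separately and to reduce the case $n\geq 1$ to the two-of-three property for homotopy $(n-1)$-equivalences of Kan complexes (Lemma \ref{homo23}). For $n=0$ there is nothing to do beyond a formality: a morphism of quasi-categories is a categorical $0$-equivalence if and only if it is sent to an isomorphism of posets by the functor $\tau_p\colon\sSet\lra\mathbf{Pos}$, and since $\tau_p$ is a functor and isomorphisms in any category satisfy two-of-three, the claim follows at once. So assume from now on that $n\geq 1$, and let $f\colon X\lra Y$ and $g\colon Y\lra Z$ be as in the statement. I would first record two observations. (a)~A categorical $n$-equivalence $h$ between quasi-categories induces an equivalence of homotopy categories $\ho(h)$: it is essentially surjective by definition, and fully faithful because each induced morphism $\Hom(a,a')\lra\Hom(ha,ha')$ is a homotopy $(n-1)$-equivalence, hence (as $n-1\geq 0$) a bijection on $\pi_0$, while $\ho$ records hom-sets as the sets of connected components of hom-spaces. (b)~Given a functor of quasi-categories $h\colon A\lra B$, objects $a_0,a_1\in A$, and isomorphisms $\phi_i\colon a_i\lra a_i'$ in $A$ for $i=0,1$, conjugation by the $\phi_i$ defines a homotopy equivalence $\Hom_A(a_0,a_1)\lra\Hom_A(a_0',a_1')$ which, together with the analogous one for the isomorphisms $h\phi_i$ in $B$, fits into a square with the two relevant instances of $h$ on hom-spaces that commutes up to homotopy; this is standard, given that composition with an isomorphism in a quasi-category is a homotopy equivalence and that functors of quasi-categories preserve composition up to homotopy. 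I also use freely that a homotopy equivalence is in particular a homotopy $(n-1)$-equivalence and that the property of being a homotopy $(n-1)$-equivalence is invariant under homotopy of maps.

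For any objects $x_0,x_1\in X$, the morphism of hom-spaces induced by $gf$ is the composite
\begin{equation*}
\Hom_X(x_0,x_1)\lra\Hom_Y(fx_0,fx_1)\lra\Hom_Z(gfx_0,gfx_1)
\end{equation*}
of the morphisms induced by $f$ and by $g$, so Lemma \ref{homo23} settles the hom-space condition in each of the three cases: if $f$ and $g$ are categorical $n$-equivalences, both factors, and hence the composite, are homotopy $(n-1)$-equivalences; if $g$ and $gf$ are, the first factor is; and if $f$ and $gf$ are, the second factor is---that is, $g$ satisfies the hom-space condition at pairs of objects of the form $(fx_0,fx_1)$. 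Essential surjectivity is equally quick: it is closed under composition; if $gf$ is essentially surjective then so is $g$, its essential image containing that of $gf$; and if $g$ and $gf$ are essentially surjective then so is $f$, since for $y\in Y$ one may choose $x\in X$ with $gfx\cong gy$ in $\ho Z$ (possible since $gf$ is essentially surjective) and invoke the fact that the fully faithful functor $\ho(g)$ reflects isomorphisms, by (a), to conclude that $fx\cong y$ in $\ho Y$.

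The only case not yet complete is that $g$ is a categorical $n$-equivalence when $f$ and $gf$ are, where it remains to upgrade the hom-space condition for $g$ from pairs of the form $(fx_0,fx_1)$ to an arbitrary pair $(y_0,y_1)$ of objects of $Y$; this is the one step I expect to require genuine care. Since $f$ is essentially surjective, choose $x_i\in X$ and isomorphisms $\phi_i\colon fx_i\lra y_i$ in $Y$; observation (b) then gives a homotopy-commutative square relating the morphism $g$ on $\Hom_Y(fx_0,fx_1)$, the morphism $g$ on $\Hom_Y(y_0,y_1)$, and the two conjugation homotopy equivalences determined by the $\phi_i$ and the $g\phi_i$. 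As the morphism $g$ on $\Hom_Y(fx_0,fx_1)$ is a homotopy $(n-1)$-equivalence (established above) and the conjugation maps are homotopy equivalences, two further applications of Lemma \ref{homo23}---first along the composite, then transporting across the homotopy---show that the morphism $g$ on $\Hom_Y(y_0,y_1)$ is a homotopy $(n-1)$-equivalence, as required. The main obstacle is therefore precisely the bookkeeping behind observation (b): checking that the conjugation square commutes up to homotopy and that homotopy $(n-1)$-equivalence is a homotopy-invariant notion---both routine, but worth spelling out.
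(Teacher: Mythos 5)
Your overall architecture coincides with the paper's: the $n=0$ case is dispatched by functoriality of $\tau_p$, essential surjectivity is handled in the three cases exactly as in the paper (including the use of full faithfulness of $\ho(g)$, valid since $n\geq 1$, to deduce essential surjectivity of $f$ from that of $gf$), and the hom-space conditions in the first two cases and at pairs $(fx_0,fx_1)$ in the third are all obtained from Lemma \ref{homo23} applied to the composite $\Hom_X(x_0,x_1)\lra\Hom_Y(fx_0,fx_1)\lra\Hom_Z(gfx_0,gfx_1)$. The one place where you diverge is the step you yourself flag as the crux: transporting the hom-space condition for $g$ from pairs in the image of $f$ to arbitrary pairs $(y_0,y_1)$. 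You propose conjugation by the chosen isomorphisms $\phi_i\colon fx_i\lra y_i$, yielding a square that commutes only up to homotopy, and you then need both that this square homotopy-commutes and that being a homotopy $(n-1)$-equivalence is invariant under homotopy of maps. Both facts are true, but the first is exactly the piece of quasi-categorical bookkeeping that is tedious to make precise, and your proposal leaves it unverified. The paper avoids it entirely by a small device: it regards the pair $(u,v)$ of isomorphisms as a pair of objects of the quasi-category $Y^{(\Delta^1)}$ of isomorphisms in $Y$, and uses the fact that the two endpoint evaluations $Y^{(\delta^0)},Y^{(\delta^1)}\colon Y^{(\Delta^1)}\lra Y$ are equivalences of quasi-categories (by the Quillen adjunction (\ref{joyalresadj})) to produce a \emph{strictly} commutative diagram of hom-spaces whose vertical maps are genuine homotopy equivalences, connecting $g$ on $\Hom_Y(y_0,y_1)$ to $g$ on $\Hom_Y(fx_0,fx_1)$; Lemma \ref{homo23} then closes the argument with no homotopy-commutativity to check. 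Your route would work once observation (b) is spelled out, but the paper's path-object trick is the cleaner way to discharge precisely the obstacle you identified, and if you keep your approach you should actually prove (b) rather than defer it.
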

\begin{proof}
The class of categorical $0$-equivalences was defined as the class of morphisms of quasi-categories inverted by a functor, and therefore satisfies the stated property.

Note that by the functoriality of the hom-space construction, the composite morphism $gf \colon X \lra Z$ is given on hom-spaces by the composite morphism
\begin{equation} \label{componhoms}
\cd{
\Hom_X(x,x') \ar[r]^-f & \Hom_Y(fx,fx') \ar[r]^-g & \Hom_Z(gfx,gfx').
}
\end{equation}

Suppose $n\geq 1$.
We must consider three cases. In the first, suppose $f$ and $g$ are categorical $n$-equivalences. Since the class of essentially surjective on objects morphisms of quasi-categories and the class of homotopy $(n-1)$-equivalences of Kan complexes are both closed under composition (by Lemma \ref{homo23}), we have that the composite morphism $gf \colon X \lra Z$ is a categorical $n$-equivalence. 

In the second case, suppose that $g$ and $gf$ are categorical $n$-equivalences. To show that $f$ is essentially surjective on objects, it suffices to show that the functor $\ho(f) \colon \ho X \lra \ho Y$ is essentially surjective on objects. This follows from the assumptions  (which hold since $n \geq 1$) that the functor $\ho(gf)$ is essentially surjective on objects and that the functor $\ho(g)$ is fully faithful. Since $gf$ is given on hom-spaces by the composite (\ref{componhoms}), we have that $f$ is a homotopy $(n-1)$-equivalence on hom-spaces by Lemma \ref{homo23}.

In the third case, suppose that $f$ and $gf$ are categorical $n$-equivalences. Since $gf$ is essentially  surjective on objects, it follows that $g$ is essentially surjective on objects. To show that $g$ is a homotopy $(n-1)$-equivalence on hom-spaces, let $y,y'$ be a pair of objects of $Y$. Since $f$ is essentially surjective on objects, there exist objects $x,x' \in X$ and isomorphisms $u \colon fx \cong y$ and $v \colon fx' \cong y'$ in $Y$. Thus we have a commutative diagram of quasi-categories as on the left below in which the vertical morphisms are equivalences of quasi-categories (by the Quillen adjunction (\ref{joyalresadj})),
\begin{equation*}
\cd[@=3em]{
& Y \ar[r]^-{g} & Z \\
\partial\Delta^1 \ar[ur]^-{(y,y')} \ar[r]^-{(u,v)} \ar[dr]_-{(fx,fx')} & Y^{(\Delta^1)} \ar[u]_-{Y^{({\delta^0})}} \ar[d]^-{Y^{({\delta^1})}} \ar[r]^-{g^{(\Delta^1)}} & Z^{(\Delta^1)} \ar[u]_-{Z^{({\delta^0})}} \ar[d]^-{Z^{({\delta^1})}} \\
& Y \ar[r]_-{g} & Z
}
\qquad
\qquad
\cd[@=3em]{
\Hom_Y(y,y') \ar[r]^-{g} & \Hom_Z(gy,gy') \\
 \Hom_{Y^{(\Delta^1)}}(u,v) \ar[u]^-{Y^{({\delta^0})}} \ar[d]_-{Y^{({\delta^1})}} \ar[r]^-{g^{(\Delta^1)}} & \Hom_{Z^{(\Delta^1)}}(gu,gv) \ar[u]_-{Z^{({\delta^0})}} \ar[d]^-{Z^{({\delta^1})}} \\
\Hom_Y(fx,fx') \ar[r]_-{g} & \Hom_Z(gfx,gfx')
}
\end{equation*}
 and which therefore induces a commutative diagram of Kan complexes as on the right above in which the vertical morphisms are homotopy equivalences, and hence also homotopy $(n-1)$-equivalences. Hence, by Lemma \ref{homo23}, the morphism $g \colon \Hom_Y(y,y') \lra \Hom_Z(gy,gy')$ is a homotopy $(n-1)$-equivalence if and only if the morphism $g \colon \Hom_Y(fx,fx') \lra \Hom_Z(gfx,gfx')$ is a homotopy $(n-1)$-equivalence. But the latter morphism is a homotopy $(n-1)$-equivalence by Lemma \ref{homo23}, since the composite morphism (\ref{componhoms}) and its first factor are homotopy $(n-1)$-equivalences by assumption.
\end{proof}

By construction (\ref{susppo}), the suspension $\Sigma U$ of an $n$-skeletal simplicial set $U$ is $(n+1)$-skeletal (since it is a colimit of $(n+1)$-skeletal simplicial sets). Hence the $n$-skeleta of the hom-spaces $\Hom_X(x,y)$ of a quasi-category $X$ depend only on the $(n+1)$-skeleton of $X$. This implies that an $(n+1)$-bijective morphism of quasi-categories $f \colon X \lra Y$ induces $n$-bijective morphisms on hom-spaces $f \colon \Hom_X(x,y) \lra \Hom_Y(fx,fy)$. We may therefore deduce the following lemma from Lemma \ref{bijimpho}.

\begin{lemma} \label{bijequiv}
An $(n+1)$-bijective morphism of quasi-categories is a categorical $n$-equivalence.
\end{lemma}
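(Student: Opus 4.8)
The plan is to combine the skeleton-counting observation recorded in the paragraph immediately preceding the statement with Lemma \ref{bijimpho}, dealing with the essential-surjectivity condition separately and keeping an eye on the degenerate case $n=0$, where ``categorical $n$-equivalence'' is defined via the poset reflection functor rather than by Definition \ref{catnequiv}.

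First I would observe that, since $n \geq 0$, an $(n+1)$-bijective morphism $f \colon X \lra Y$ of quasi-categories is in particular $0$-bijective, so $f_0 \colon X_0 \lra Y_0$ is a bijection; hence $f$ is bijective on objects, and a fortiori the induced functor $\ho(f) \colon \ho X \lra \ho Y$ is essentially surjective on objects. When $n=0$, I would additionally note that bijectivity of $f_0$ means every object $z \in Y$ equals $fx$ for a (unique) $x \in X$, so the identity morphism of $z$ witnesses both arrows required in condition (i) of Remark \ref{cat0equiv}.

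Next, I would invoke the discussion preceding the statement, which already establishes — using that $\Sigma U$ is $(n+1)$-skeletal when $U$ is $n$-skeletal, so that a $k$-simplex of a hom-space with $k \leq n$ is a map out of an $(n+1)$-skeletal simplicial set — that an $(n+1)$-bijective morphism of quasi-categories induces $n$-bijective morphisms $f \colon \Hom_X(x,y) \lra \Hom_Y(fx,fy)$ on hom-spaces. Applying Lemma \ref{bijimpho} with ``$n$'' there replaced by ``$n-1$'' (legitimate since $n-1 \geq -1 \geq -2$), an $n$-bijective morphism of Kan complexes is a homotopy $(n-1)$-equivalence; hence each induced morphism on hom-spaces is a homotopy $(n-1)$-equivalence. (In the case $n=0$, this amounts to the $n=-1$ instance of Lemma \ref{bijimpho}: a $0$-bijective morphism of Kan complexes is a homotopy $(-1)$-equivalence.)

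Finally I would assemble the conclusion: for $n \geq 1$, the two clauses of Definition \ref{catnequiv} have been verified, so $f$ is a categorical $n$-equivalence; for $n=0$, conditions (i) and (ii) of Remark \ref{cat0equiv} have been verified, so $f$ is again a categorical $n$-equivalence. I do not expect any genuine obstacle here, since the substantive point — that the hom-spaces of a quasi-category depend only on a bounded skeleton of it — has already been isolated in the preamble to the statement; the only care needed is the bookkeeping of indices and the bifurcated definition of categorical $n$-equivalence at $n=0$.
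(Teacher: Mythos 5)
Your proposal is correct and follows essentially the same route as the paper's proof: deduce (essential) surjectivity on objects from $0$-bijectivity (handling $n=0$ via condition (i) of Remark \ref{cat0equiv}), and combine the $(n+1)$-skeletality of the suspension with Lemma \ref{bijimpho} (with the index shifted by one) to get homotopy $(n-1)$-equivalences on hom-spaces. Your explicit attention to the index shift and to the $n=0$ bifurcation is just a more careful spelling-out of what the paper leaves implicit.
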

\begin{proof}
Let $f \colon X \lra Y$ be an $(n+1)$-bijective morphism of quasi-categories. Then $f$ is a $0$-bijection, and hence in particular (essentially) surjective on objects (if $n=0$, note that this implies property (i) of Remark \ref{cat0equiv}). Furthermore, for each pair of objects $x,y$ of $X$, the induced morphism on hom-spaces $\Hom_X(x,y) \lra \Hom_{Y}(fx,fy)$ is $n$-bijective as above, and hence is a homotopy $(n-1)$-equivalence by Lemma \ref{bijimpho}. Therefore $f$ is a categorical $n$-equivalence.
\end{proof}

Following \cite[\S26.7]{joyalnotes}, define a \emph{categorical $n$-truncation} of a simplicial set $A$ to be a fibrant replacement of $A$ in the model structure for $n$-truncated quasi-categories, that is, an $n$-truncated quasi-category $X$ together with a weak categorical $n$-equivalence $A \lra X$. In the next two propositions, we will prove that the $(n+1)$-coskeleton of a quasi-category is a model for its categorical $n$-truncation (cf.\ \cite[\S1]{MR0245577} or \cite[\S9.1]{MR2294028}, where the $(n+1)$-coskeleton of a Kan complex is given as a model for its $n$th Postnikov truncation). We will then use these results to prove the main theorem of this section.

\begin{proposition} \label{coskqcat}
Let $X$ be a quasi-category. Then its $(n+1)$-coskeleton $\cosk_{n+1}X$ is an $n$-truncated quasi-category, and the unit morphism $X \lra \cosk_{n+1}X$ is a categorical $n$-equivalence.
\end{proposition}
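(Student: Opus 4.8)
The plan is to verify the two assertions in turn, using throughout the lifting-property characterisation of $n$-truncated quasi-categories (Proposition~\ref{truncisacyc}), which says that a quasi-category is $n$-truncated precisely when it is $(n+1)$-acyclic. The first observation is that $\cosk_{n+1}X$ is $(n+1)$-acyclic regardless of $X$: since $\cosk_{n+1}X$ is $(n+1)$-coskeletal, the unit $\cosk_{n+1}X \lra \cosk_{n+1}(\cosk_{n+1}X)$ is an isomorphism, hence a trivial fibration, so $\cosk_{n+1}X$ is $(n+1)$-acyclic by Lemma~\ref{acyccosk}. It therefore remains, for the first assertion, to show that $\cosk_{n+1}X$ is a quasi-category, i.e.\ has the right lifting property against every inner horn inclusion $\Lambda^m_k \lra \Delta^m$.

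I would split this lifting problem into three ranges of $m$. When $m \geq n+3$, the inclusion $\Lambda^m_k \lra \Delta^m$ is an $(n+1)$-bijective monomorphism, because the horn and the simplex coincide through dimension $m-2 \geq n+1$; the lifting property then holds by the $(n+1)$-acyclicity just established together with Lemma~\ref{satcor}. When $m \leq n+1$, the $(n+1)$-skeleton functor fixes both $\Lambda^m_k$ and $\Delta^m$, so by transposing along the adjunction $\sk_{n+1} \dashv \cosk_{n+1}$ a lifting problem against $\cosk_{n+1}X$ becomes the corresponding lifting problem against $X$, which is solvable since $X$ is a quasi-category.

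The remaining case $m = n+2$ is the crux and, I expect, the only delicate point. Here $\sk_{n+1}\Lambda^{n+2}_k = \Lambda^{n+2}_k$ while $\sk_{n+1}\Delta^{n+2} = \partial\Delta^{n+2}$, so transposing along $\sk_{n+1} \dashv \cosk_{n+1}$ shows that filling the horn $\Lambda^{n+2}_k \lra \cosk_{n+1}X$ amounts to extending the associated map $\Lambda^{n+2}_k \lra X$ along the inclusion $\Lambda^{n+2}_k \hookrightarrow \partial\Delta^{n+2}$, which is \emph{not} an inner-horn extension problem. The way around this is to exploit that $\Lambda^{n+2}_k \hookrightarrow \Delta^{n+2}$ \emph{is} an inner horn inclusion: one first fills $\Lambda^{n+2}_k \lra X$ to a full simplex $\Delta^{n+2} \lra X$ (using that $X$ is a quasi-category), and then restricts along $\partial\Delta^{n+2} \hookrightarrow \Delta^{n+2}$; since $\Lambda^{n+2}_k \subseteq \partial\Delta^{n+2}$, the resulting map $\partial\Delta^{n+2} \lra X$ still restricts to the given horn, and under the transposition it provides the desired filler.

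Combining these cases with Proposition~\ref{truncisacyc} establishes that $\cosk_{n+1}X$ is an $n$-truncated quasi-category. For the second assertion, I would note that the unit $X \lra \cosk_{n+1}X$ is an isomorphism in every dimension $\leq n+1$ (a standard property of the coskeleton), hence is an $(n+1)$-bijective morphism in the sense of Definition~\ref{nbijdef}; as both its source and target are quasi-categories, Lemma~\ref{bijequiv} immediately gives that it is a categorical $n$-equivalence, completing the proof.
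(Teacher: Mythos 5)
Your proof is correct and follows essentially the same route as the paper's: the three-way case split on $m$ via the adjunction $\sk_{n+1} \dashv \cosk_{n+1}$, the identical treatment of the crux case $m = n+2$ (fill the inner horn in $X$ and restrict to the boundary), the idempotency of $\cosk_{n+1}$ for truncatedness, and Lemma~\ref{bijequiv} for the unit. The only cosmetic difference is in the range $m \geq n+3$, where the paper simply notes that $\sk_{n+1}(h^m_k)$ is an isomorphism rather than invoking $(n+1)$-acyclicity and Lemma~\ref{satcor}; both arguments work.
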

\begin{proof}
First, to prove that $\cosk_{n+1}X$ is a quasi-category, it is required to prove that it has the right lifting property with respect to the inner horn inclusion $h_m^k \colon \Lambda^m_k \lra \Delta^m$ for every $m \geq 2$ and $0 < k < m$. By adjointness, this is so if and only if $X$ has the right lifting property with respect to the morphism $\sk_{n+1}(h_m^k) \colon \sk_{n+1}\Lambda^m_k \lra \sk_{n+1}\Delta^m$. 
Consider the following three cases. If $m \leq n+1$, then the morphism $\sk_{n+1}(h_m^k)$ is the inner horn inclusion $h_m^k$, with respect to which $X$ has the right lifting property since it is a quasi-category. If $m = n+2$, then the morphism $\sk_{n+1}(h_m^k)$ is isomorphic to the inclusion $\Lambda^m_k \lra \partial\Delta^m$, with respect to which $X$ has the right lifting property, since it has this property with respect to the composite $\Lambda^m_k \lra \partial\Delta^m \lra \Delta^m$, since it is a quasi-category. If $m > n+2$, then the morphism $\sk_{n+1}(h^m_k)$ is an isomorphism, with respect to which therefore $X$ has the unique right lifting property. 

Next, to show that the quasi-category $\cosk_{n+1}X$ is $n$-truncated, it suffices to observe that the unit morphism $\cosk_{n+1}X \lra \cosk_{n+1}\cosk_{n+1}X$ is an isomorphism (since $\cosk_{n+1}$ is an idempotent monad), for then $\cosk_{n+1}$ is $n$-truncated by Corollary \ref{qunittrivfib}.

Finally, since the unit morphism $X \lra \cosk_{n+1}X$ is an $(n+1)$-bijective morphism of quasi-categories, it is a categorical $n$-equivalence by Lemma \ref{bijequiv}. 
 \end{proof}

 Let $J = k([1])$ denote the nerve of the ``free-living isomorphism'', i.e.\ the nerve of the groupoid reflection of the ordered set $\{ 0< 1\}$. 
By \cite[Proposition 6.18]{joyalbarcelona},
 for any simplicial set $A$ and quasi-category $X$, the hom-set $(\Ho \sSet_\mathrm{J})(A,X)$ is in bijection with the set of $J$-homotopy classes of morphisms $A \lra X$, where two such morphisms $f,g$ belong to the same $J$-homotopy class if and only if there exists a morphism $h \colon J \times A \lra X$ such that $h \circ (\{0\} \times \mathrm{id}) = f$ and $h \circ (\{1\}\times\mathrm{id}) = g$.

\begin{proposition} \label{unitwe}
Let $A$ be a simplicial set. Then the unit morphism $A \lra \cosk_{n+1}A$ is a weak categorical $n$-equivalence. 
\end{proposition}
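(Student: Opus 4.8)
The plan is to verify the characterisation of weak categorical $n$-equivalences given by Lemma \ref{wkcatequiv}: it must be shown that for every $n$-truncated quasi-category $Y$, precomposition with the unit $u_A \colon A \lra \cosk_{n+1}A$ induces a bijection $(\Ho\sSet_\mathrm{J})(\cosk_{n+1}A, Y) \lra (\Ho\sSet_\mathrm{J})(A, Y)$. Since $Y$ is a quasi-category, by the description recalled just before this proposition (\cite[Proposition 6.18]{joyalbarcelona}) each such hom-set is the set of $J$-homotopy classes of morphisms into $Y$, so the task becomes a concrete statement about simplicial sets and $J$-homotopies.

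The first move is to replace $Y$ by its $(n+1)$-coskeleton. As $Y$ is $n$-truncated it is $(n+1)$-acyclic by Proposition \ref{truncisacyc}, so by Corollary \ref{qunittrivfib} the unit $v \colon Y \lra \cosk_{n+1}Y$ is a trivial fibration, hence a weak categorical equivalence, and $\cosk_{n+1}Y$ is itself a quasi-category by Proposition \ref{coskqcat}. Therefore postcomposition with $v$ is a bijection $(\Ho\sSet_\mathrm{J})(B, Y) \cong (\Ho\sSet_\mathrm{J})(B, \cosk_{n+1}Y)$ for every simplicial set $B$, and the naturality square relating the two copies of $u_A^*$ (with vertical maps $v_*$, which are bijections) shows that it suffices to prove that $u_A^* \colon (\Ho\sSet_\mathrm{J})(\cosk_{n+1}A, \cosk_{n+1}Y) \lra (\Ho\sSet_\mathrm{J})(A, \cosk_{n+1}Y)$ is a bijection. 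I expect this reduction to be the step requiring the most care: one cannot argue with $Y$ directly, since $u_A$ need not be a monomorphism and $Y$ is not $(n+1)$-coskeletal, so the map $\Hom_{\sSet}(\cosk_{n+1}A, Y) \lra \Hom_{\sSet}(A, Y)$ fails to be injective in general; passing to the ``rigid'' object $\cosk_{n+1}Y$, which is made possible by Corollary \ref{qunittrivfib}, is what makes the remaining argument strict.

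What is left is formal. By the adjunction $\sk_{n+1} \dashv \cosk_{n+1}$ between endofunctors of $\sSet$, one has a natural isomorphism $\Hom_{\sSet}(B, \cosk_{n+1}Y) \cong \Hom_{\sSet}(\sk_{n+1}B, Y)$, under which $u_A^*$ becomes restriction along $\sk_{n+1}(u_A) \colon \sk_{n+1}A \lra \sk_{n+1}\cosk_{n+1}A$. Now $u_A$ is an isomorphism on $k$-simplices for each $k \leq n+1$ by the very definition of the $(n+1)$-coskeleton — that is, $u_A$ is $(n+1)$-bijective — and since $\sk_{n+1}$ factors through restriction to the $(n+1)$-truncated simplices, $\sk_{n+1}(u_A)$ is an isomorphism; hence $u_A^*$ is already a bijection on $\Hom_{\sSet}(-, \cosk_{n+1}Y)$. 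Running the identical argument with $J \times (-)$ in place of $(-)$ — the morphism $\id_J \times u_A$ is again $(n+1)$-bijective — produces a bijection $\Hom_{\sSet}(J \times \cosk_{n+1}A, \cosk_{n+1}Y) \cong \Hom_{\sSet}(J \times A, \cosk_{n+1}Y)$ compatible, via the two endpoint inclusions $\{0\}, \{1\}$, with the previous one; the two compatible bijections then descend to a bijection on sets of $J$-homotopy classes, which completes the proof. (This argument in fact shows more generally that every $(n+1)$-bijective morphism of simplicial sets is a weak categorical $n$-equivalence.)
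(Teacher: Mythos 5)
Your proof is correct and follows essentially the same route as the paper's: reduce to an $(n+1)$-coskeletal target via Lemma \ref{wkcatequiv} and Corollary \ref{qunittrivfib}, then exploit coskeletality to identify morphisms (and $J$-homotopies) out of $\cosk_{n+1}A$ with those out of $A$. The only cosmetic difference is that you obtain the final step as a strict bijection of hom-sets via the adjunction $\sk_{n+1} \dashv \cosk_{n+1}$ together with the observation that $\sk_{n+1}(u_A)$ is an isomorphism, where the paper instead constructs the required extensions and homotopies explicitly by applying the functor $\cosk_{n+1}$.
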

\begin{proof}
Let $\eta_A \colon A \lra \cosk_{n+1}A$ denote the unit morphism in question. By Lemma \ref{wkcatequiv}, it is required to prove that the function 
\begin{equation*} \label{hofun}
(\Ho\sSet_\mathrm{J})(\eta_A,X) : (\Ho\sSet_\mathrm{J})(\cosk_{n+1}A,X) \lra (\Ho\sSet_\mathrm{J})(A,X)
\end{equation*}
is a bijection for each $n$-truncated quasi-category $X$, which, without loss of generality,  we may assume to be $(n+1)$-coskeletal by Lemma \ref{qunittrivfib}.

Let $X$ be an $(n+1)$-coskeletal quasi-category. To show that the function displayed above is injective, let $f,g \colon \cosk_{n+1}A \lra X$ be a pair of morphisms of simplicial sets, and let $h \colon J\times A \lra X$ be a $J$-homotopy from $f\eta_A$ to $g\eta_A$. Then the morphism
\begin{equation*}
\cd[@C=4em]{
J \times \cosk_{n+1}A \cong  \cosk_{n+1}(J \times A) \ar[r]^-{\cosk_{n+1}(h)} & \cosk_{n+1}X \cong X
}
\end{equation*}
defines a $J$-homotopy from $f$ to $g$ (where we have used that the functor $\cosk_{n+1}$ preserves products and that $J$ is $0$-coskeletal). Hence the function is injective. To show that it is surjective, let $f \colon X \lra Y$ be a morphism of simplicial sets. Then the morphism 
\begin{equation*}
\cd[@C=4em]{
\cosk_{n+1}X \ar[r]^-{\cosk_{n+1}(f)} & \cosk_{n+1}Y \cong Y
}
\end{equation*}
defines an extension of $f$ along the unit morphism $\eta_A$. Hence the function is surjective, and is therefore a bijection.
\end{proof}

We are now ready to prove the main theorem of this section.

\begin{theorem}[{\cite[\S26.6]{joyalnotes}}] \label{wethm}
A morphism of quasi-categories is a weak categorical $n$-equivalence if and only if it is a categorical $n$-equivalence.
\end{theorem}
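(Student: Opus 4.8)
The plan is to reduce the statement to the comparison already established for morphisms between $n$-truncated quasi-categories (Lemma~\ref{coskequiv}) by passing to the $(n+1)$-coskeleton. Given a morphism of quasi-categories $f \colon X \lra Y$, I would consider the naturality square for the unit morphisms $X \lra \cosk_{n+1}X$:
\begin{equation*}
\cd{
X \ar[r]^-f \ar[d]_-{\eta_X} & Y \ar[d]^-{\eta_Y} \\
\cosk_{n+1}X \ar[r]_-{\cosk_{n+1}f} & \cosk_{n+1}Y.
}
\end{equation*}
By Proposition~\ref{coskqcat}, the simplicial sets $\cosk_{n+1}X$ and $\cosk_{n+1}Y$ are $n$-truncated quasi-categories and the two vertical morphisms are categorical $n$-equivalences; by Proposition~\ref{unitwe}, these same vertical morphisms are moreover weak categorical $n$-equivalences.

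The first step is then a two-out-of-three argument on the weak-equivalence side: since the weak categorical $n$-equivalences are by definition the weak equivalences of a model structure, they are closed under composition and satisfy two-out-of-three, so the square together with the fact that $\eta_X$ and $\eta_Y$ are weak categorical $n$-equivalences shows that $f$ is a weak categorical $n$-equivalence if and only if $\cosk_{n+1}f$ is. The second step is the symmetric argument for categorical $n$-equivalences: all four morphisms in the square are morphisms of quasi-categories, so Lemma~\ref{twothree} applies, and combined with the fact that $\eta_X$ and $\eta_Y$ are categorical $n$-equivalences it shows that $f$ is a categorical $n$-equivalence if and only if $\cosk_{n+1}f$ is.

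The third step closes the loop: $\cosk_{n+1}f$ is a morphism between $n$-truncated quasi-categories, so Lemma~\ref{coskequiv} tells us it is a weak categorical $n$-equivalence precisely when it is a categorical $n$-equivalence. Chaining these three equivalences then yields the theorem.

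I do not expect a genuine obstacle at the level of the theorem itself: all the substantive work has been front-loaded into the preparatory results. The point on which the reduction genuinely depends is that the coskeleton unit $A \lra \cosk_{n+1}A$ must be \emph{simultaneously} a categorical $n$-equivalence and a weak categorical $n$-equivalence, which is exactly what Propositions~\ref{coskqcat} and~\ref{unitwe} jointly deliver; of these, Proposition~\ref{unitwe} (whose proof leans on the description of $\Ho\sSet_\mathrm{J}$-morphisms as $J$-homotopy classes) carries the real weight, with Lemma~\ref{coskequiv} supplying the identification of the three notions on $n$-truncated objects.
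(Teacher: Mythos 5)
Your proposal is correct and follows essentially the same route as the paper's own proof: the naturality square for the coskeleton unit, the simultaneous use of Propositions~\ref{coskqcat} and~\ref{unitwe} to make the vertical morphisms both kinds of equivalence, two-out-of-three on each side (using Lemma~\ref{twothree} for the categorical case), and Lemma~\ref{coskequiv} to identify the two notions for the morphism $\cosk_{n+1}(f)$ between $n$-truncated quasi-categories. No gaps.
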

\begin{proof}
This statement is true of morphisms of $n$-truncated quasi-categories by Lemma \ref{coskequiv}. 
Let $f \colon X \lra Y$ be a morphism of quasi-categories. In the commutative diagram displayed below,
\begin{equation*}
\cd{
X \ar[rr]^-{f} \ar[d] && Y \ar[d] \\
\cosk_{n+1}X \ar[rr]_-{\cosk_{n+1}(f)} && \cosk_{n+1}Y
}
\end{equation*}
the vertical morphisms are weak categorical $n$-equivalences by Proposition \ref{unitwe} and categorical $n$-equivalences by Proposition \ref{coskqcat}, and the bottom morphism is a morphism of $n$-truncated quasi-categories by Proposition \ref{coskqcat}. Since the class of weak categorical $n$-equivalences and the class of categorical $n$-equivalences both satisfy the two-of-three property (the one since it is the class of weak equivalences of a model category by definition, the other by Lemma \ref{twothree}), it follows that $f$ inherits from $\cosk_{n+1}(f)$ the property that it is a weak categorical $n$-equivalence if and only if it is a categorical $n$-equivalence.
\end{proof}

\begin{remark}
In \cite[\S26.6]{joyalnotes}, it is incorrectly stated that a morphism of quasi-categories is a (weak) categorical $0$-equivalence if and only if it is essentially surjective on objects and a homotopy $(-1)$-equivalence on hom-spaces. This statement can be corrected by replacing the property ``essentially surjective on objects'' by the weaker property (i) in Remark \ref{cat0equiv}.  For a counterexample, let $C$ be the category freely generated by the graph displayed below,
\begin{equation*}
\cd{
\bullet \ar@<1ex>[r] & \bullet \ar@<1ex>[l]
}
\end{equation*}
and let $1 \lra C$ be the functor corresponding to either of the two objects of $C$. This functor is not essentially surjective on objects, but its poset reflection is an isomorphism. Hence the nerve of this functor is an example of a categorical $0$-equivalence that is not essentially surjective on objects.
\end{remark}

\section{Some Quillen equivalences} \label{secquillen}
In this final section, we use the criteria proved at the end of Appendix \ref{secbousfield} to prove Quillen equivalences between the model categories of categories and $1$-truncated quasi-categories and between the model categories of $n$-truncated quasi-categories and Rezk's $(n,1)$-$\Theta$-spaces. 

To begin, recall that the adjunction $\tau_1 \dashv N \colon \Cat \lra \sSet$ (\ref{nerveadj}),
whose right adjoint sends a category $A$ to its nerve $NA$ and whose left adjoint sends a simplicial set $X$ to its fundamental category $\tau_1X$, is a Quillen adjunction, and moreover a homotopy reflection (i.e.\ its derived right adjoint is fully faithful), between the folk model structure for categories (whose weak equivalences are the equivalences of categories) and Joyal's model structure for quasi-categories \cite[Proposition 6.14]{joyalbarcelona}. Using Theorem \ref{qethm1} and the results of \S\ref{sectruncated}, we can show that this adjunction is moreover a Quillen equivalence between the folk model structure for categories  and the model structure for $1$-truncated quasi-categories.

\begin{theorem} \label{cat1qcat}
The adjunction
\begin{equation*}
\xymatrix{
\Cat \ar@<-1.5ex>[rr]^-{\hdash}_-{N} && \ar@<-1.5ex>[ll]_-{\tau_1} \sSet
}
\end{equation*}
 is a Quillen equivalence between the folk model structure for categories and the model structure for $1$-truncated quasi-categories. 
\end{theorem}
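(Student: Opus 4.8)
The plan is to deduce this from the fact, due to Joyal \cite[Proposition 6.14]{joyalbarcelona}, that $\tau_1 \dashv N$ is a homotopy reflection between the folk model structure for categories and the model structure $\sSet_\mathrm{J}$ for quasi-categories, by applying one of the criteria of Theorem \ref{qethm1} to the Bousfield localisation of $\sSet_\mathrm{J}$ at the boundary inclusion $\partial\Delta^{3} \lra \Delta^{3}$, which by Theorem \ref{newthm} is precisely the model structure for $1$-truncated quasi-categories. That $\tau_1 \dashv N$ remains a Quillen adjunction with this localised codomain is immediate, since the identity functor from $\sSet_\mathrm{J}$ to its Bousfield localisation is left Quillen and a composite of left Quillen functors is left Quillen; the content is that it becomes a Quillen \emph{equivalence}.

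By the criterion of Theorem \ref{qethm1}, this reduces to two verifications. First, the derived left adjoint $\mathbf{L}\tau_1$ must send the localising morphism $\partial\Delta^{3} \lra \Delta^{3}$ to a weak equivalence in the folk model structure. Since every simplicial set is cofibrant, $\mathbf{L}\tau_1$ is computed by $\tau_1$; and since $\partial\Delta^{3}$ already contains every $2$-simplex of $\Delta^{3}$, the composition relations imposed on $\tau_1(\partial\Delta^{3})$ agree with those on $\tau_1(\Delta^{3})$, so that $\tau_1(\partial\Delta^{3} \lra \Delta^{3})$ is in fact an isomorphism (both categories being the poset $[3]$), in particular an equivalence of categories. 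Second, every fibrant object of the model structure for $1$-truncated quasi-categories — that is, every $1$-truncated quasi-category $X$ — must be weakly equivalent to an object in the image of $N$; this is exactly the content of Proposition \ref{catprop}, which says that the unit $X \lra N(\ho X)$ is an equivalence of quasi-categories, hence a fortiori a weak categorical $1$-equivalence, exhibiting $X$ as weakly equivalent to the nerve of the category $\ho X$.

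The step requiring the most care — and where Theorem \ref{qethm1} does its work — is checking that the homotopy-reflection property is inherited by the localised adjunction, i.e.\ that the derived right adjoint $\mathbf{R}N$ remains fully faithful as a functor into the homotopy category of the localised model structure. The point is that $NA$ is a $1$-truncated quasi-category and hence, by Proposition \ref{localchar}, a local object of $\sSet_\mathrm{J}$ with respect to $\partial\Delta^{3} \lra \Delta^{3}$; consequently the derived mapping space $\underline{\Ho\sSet_\mathrm{J}}(B, NA)$ is unchanged upon passing to the Bousfield localisation, and the homotopy-reflection property of $\tau_1 \dashv N$ in $\sSet_\mathrm{J}$ transfers to the localisation. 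With full faithfulness thus preserved and essential surjectivity supplied by Proposition \ref{catprop}, the derived functor $\mathbf{R}N$ is an equivalence of homotopy categories, so $\tau_1 \dashv N$ is the required Quillen equivalence.
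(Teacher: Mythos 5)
Your overall strategy is the paper's: apply Theorem \ref{qethm1} to the homotopy reflection $\tau_1 \dashv N$ and the Bousfield localisation of $\sSet_\mathrm{J}$ at $\partial\Delta^3 \lra \Delta^3$, with Proposition \ref{catprop} supplying the substance. However, your justification that the localised adjunction ``remains a Quillen adjunction \ldots\ is immediate, since \ldots\ a composite of left Quillen functors is left Quillen'' is wrong in direction. The localisation lives on $\sSet$, which is the \emph{domain} of the left adjoint $\tau_1$ (and the codomain of the right adjoint $N$); the identity $\sSet_\mathrm{J} \lra \sSet_\mathrm{loc}$ is left Quillen, but there is no way to compose it with $\tau_1 \colon \sSet_\mathrm{J} \lra \Cat$ to produce a left Quillen functor $\sSet_\mathrm{loc} \lra \Cat$ --- that is precisely the thing to be proved, and it amounts to showing that $\tau_1$ inverts the new (larger) class of trivial cofibrations. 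The correct route is either the paper's Proposition \ref{quillenprop}, which reduces it to checking that $N$ sends every category to a fibrant object of the localisation (i.e.\ a $1$-truncated quasi-category, which is Proposition \ref{catprop}), or your own observation that $\tau_1(\partial\Delta^3 \lra \Delta^3)$ is an isomorphism together with a citable localisation result such as \cite[Proposition 3.3.18]{MR1944041} --- but not a composition of left Quillen functors.

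Relatedly, the ``two verifications'' you extract from Theorem \ref{qethm1} are not its stated conditions. Condition (i) of that theorem is that $N$ sends each category to a $1$-truncated quasi-category; your first verification (that $\mathbf{L}\tau_1$ inverts the localising morphism) is a true and useful fact, but it belongs to a different criterion. You do in effect establish condition (i) in your third paragraph, where you note that $NA$ is $1$-truncated and hence local, so no needed fact is actually missing; but the logical organisation conflates two different localisation criteria, and the third paragraph essentially re-proves the full-faithfulness half of Theorem \ref{qethm1} inline rather than simply invoking it. Once conditions (i) and (ii) are correctly identified, both follow in one line from Proposition \ref{catprop}, which is all the paper's proof does.
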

\begin{proof}
By Theorem \ref{qethm1}, we must prove that the nerve of a category is a $1$-truncated quasi-category, and that, for any $1$-truncated quasi-category $X$, the unit morphism $X \lra N(\tau_1 X)$ is an equivalence of quasi-categories. These both follow from Proposition \ref{catprop}. 
\end{proof}

\begin{corollary}[{\cite[\S26.6]{joyalnotes}}] \label{wk1qcateq}
A morphism of simplicial sets is a weak categorical $1$-equivalence if and only if it sent by the functor $\tau_1 \colon \sSet \lra \Cat$ to an equivalence of categories.
\end{corollary}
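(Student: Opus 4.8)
The plan is to obtain this as a formal consequence of the Quillen equivalence of Theorem \ref{cat1qcat}, using two applications of Ken Brown's lemma. The relevant structural facts are that in the model structure for $1$-truncated quasi-categories every object is cofibrant (since the cofibrations are the monomorphisms), and that in the folk model structure on $\Cat$ every object is both fibrant and cofibrant.

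For the ``only if'' direction, I would argue as follows. The functor $\tau_1$ is the left adjoint of the Quillen equivalence of Theorem \ref{cat1qcat}, hence a left Quillen functor from the model structure for $1$-truncated quasi-categories to the folk model structure; by Ken Brown's lemma it preserves weak equivalences between cofibrant objects, and since every simplicial set is cofibrant, it sends every weak categorical $1$-equivalence to an equivalence of categories.

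For the ``if'' direction, let $f \colon A \lra B$ be a morphism of simplicial sets such that $\tau_1 f$ is an equivalence of categories. First I would observe that, for every simplicial set $A$, the unit morphism $\eta_A \colon A \lra N\tau_1 A$ is a weak categorical $1$-equivalence: this is the derived unit of the Quillen equivalence of Theorem \ref{cat1qcat} at the cofibrant object $A$, which is a weak equivalence because the adjunction is a Quillen equivalence, and it coincides with the ordinary unit morphism because $\tau_1 A$ is already fibrant in $\Cat$ and so requires no fibrant replacement. Applying naturality of $\eta$ to $f$ produces a commutative square with top edge $f$, bottom edge $N\tau_1 f$, and vertical edges $\eta_A$ and $\eta_B$; by the two-out-of-three property for weak categorical $1$-equivalences, $f$ is a weak categorical $1$-equivalence if and only if $N\tau_1 f$ is. Finally, since $\tau_1 A$ and $\tau_1 B$ are fibrant in $\Cat$, Ken Brown's lemma applied to the right Quillen functor $N$ of Theorem \ref{cat1qcat} shows that $N\tau_1 f$, being the image under $N$ of an equivalence of categories between fibrant objects, is a weak categorical $1$-equivalence; hence so is $f$.

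No step here presents a genuine difficulty; the only point calling for a little care is the identification of the derived unit of the Quillen equivalence with the plain unit morphism $A \lra N\tau_1 A$, which I would justify by noting that $\tau_1 A$ is fibrant in the folk model structure. I would also remark on an alternative, perhaps more hands-on route: one can first reduce to the case of a morphism of quasi-categories by choosing fibrant replacements in $\sSet_\mathrm{J}$ (using that $\tau_1$ preserves weak categorical equivalences, again by Ken Brown's lemma, this time for $\sSet_\mathrm{J}$), and then combine Theorem \ref{wethm} with the earlier proposition identifying the categorical $1$-equivalences of quasi-categories as the functors sent by $\tau_1$ to equivalences of categories.
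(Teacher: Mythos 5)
Your argument is correct and is essentially the paper's: the paper simply cites the standard fact that the left adjoint of a Quillen equivalence preserves and reflects weak equivalences between cofibrant objects (applicable since every simplicial set is cofibrant), and your unit-square plus two-out-of-three plus Ken Brown argument is exactly the standard proof of that fact, specialised to $\tau_1 \dashv N$. The only difference is that you unpack the citation; the identification of the derived unit with the plain unit via fibrancy of $\tau_1 A$ in $\Cat$ is handled correctly.
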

\begin{proof}
Since the functor $\tau_1$ is the left adjoint of a Quillen equivalence by Theorem \ref{cat1qcat}, and since every simplicial set is cofibrant in the model structure for $1$-truncated quasi-categories, this is an instance of the fact that the left adjoint of a Quillen equivalence preserves and reflects weak equivalences between cofibrant objects.
\end{proof}

Recall the adjunction $\tau_p \dashv N \colon \mathbf{Pos} \lra \sSet$ (\ref{posadj}), whose fully faithful right adjoint sends a poset to its nerve, and whose left adjoint sends a simplicial set to the poset reflection of its fundamental category. We now show that this adjunction is a Quillen equivalence between the trivial model structure  (i.e.\ the unique model structure whose weak equivalences are the isomorphisms) on the category of posets and the model structure for $0$-truncated quasi-categories.

\begin{theorem} \label{cat0qcat}
The adjunction
\begin{equation*}
\xymatrix{
\mathbf{Pos} \ar@<-1.5ex>[rr]^-{\hdash}_-{N} && \ar@<-1.5ex>[ll]_-{\tau_p} \sSet
}
\end{equation*}
is a Quillen equivalence between the trivial model structure for posets and the model structure for $0$-truncated quasi-categories. 
\end{theorem}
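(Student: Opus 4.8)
The plan is to imitate the proof of Theorem \ref{cat1qcat}, applying the criterion of Theorem \ref{qethm1} to the composite adjunction $\tau_p \dashv N$, regarded as an adjunction between the trivial model structure for posets and Joyal's model structure $\sSet_\mathrm{J}$, whose Bousfield localisation with respect to the boundary inclusion $\partial\Delta^2 \lra \Delta^2$ is the model structure for $0$-truncated quasi-categories (Theorem \ref{newthm}).

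First I would verify the hypotheses of Theorem \ref{qethm1}, namely that $\tau_p \dashv N$ is a Quillen adjunction and a homotopy reflection between the trivial model structure for posets and $\sSet_\mathrm{J}$. Recall (see (\ref{posadj})) that this adjunction is the composite of the folk Quillen adjunction $\tau_1 \dashv N \colon \Cat \lra \sSet$ with the reflective adjunction between $\mathbf{Pos}$ and $\Cat$, whose right adjoint is the inclusion $\iota \colon \mathbf{Pos} \lra \Cat$. Since $\tau_1 \dashv N$ is a Quillen adjunction and a homotopy reflection \cite[Proposition 6.14]{joyalbarcelona}, and since a composite of right Quillen functors is right Quillen with right derived functor the composite of the right derived functors (so that a composite of homotopy reflections is again a homotopy reflection), it suffices to show that $\iota$ is a right Quillen functor from the trivial model structure for posets to the folk model structure for categories and that the functor $\mathbf{Pos} = \Ho\mathbf{Pos} \lra \Ho\Cat$ it induces on homotopy categories is fully faithful. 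Now $\iota$ is right Quillen because every functor between posets is an isofibration of categories (a poset has no non-identity isomorphisms) and every isomorphism of posets is a surjective equivalence, i.e.\ a trivial fibration in the folk model structure; and $\iota$ induces a fully faithful functor on homotopy categories because for any posets $P$ and $Q$ the functor category $[P,Q]$ is again a poset, so that the set of natural-isomorphism classes of functors $P \lra Q$ is simply $\mathbf{Pos}(P,Q)$. (The trivial model structure on $\mathbf{Pos}$ is combinatorial since $\mathbf{Pos}$ is locally presentable, so all the hypotheses of Theorem \ref{qethm1} hold.)

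By Theorem \ref{qethm1} it then remains to prove two statements: that the nerve of a poset is a $0$-truncated quasi-category, and that for every $0$-truncated quasi-category $X$ the unit morphism $X \lra N(\tau_p X)$ is an equivalence of quasi-categories. The first is immediate from Proposition \ref{posetprop}, since every poset is a preorder. For the second, observe that this unit morphism factors as $X \lra N(\ho X) \lra N(\tau_p X)$, where the first arrow is the unit of $\tau_1 \dashv N$ at $X$ and the second is the nerve of the reflection functor $\ho X \lra \tau_p X$ exhibiting $\tau_p X$ as the poset reflection of $\ho X$ (using that $\tau_1 X \cong \ho X$ for a quasi-category $X$). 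Since $X$ is $0$-truncated it is in particular $1$-truncated, so the first arrow is an equivalence of quasi-categories by Proposition \ref{catprop}; and since $\ho X$ is a preorder by Proposition \ref{posetprop}, the reflection functor $\ho X \lra \tau_p X$ is an equivalence of categories (a preorder is equivalent to its poset quotient), so its nerve is an equivalence of quasi-categories. Hence, by the two-of-three property, the composite, which is the unit morphism, is an equivalence of quasi-categories, as required.

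The main obstacle is the bookkeeping of the second paragraph: confirming that $\tau_p \dashv N$ satisfies the hypotheses of Theorem \ref{qethm1}, and in particular that it is a homotopy reflection (equivalently, that $\mathbf{Pos}$ embeds fully faithfully into $\Ho\sSet_\mathrm{J}$ via the nerve). With that in hand, the two remaining points reduce, exactly as in the proof of Theorem \ref{cat1qcat}, to the structural results of \S\ref{sectruncated} --- Propositions \ref{catprop} and \ref{posetprop}.
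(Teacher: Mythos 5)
Your proof is correct and follows essentially the same route as the paper: both apply Theorem \ref{qethm1} to the adjunction $\tau_p \dashv N$ viewed against the Joyal model structure, and verify conditions (i) and (ii) using Propositions \ref{posetprop} and \ref{catprop}. If anything you are slightly more scrupulous than the paper's own proof, which checks the Quillen adjunction by observing that $\tau_p$ inverts weak categorical equivalences rather than by factoring through $\Cat$, leaves the homotopy reflection hypothesis implicit, and elides the final passage from the preorder $\ho X$ to its poset quotient that you spell out via the factorisation $X \lra N(\ho X) \lra N(\tau_p X)$.
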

\begin{proof}
To see that this adjunction is a Quillen adjunction between the trivial model structure for posets and the model structure for quasi-categories, it suffices to observe that each weak categorical equivalence is sent by the functor $\tau_p$ to an isomorphism of posets. But this functor is the composite of the functor $\tau_1 \colon \sSet \lra \Cat$, which sends each weak categorical equivalence to an equivalence of categories, and the poset reflection functor $\tau_p \colon \Cat \lra \mathbf{Pos}$, which is easily shown to invert equivalences of categories. 

It remains to verify conditions (i) and (ii) of Theorem \ref{qethm1}. Firstly, by Proposition \ref{posetprop}, the nerve $NA$ of a poset $A$ is a $0$-truncated quasi-category, which verifies condition (i). Secondly, a $0$-truncated quasi-category $X$ is in particular $1$-truncated, and so by Proposition \ref{catprop} the unit morphism $X \lra N(\ho X)$ is an equivalence of quasi-categories. But by Proposition \ref{posetprop}, $\ho X$ is a preorder and hence $N(\ho X)$ is a $0$-truncated quasi-category. This verifies condition (ii). 
\end{proof}

\begin{corollary}[{\cite[\S26.6]{joyalnotes}}] \label{wk0qcateq}
A morphism of simplicial sets is a weak categorical $0$-equivalence if and only if it is sent by the functor $\tau_p \colon \sSet \lra \mathbf{Pos}$ to an isomorphism of posets.
\end{corollary}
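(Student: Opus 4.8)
The plan is to argue exactly as in the proof of Corollary~\ref{wk1qcateq}, with Theorem~\ref{cat1qcat} replaced by Theorem~\ref{cat0qcat}. By Theorem~\ref{cat0qcat}, the adjunction $\tau_p \dashv N$ is a Quillen equivalence between the trivial model structure for posets and the model structure for $0$-truncated quasi-categories. Since the cofibrations of the model structure for $0$-truncated quasi-categories are the monomorphisms (Theorem~\ref{ntruncmodstr}), every simplicial set is cofibrant; dually, every object of the trivial model structure for posets is cofibrant. I would therefore invoke the standard fact that the left adjoint of a Quillen equivalence preserves and reflects weak equivalences between cofibrant objects.

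Applying this fact to the left Quillen functor $\tau_p \colon \sSet \lra \mathbf{Pos}$ of the Quillen equivalence of Theorem~\ref{cat0qcat}, one obtains that a morphism of simplicial sets is a weak equivalence in the model structure for $0$-truncated quasi-categories — that is, a weak categorical $0$-equivalence — if and only if it is sent by $\tau_p$ to a weak equivalence in the trivial model structure for posets, i.e.\ to an isomorphism of posets. This is precisely the assertion of the corollary.

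There is essentially no obstacle here beyond correctly citing the two ingredients: the Quillen equivalence furnished by Theorem~\ref{cat0qcat} and the general lemma on left adjoints of Quillen equivalences (the same lemma already used to prove Corollary~\ref{wk1qcateq}). If one wished, one could additionally note that this characterisation is compatible, via Theorem~\ref{wethm}, with the earlier definition of a categorical $0$-equivalence of quasi-categories as a morphism inverted by $\tau_p$; but this observation is not needed for the proof.
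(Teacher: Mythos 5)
Your proof is correct and follows exactly the same route as the paper: both invoke the Quillen equivalence of Theorem~\ref{cat0qcat} together with the fact that the left adjoint of a Quillen equivalence preserves and reflects weak equivalences between cofibrant objects, noting that every simplicial set is cofibrant. No gaps.
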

\begin{proof}
Since the functor $\tau_p$ is the left adjoint of a Quillen equivalence by Theorem \ref{cat0qcat}, and since every simplicial set is cofibrant in the model structure for $0$-truncated quasi-categories, this is another instance of the fact that the left adjoint of a Quillen equivalence preserves and reflects weak equivalences between cofibrant objects.
\end{proof}

Recall that a categorical $n$-truncation of a simplicial set $A$ is an $n$-truncated quasi-category $X$ together with a weak categorical $n$-equivalence $A \lra X$.

\begin{corollary}[{\cite[\S26.7]{joyalnotes}}]
For each simplicial set $A$, the unit morphism $A \lra N(\tau_1A)$ is a categorical $1$-truncation of $A$, and the unit morphism $A \lra N(\tau_pA)$ is a categorical $0$-truncation of $A$.
\end{corollary}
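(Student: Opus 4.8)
The plan is to verify, for each of the two unit morphisms, the two defining properties of a categorical truncation: that the target is a truncated quasi-category of the appropriate level, and that the morphism is a weak categorical equivalence of that level. Recall that a categorical $n$-truncation of $A$ is an $n$-truncated quasi-category $X$ together with a weak categorical $n$-equivalence $A \lra X$.

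First I would treat the $1$-truncation case. The object $N(\tau_1 A)$ is the nerve of a category, hence a $1$-truncated quasi-category by Proposition \ref{catprop}. It then remains to check that the unit morphism $\eta_A \colon A \lra N(\tau_1 A)$ of the adjunction (\ref{nerveadj}) is a weak categorical $1$-equivalence. By Corollary \ref{wk1qcateq}, this holds if and only if $\tau_1(\eta_A)$ is an equivalence of categories. But by one of the triangle identities for $\tau_1 \dashv N$, the composite $\tau_1 A \xrightarrow{\tau_1(\eta_A)} \tau_1 N(\tau_1 A) \xrightarrow{\epsilon_{\tau_1 A}} \tau_1 A$ is the identity, and since $N$ is fully faithful the counit $\epsilon$ is an isomorphism; hence $\tau_1(\eta_A)$ is itself an isomorphism of categories, and in particular an equivalence. (Equivalently, since $\tau_1 \dashv N$ is a Quillen equivalence by Theorem \ref{cat1qcat}, every simplicial set is cofibrant, and every category is fibrant in the folk model structure, the derived unit at $A$ is just $\eta_A$, which is therefore a weak equivalence in the model structure for $1$-truncated quasi-categories.)

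The $0$-truncation case is entirely analogous. The object $N(\tau_p A)$ is the nerve of a poset, hence a $0$-truncated quasi-category by Proposition \ref{posetprop}. By Corollary \ref{wk0qcateq}, the unit morphism $A \lra N(\tau_p A)$ of the adjunction (\ref{posadj}) is a weak categorical $0$-equivalence if and only if it is sent by $\tau_p$ to an isomorphism of posets; and this follows from the same triangle-identity argument, using that the right adjoint $N \colon \mathbf{Pos} \lra \sSet$ is fully faithful (or, alternatively, from Theorem \ref{cat0qcat} together with the fact that every poset is fibrant in the trivial model structure for posets).

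I do not anticipate a genuine obstacle here: both claims reduce immediately to the characterisations of weak categorical $n$-equivalences via $\tau_1$ and $\tau_p$ (Corollaries \ref{wk1qcateq} and \ref{wk0qcateq}) together with the identification of the nerves of categories and posets as $1$- and $0$-truncated quasi-categories (Propositions \ref{catprop} and \ref{posetprop}). The only point requiring the slightest care is the observation that the left adjoint applied to a unit morphism is invertible, which is a formal consequence of the full faithfulness of the corresponding right adjoint.
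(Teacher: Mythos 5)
Your proposal is correct and follows essentially the same route as the paper: reduce to Corollaries \ref{wk1qcateq} and \ref{wk0qcateq} and then observe that the left adjoint inverts the unit because the right adjoint is fully faithful (the triangle-identity argument you spell out is exactly this standard fact). Your explicit check that $N(\tau_1A)$ and $N(\tau_pA)$ are $1$- and $0$-truncated via Propositions \ref{catprop} and \ref{posetprop} is left implicit in the paper but is a worthwhile inclusion.
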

\begin{proof}
By Corollaries \ref{wk1qcateq} and \ref{wk0qcateq}, it suffices to show that these unit morphisms are sent to isomorphisms by the functors $\tau_1$ and $\tau_p$ respectively. In each case, this is an instance of the fact that each component of the unit of an adjunction whose right adjoint is fully faithful is sent by the left adjoint to an isomorphism.
\end{proof}

Next, we prove, for each $n \geq -1$, two Quillen equivalences between the model categories of $n$-truncated quasi-categories and Rezk's $(n,1)$-$\Theta$-spaces. 
 In \cite{MR2342834}, Joyal and Tierney established two Quillen equivalences
\begin{equation} \label{jtqe}
\xymatrix{
[\Delta^\mathrm{op},\Set] \ar@<-1.5ex>[rr]^-{\hdash}_-{t^!} && \ar@<-1.5ex>[ll]_-{t_!} [(\Delta\times\Delta)^\mathrm{op},\Set]
}
\qquad\quad
\xymatrix{
[(\Delta\times\Delta)^\mathrm{op},\Set] \ar@<-1.5ex>[rr]^-{\hdash}_-{i_1^*} && \ar@<-1.5ex>[ll]_-{p_1^*} [\Delta^\mathrm{op},\Set]
}
\end{equation}
between Joyal's model structure for quasi-categories and Rezk's model structure for complete Segal spaces on the category of bisimplicial sets (defined in \cite{MR1804411}). Suffice it to recall  that the functor $t^!$ sends a simplicial set $A$ to the bisimplicial set $t^!(A)$ whose $n$th column $t^!(A)_{n}$ is the simplicial set $k^!(A^{\Delta^n})$ (where $k^!$ denotes the right adjoint of the Quillen adjunction (\ref{joykadj})), that the functor $i_1^*$ sends a bisimplicial set $X$ to its zeroth row $X_{*0}$, and that there are natural isomorphisms $t_!p_1^* \cong \mathrm{id}$ and $i_1^*t^! \cong \mathrm{id}$. For each  complete Segal space $X$, we refer to the elements of the set $X_{00}$ as the objects of $X$; there is an evident bijection between the objects of a quasi-category $A$ and the objects of its associated complete Segal space $t^!(A)$.

 For each $n \geq -1$, Rezk constructed in \cite{MR2578310} a Bousfield localisation of the model structure for complete Segal spaces, whose fibrant objects are the complete Segal spaces $X$ each of whose hom-spaces $\Hom_X(x,y)$ is an $(n-1)$-type \cite[Proposition 11.20]{MR2578310}.  Rezk calls complete Segal spaces with this property $(n,1)$-$\Theta$-spaces, but for convenience we will call them \emph{$n$-truncated complete Segal spaces}, and we will call this model structure the \emph{model structure for $n$-truncated complete Segal spaces}. Recall from \cite[\S5.1]{MR1804411} that for each pair of objects $x,y$ of a complete Segal space $X$, the hom-space $\Hom_X(x,y)$ is defined to be the pullback
 \begin{equation*}
 \cd{
 \Hom_X(x,y) \ar[r] \ar[d] \fatpullbackcorner & X_{1} \ar[d]^-{(d_1,d_0)} \\
 \Delta^0 \ar[r]_-{(x,y)} & X_{0} \times X_{0}
 }
 \end{equation*}
 in the category of simplicial sets (where $X_{n}$ denotes the $n$th column of the bisimplicial set $X$). By comparison with the definition of the hom-spaces of a quasi-category (\ref{hompb}), one sees that there is a canonical isomorphism 
 \begin{equation} \label{homiso}
 \Hom_{t^!(A)}(x,y) \cong k^!(\Hom_A(x,y))
 \end{equation}
  for each pair of objects $x,y$ in a quasi-category $A$, since the right adjoint functor $k^!$ preserves limits.

We now apply Theorem \ref{qethm2} to prove that the two Quillen equivalences (\ref{jtqe}) remain Quillen equivalences between the Bousfield localisations for $n$-truncated quasi-categories and $n$-truncated complete Segal spaces. The following proposition shows that these Quillen equivalences satisfy the hypotheses of that theorem.

\begin{proposition} \label{hypo} Let $n \geq -1$ be an integer.
\begin{enumerate}[leftmargin=*, font=\normalfont]
\item A quasi-category $A$ is $n$-truncated if and only if the complete Segal space $t^!(A)$ is $n$-truncated. 
\item A complete Segal space $X$ is $n$-truncated if and only if its underlying quasi-category $i_1^*(X)$ is $n$-truncated.
\end{enumerate}
\end{proposition}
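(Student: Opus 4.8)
The plan is to read both biconditionals directly off the definitions: an $n$-truncated complete Segal space is a complete Segal space each of whose hom-spaces is an $(n-1)$-type, and an $n$-truncated quasi-category is a quasi-category each of whose hom-spaces is an $(n-1)$-type (Definition \ref{deftrunc}); so in each part one compares hom-spaces, using repeatedly that ``being an $(n-1)$-type'' is a homotopy-invariant property of a Kan complex (it is detected by the homotopy groups, and, in the cases $n-1\in\{-1,-2\}$, by (non)emptiness and contractibility).

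For part (1), since $A$ is a quasi-category it is fibrant in $\sSet_\mathrm{J}$, so $t^!(A)$ is fibrant in Rezk's model structure and is therefore a complete Segal space, which makes the assertion meaningful. Under the evident bijection between the objects of $A$ and those of $t^!(A)$, the canonical isomorphism (\ref{homiso}) identifies $\Hom_{t^!(A)}(x,y)$ with $k^!(\Hom_A(x,y))$, so it suffices to show that for any Kan complex $K$ the Kan complex $k^!(K)$ is homotopy equivalent to $K$. This holds because (cf.\ the discussion of the adjunction (\ref{joykadj}), with $A=\Delta^0$) $k^!(K)$ is homotopy equivalent to $J(K)$, and $J(K)\cong K$ since the fully faithful inclusion $\mathbf{Kan}\hookrightarrow\mathbf{qCat}$ has $J$ as right adjoint. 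Running this over all pairs of objects yields part (1).

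For part (2), let $X$ be a complete Segal space; then $i_1^*(X)$ is fibrant in $\sSet_\mathrm{J}$ (as $i_1^*$ is right Quillen), hence a quasi-category, so the assertion is meaningful. The plan is to reduce part (2) to part (1) by exhibiting a zig-zag of weak equivalences of complete Segal spaces, between complete Segal spaces, relating $X$ to $t^!(i_1^*(X))$: since $n$-truncatedness of a complete Segal space is invariant under such weak equivalences (either because it is the property of being a local object of a Bousfield localisation, which is invariant under weak equivalences between fibrant objects of the ambient model structure, or because a weak equivalence between complete Segal spaces induces homotopy equivalences on hom-spaces), this transfers $n$-truncatedness between $X$ and $t^!(i_1^*(X))$, and part (1) applied to the quasi-category $i_1^*(X)$ identifies $n$-truncatedness of $t^!(i_1^*(X))$ with that of $i_1^*(X)$. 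To build the zig-zag I would choose a fibrant replacement $j\colon t_!(X)\to R$ in $\sSet_\mathrm{J}$: since $t_!\dashv t^!$ is a Quillen equivalence and every bisimplicial set is cofibrant, the derived unit $X\to t^!(t_!(X))\xrightarrow{t^!(j)} t^!(R)$ is a weak equivalence of complete Segal spaces between complete Segal spaces; applying the right Quillen functor $i_1^*$ together with the isomorphism $i_1^* t^!\cong\mathrm{id}$ shows that $i_1^*(X)\to R$ is a weak categorical equivalence between quasi-categories; and applying the right Quillen functor $t^!$ to the latter gives a weak equivalence $t^!(i_1^*(X))\xrightarrow{\sim} t^!(R)$, which together with $X\xrightarrow{\sim} t^!(R)$ completes the zig-zag.

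I expect the comparison in part (2) to be the main obstacle: one must apply $i_1^*$ and $t^!$ only where they are known to preserve weak equivalences -- namely between fibrant objects -- and invoke the characterisation of a Quillen equivalence (derived unit a weak equivalence at every cofibrant object) with care. A cleaner alternative, which I would adopt if it shortens the chain of dependencies, is to prove directly a natural homotopy equivalence $\Hom_{i_1^*(X)}(x,y)\simeq\Hom_X(x,y)$ between the quasi-categorical hom-space of $i_1^*(X)$ and Rezk's hom-space of $X$, using the analysis of \cite{MR2342834}; part (2) then follows at once.
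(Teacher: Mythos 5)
Your proof is correct and follows essentially the same route as the paper: part (1) is the paper's argument verbatim (the isomorphism (\ref{homiso}) together with the homotopy equivalence $k^!\simeq J$ of \cite[Proposition 6.26]{joyalbarcelona}, plus $J(K)\cong K$ for a Kan complex $K$), and part (2) is reduced to part (1) by a zig-zag of weak equivalences between complete Segal spaces relating $X$ to $t^!(i_1^*X)$, with $n$-truncatedness transferred along it. The only difference is the choice of zig-zag: the paper uses the more economical span $X \longleftarrow p_1^*(i_1^*X) \lra t^!(i_1^*X)$, given by the counit of $p_1^*\dashv i_1^*$ and the transpose of $t_!(p_1^*(i_1^*X))\cong i_1^*X$, which avoids your fibrant replacement of $t_!(X)$, but your version is equally valid.
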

\begin{proof}
(1) Let $A$ be a quasi-category. For each pair of objects $x,y \in A$, there is a homotopy equivalence $\Hom_{t^!(A)}(x,y) \simeq \Hom_A(x,y)$ by the isomorphism (\ref{homiso}) and \cite[Proposition 6.26]{joyalbarcelona}. Hence the hom-spaces of $A$ are $(n-1)$-types if and only if the hom-spaces of $t^!(A)$ are $(n-1)$-types, that is, $A$ is an $n$-truncated quasi-category if and only if $t^!(A)$ is an $n$-truncated complete Segal space.

(2) Let $X$ be a complete Segal space. There is a span of weak equivalences in the model structure for complete Segal spaces
\begin{equation*}
\cd{
X & \ar[l] p_1^*(i_1^*X) \ar[r] & t^!(i_1^*X),
}
\end{equation*}
where the left-pointing arrow is the counit of the Quillen equivalence $p_1^* \dashv i_1^*$ and the right-pointing arrow is the transpose of the canonical isomorphism $t_!(p_1^*(i_1^*X)) \cong i_1^*X$ under the Quillen equivalence $t_! \dashv t^!$, both of which are weak equivalences since $X$ is fibrant. Hence $X$ is weakly equivalent to the complete Segal space $t^!(i_1^*X)$, and so $X$ is $n$-truncated if and only if $t^!(i_1^*X)$ is $n$-truncated, which by (1) is so if and only if the quasi-category $i_1^*X$ is $n$-truncated.
\end{proof}

Hence the adjunctions (\ref{jtqe}) satisfy the hypotheses of Theorem \ref{qethm2}, and we may deduce the following theorem.

\begin{theorem} \label{jtrezkthm} For each integer $n \geq -1$, the adjunctions
\begin{equation*}
\xymatrix{
[\Delta^\mathrm{op},\Set] \ar@<-1.5ex>[rr]^-{\hdash}_-{t^!} && \ar@<-1.5ex>[ll]_-{t_!} [(\Delta\times\Delta)^\mathrm{op},\Set]
}
\qquad\quad
\xymatrix{
[(\Delta\times\Delta)^\mathrm{op},\Set] \ar@<-1.5ex>[rr]^-{\hdash}_-{i_1^*} && \ar@<-1.5ex>[ll]_-{p_1^*} [\Delta^\mathrm{op},\Set]
}
\end{equation*}
are Quillen equivalences between the model structure for $n$-truncated quasi-categories on the category of simplicial sets and the model structure for $n$-truncated complete Segal spaces on the category of bisimplicial sets.
\end{theorem}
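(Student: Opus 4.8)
The plan is to deduce both statements at once from the general criterion of Theorem \ref{qethm2}, which says that a Quillen equivalence between left proper combinatorial model categories descends to a Quillen equivalence between prescribed Bousfield localisations as soon as the right adjoint carries the fibrant objects of the localised target to fibrant objects of the localised source and, conversely, reflects fibrancy among fibrant objects. Here the relevant Quillen equivalences are the two Joyal--Tierney adjunctions of (\ref{jtqe}) between $\sSet_\mathrm{J}$ and Rezk's model structure for complete Segal spaces; both source and target are left proper and combinatorial, the model structure for $n$-truncated quasi-categories is the Bousfield localisation of $\sSet_\mathrm{J}$ at $\partial\Delta^{n+2}\lra\Delta^{n+2}$ by Theorems \ref{ntruncmodstr} and \ref{newthm}, and Rezk's model structure for $n$-truncated complete Segal spaces is by construction a Bousfield localisation of the complete Segal space model structure; in each case the fibrant objects are precisely the $n$-truncated objects. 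So the only thing left to check is that the right adjoints $t^!$ and $i_1^*$ preserve and reflect $n$-truncatedness among fibrant objects, and this is exactly Proposition \ref{hypo}: part (1) does it for $t_!\dashv t^!$, and part (2) for $p_1^*\dashv i_1^*$. Applying Theorem \ref{qethm2} twice then gives the two asserted Quillen equivalences.

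All of the real content therefore sits in Proposition \ref{hypo}, whose proof I would organise as follows. For part (1), the point is that the hom-spaces of the complete Segal space $t^!(A)$ are homotopy equivalent to those of the quasi-category $A$: since $k^!$ preserves limits the defining pullbacks match up, giving the canonical isomorphism $\Hom_{t^!(A)}(x,y)\cong k^!(\Hom_A(x,y))$ of (\ref{homiso}), and then Joyal's comparison of $k^!(-)$ with $J(-)$ as models for derived hom-spaces \cite[Proposition 6.26]{joyalbarcelona} shows this Kan complex is homotopy equivalent to $\Hom_A(x,y)$; as being an $(n-1)$-type is a homotopy-invariant property of Kan complexes, the hom-spaces of $A$ are $(n-1)$-types if and only if those of $t^!(A)$ are. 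For part (2), one transports this along the span of weak equivalences $X\longleftarrow p_1^*(i_1^*X)\lra t^!(i_1^*X)$ --- valid since $X$ is fibrant, using the isomorphisms $t_!p_1^*\cong\id$ and $i_1^*t^!\cong\id$ --- which makes $X$ weakly equivalent to $t^!(i_1^*X)$, so $X$ is $n$-truncated if and only if $t^!(i_1^*X)$ is, if and only if (by part (1)) the quasi-category $i_1^*X$ is.

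I expect the main obstacle to be precisely this hom-space comparison underlying Proposition \ref{hypo}(1): one must be careful to use a model for the derived hom-spaces of $\sSet_\mathrm{J}$ for which the identification with the complete Segal space hom-spaces is transparent, which is why the functor $k^!$ --- rather than the functor $J$ used elsewhere in \S\ref{sectruncated} --- is the natural one to bring in at this point, with \cite[Proposition 6.26]{joyalbarcelona} bridging back to the $J$-model. Once this is in hand, everything else is a formal consequence of the Bousfield localisation machinery of the appendix.
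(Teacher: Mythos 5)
Your proposal is correct and follows essentially the same route as the paper: the theorem is reduced to Theorem \ref{qethm2} applied to the two Joyal--Tierney Quillen equivalences, with the fibrancy-preservation-and-reflection condition supplied by Proposition \ref{hypo}, whose proof you reconstruct exactly as the paper does (the hom-space comparison $\Hom_{t^!(A)}(x,y)\cong k^!(\Hom_A(x,y))$ together with \cite[Proposition 6.26]{joyalbarcelona} for part (1), and the span $X\longleftarrow p_1^*(i_1^*X)\lra t^!(i_1^*X)$ for part (2)). No gaps.
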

\begin{proof}
By \cite{MR2342834}, these adjunctions are Quillen equivalences between the model structures for quasi-categories and complete Segal spaces. For each $n \geq -1$, the model structures for $n$-truncated quasi-categories and $n$-truncated complete Segal spaces are Bousfield localisations of the former model structures, and so it remains to show that these adjunctions satisfy the conditions of Theorem \ref{qethm2}. But this is precisely what was shown in Proposition \ref{hypo}.
\end{proof}

\begin{remark}
In \cite[\S11]{MR2578310}, Rezk defines the model structure for $n$-truncated complete Segal spaces as the Bousfield localisation of the model structure for complete Segal spaces with respect to the morphism denoted therein by $V[1](\partial\Delta^{n+1} \lra \Delta^{n+1})$. One can show that the left adjoint functor $t_!$ sends this morphism to the morphism of simplicial sets $\Sigma(k_!(\partial\Delta^{n+1} \lra \Delta^{n+1}))$. Since there is a natural weak homotopy equivalence $\mathrm{id} \lra k_!$ \cite[Theorem 6.22]{joyalbarcelona}, and since the suspension functor $\Sigma$ sends weak homotopy equivalences to weak categorical equivalences, it follows from Corollary \ref{finalcor} that the model structure for $n$-truncated quasi-categories is the Bousfield localisation of the model structure for quasi-categories with respect to the morphism $\Sigma(k_!(\partial\Delta^{n+1} \lra \Delta^{n+1}))$. Hence one could alternatively prove Proposition \ref{hypo}(1) and that half of Theorem \ref{jtrezkthm} concerning the adjunction $t_! \dashv t^!$ by \cite[Proposition 3.1.12]{MR1944041} and \cite[Theorem 3.3.20]{MR1944041} respectively.
\end{remark}

To conclude, we combine two previous theorems to deduce a Quillen equivalence between the folk model structure for categories and the model structure for $1$-truncated complete Segal spaces. The right adjoint of this Quillen equivalence is the composite functor $t^!N \colon \Cat \lra [(\Delta\times\Delta)^\mathrm{op},\Set]$, which sends a category $A$ to its \emph{classifying diagram} \cite[\S3.5]{MR1804411}, which is the bisimplicial set whose $n$th column is the nerve of the maximal subgroupoid of the category $A^{[n]}$, and which was shown directly by Rezk to be a complete Segal space \cite[Proposition 6.1]{MR1804411}.

\begin{theorem} \label{thmqrezk}
The composite adjunction
\begin{equation*} 
\xymatrix{
\Cat \ar@<-1.5ex>[rr]^-{\hdash}_-N && \ar@<-1.5ex>[ll]_-{\tau_1} [\Delta^\mathrm{op},\Set] \ar@<-1.5ex>[rr]^-{\hdash}_-{t^!} && \ar@<-1.5ex>[ll]_-{t_!} [(\Delta\times\Delta)^\mathrm{op},\Set],
}
\end{equation*}
whose right adjoint is Rezk's classifying diagram functor, is a Quillen equivalence between the folk model structure for categories and the model structure for $1$-truncated complete Segal spaces.
\end{theorem}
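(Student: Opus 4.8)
The plan is to recognise the displayed composite adjunction as a composite of two Quillen equivalences that have already been established, and hence as a Quillen equivalence itself. First I would invoke Theorem \ref{cat1qcat}, which states that $\tau_1 \dashv N \colon \Cat \lra \sSet$ is a Quillen equivalence between the folk model structure for categories and the model structure for $1$-truncated quasi-categories. Next I would invoke the $n = 1$ instance of Theorem \ref{jtrezkthm}, which states that $t_! \dashv t^! \colon \sSet \lra [(\Delta\times\Delta)^\mathrm{op},\Set]$ is a Quillen equivalence between the model structure for $1$-truncated quasi-categories and the model structure for $1$-truncated complete Segal spaces.

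Since the codomain model structure of the first Quillen equivalence (the model structure for $1$-truncated quasi-categories on $\sSet$) is literally the domain model structure of the second, the two adjunctions compose. A composite of Quillen adjunctions is a Quillen adjunction, and a composite of Quillen equivalences is again a Quillen equivalence, since the total left derived functor of a composite of left Quillen functors is canonically isomorphic to the composite of their total left derived functors, and a composite of equivalences of categories is an equivalence of categories. Hence the composite adjunction $\tau_1 t_! \dashv t^! N$ is a Quillen equivalence between the folk model structure for categories and the model structure for $1$-truncated complete Segal spaces. Finally, as noted in the discussion preceding the statement, the composite right adjoint $t^! N$ is precisely Rezk's classifying diagram functor, so the identification of the right adjoint requires no further argument.

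Since every ingredient has already been proved, there is essentially no obstacle here; the only point requiring (trivial) care is the purely formal observation that the ``middle'' model structure on $\sSet$ is the same one appearing on both sides, which is immediate from the statements of the two theorems being composed.
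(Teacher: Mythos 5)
Your proof is correct and is essentially identical to the paper's: both compose the Quillen equivalence of Theorem \ref{cat1qcat} with the $n=1$ case of Theorem \ref{jtrezkthm} and appeal to the fact that a composite of Quillen equivalences is a Quillen equivalence. Your additional remarks on derived functors and the identification of the right adjoint with the classifying diagram functor are accurate but not needed beyond what the paper already records.
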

\begin{proof}
This adjunction is the composite of the Quillen equivalence of Theorem \ref{cat1qcat} and the $n=1$ case of one of the Quillen equivalences of Theorem \ref{jtrezkthm}, and is therefore a Quillen equivalence.
\end{proof}

\appendix

\section{Bousfield localisations} \label{secbousfield}
In this appendix, we recall some of the basic theory of Bousfield localisations of model categories (mostly those results that we use which are difficult to find explicitly stated in the literature in the form we use them), including two criteria for detecting Quillen equivalences between Bousfield localisations. We assume familiarity with the basic theory of model categories, such as is contained in \cite[Chapter 1]{MR1650134}; our approach is particularly influenced by the insightful appendices \cite[\S7]{MR2342834} and \cite[Appendix E]{joyalbarcelona}.

We begin with the notion of a Bousfield localisation of a model category (after \cite[Definition 7.20]{MR2342834}, in contrast to \cite[Definition 3.3.1]{MR1944041}, where a Bousfield localisation is defined with respect to a given class of morphisms). Recall that a \emph{model category} is a locally small complete and cocomplete category equipped with a \emph{model structure}, which is determined by its classes $(\mathcal{C},\mathcal{W},\mathcal{F})$ of \emph{cofibrations}, \emph{weak equivalences}, and \emph{fibrations}.

\begin{definition} \label{bousdef}
A \emph{Bousfield localisation} of a model structure $(\mathcal{C},\mathcal{W},\mathcal{F})$ on a category $\mathcal{M}$ is a model structure $(\mathcal{C}_\mathrm{loc},\mathcal{W}_\mathrm{loc},\mathcal{F}_\mathrm{loc})$ on the same category $\mathcal{M}$ such that $\mathcal{C}_\mathrm{loc} = \mathcal{C}$ and $\mathcal{W} \subseteq \mathcal{W}_\mathrm{loc}$.
\end{definition}

We will often denote the model category determined by a Bousfield localisation of (the model structure of) a model category $\mathcal{M}$ by $\mathcal{M}_\mathrm{loc}$, and call the morphisms belonging to the classes $\mathcal{W}_\mathrm{loc}$ and $\mathcal{F}_\mathrm{loc}$ \emph{local weak equivalences} and \emph{local fibrations} respectively; the fibrant objects of the model category $\mathcal{M}_\mathrm{loc}$ we will call \emph{local fibrant objects}.  
It is immediate from the definition that the adjunction
\begin{equation*} \label{locqadj}
\xymatrix{
\mathcal{M}_\mathrm{loc}\ar@<-1.5ex>[rr]^-{\hdash}_-{1_\mathcal{M}} && \ar@<-1.5ex>[ll]_-{1_\mathcal{M}} \mathcal{M},
}
\end{equation*}
whose left and right adjoints both are  the identity functor on (the underlying category of) $\mathcal{M}$, is a Quillen adjunction. Hence every local fibration and local fibrant object is in particular a fibration and a fibrant object (in the model category $\mathcal{M}$) respectively. Moreover, the derived right adjoint of this Quillen adjunction is fully faithful, which is to say that the Quillen adjunction is a \emph{homotopy reflection} in the sense of \cite[Definition E.2.15]{joyalbarcelona} (the term \emph{homotopy localisation} is used in \cite{MR2342834}); it follows that a morphism between local fibrant objects is a weak equivalence if and only if it is a local weak equivalence \cite[Proposition 7.18]{MR2342834}. Furthermore, it follows by a factorisation and retract argument that a morphism between local fibrant objects is a fibration if and only if it is a local fibration \cite[Proposition 7.21]{MR2342834}.

The model category axioms imply that a morphism is a local fibration if and only if it has the right lifting property with respect to the class of morphisms $\mathcal{C}\cap\mathcal{W}_\mathrm{loc}$, whose members we call \emph{local trivial cofibrations}. 
Hence a Bousfield localisation of a model category is determined by its class $\mathcal{W}_\mathrm{loc}$ of local weak equivalences. Alternatively, a Bousfield localisation of a model category is determined by its class of local fibrant objects, since this class determines the local weak equivalences by the following argument (cf.\ \cite[Proposition E.1.10]{joyalbarcelona} and \cite[\S3.5]{MR1944041}). We denote the homotopy category of a model category $\mathcal{M}$ by $\Ho\mathcal{M}$; we will typically not distinguish an object or morphism of $\mathcal{M}$ from its image under the localisation functor $\mathcal{M} \lra \Ho\mathcal{M}$.

\begin{lemma} \label{locwe}
Let $\mathcal{M}_\mathrm{loc}$ be a Bousfield localisation of a model category $\mathcal{M}$. A morphism $f \colon A \lra B$ in $\mathcal{M}$ is a local weak equivalence if and only if the function
\begin{equation} \label{precomp}
(\Ho\mathcal{M})(f,X) : (\Ho\mathcal{M})(B,X) \lra (\Ho\mathcal{M})(A,X)
\end{equation}
is a bijection for each local fibrant object $X$.
\end{lemma}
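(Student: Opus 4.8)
The plan is to run the standard argument (compare \cite[Proposition E.1.10]{joyalbarcelona} and \cite[\S3.5]{MR1944041}) that recovers a Bousfield localisation's weak equivalences from its fibrant objects via the universal property of the homotopy category. Throughout, write $\gamma\colon\mathcal{M}\lra\Ho\mathcal{M}$ and $\gamma_\mathrm{loc}\colon\mathcal{M}\lra\Ho\mathcal{M}_\mathrm{loc}$ for the two localisation functors, and recall from above the identity Quillen adjunction, whose left Quillen functor is $1_\mathcal{M}\colon\mathcal{M}\lra\mathcal{M}_\mathrm{loc}$ and whose right Quillen functor is $1_\mathcal{M}\colon\mathcal{M}_\mathrm{loc}\lra\mathcal{M}$.

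\emph{Step 1: reformulate the left-hand side inside $\Ho\mathcal{M}_\mathrm{loc}$.} By the standard fact that a morphism in a model category is a weak equivalence if and only if its image in the homotopy category is an isomorphism (see e.g.\ \cite[Chapter 1]{MR1650134}), the morphism $f$ is a local weak equivalence precisely when $\gamma_\mathrm{loc}(f)$ is an isomorphism in $\Ho\mathcal{M}_\mathrm{loc}$. By the Yoneda lemma this holds if and only if the function $(\Ho\mathcal{M}_\mathrm{loc})(f,Z)\colon(\Ho\mathcal{M}_\mathrm{loc})(B,Z)\lra(\Ho\mathcal{M}_\mathrm{loc})(A,Z)$ is a bijection for every object $Z$ of $\Ho\mathcal{M}_\mathrm{loc}$; and since any fibrant replacement $M\lra RM$ in $\mathcal{M}_\mathrm{loc}$ is a local weak equivalence, every object of $\Ho\mathcal{M}_\mathrm{loc}$ is isomorphic to the image of a local fibrant object, so it is enough to test this condition on the local fibrant objects $X$.

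\emph{Step 2: transport these hom-sets to $\Ho\mathcal{M}$.} Since the cofibrations of $\mathcal{M}$ and $\mathcal{M}_\mathrm{loc}$ coincide and $\mathcal{W}\subseteq\mathcal{W}_\mathrm{loc}$, a cofibrant replacement of an object $A$ in $\mathcal{M}$ is also one in $\mathcal{M}_\mathrm{loc}$; hence the total left derived functor $\mathbf{L}\colon\Ho\mathcal{M}\lra\Ho\mathcal{M}_\mathrm{loc}$ of $1_\mathcal{M}$ is naturally isomorphic to the canonical comparison functor, i.e.\ $\mathbf{L}\gamma\cong\gamma_\mathrm{loc}$. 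Dually, the total right derived functor $\mathbf{R}\colon\Ho\mathcal{M}_\mathrm{loc}\lra\Ho\mathcal{M}$ of $1_\mathcal{M}$ is computed by fibrant replacement in $\mathcal{M}_\mathrm{loc}$, so $\mathbf{R}$ sends the image of a local fibrant object $X$ to the image of $X$ itself; moreover $\mathbf{L}\dashv\mathbf{R}$. Consequently, for every object $A$ of $\mathcal{M}$ and every local fibrant object $X$, the derived adjunction furnishes a bijection $(\Ho\mathcal{M}_\mathrm{loc})(A,X)\cong(\Ho\mathcal{M})(A,\mathbf{R}X)=(\Ho\mathcal{M})(A,X)$, natural in $A$. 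Applying the naturality of this bijection to the morphism $f\colon A\lra B$ identifies the function $(\Ho\mathcal{M}_\mathrm{loc})(f,X)$ appearing in Step 1 with the function $(\Ho\mathcal{M})(f,X)$ appearing in the statement (each being a bijection if and only if the other is), which together with Step 1 completes the proof.

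The argument is entirely formal, so I expect no real obstacle; the points that need care are the two identifications of derived functors in Step 2 and the remark that every object of $\Ho\mathcal{M}_\mathrm{loc}$ is isomorphic to a local fibrant one, which is what licenses restricting the quantifier in Step 1. (One could instead argue by hand, computing $(\Ho\mathcal{M})(A,X)$ and $(\Ho\mathcal{M}_\mathrm{loc})(A,X)$ as sets of homotopy classes of maps into $X$ out of a common cofibrant replacement of $A$, using that a cylinder object in $\mathcal{M}$ remains one in $\mathcal{M}_\mathrm{loc}$ and that $X$ is fibrant in both model structures; but passing to the derived adjunction is cleaner.)
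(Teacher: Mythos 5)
Your proof is correct, and its first half (reformulating ``local weak equivalence'' as ``isomorphism in $\Ho\mathcal{M}_\mathrm{loc}$'', applying Yoneda, and restricting the test objects to local fibrant ones because every object of $\Ho\mathcal{M}_\mathrm{loc}$ is isomorphic to such) is exactly the paper's first half. Where you diverge is in comparing $(\Ho\mathcal{M}_\mathrm{loc})(-,X)$ with $(\Ho\mathcal{M})(-,X)$: you invoke the derived adjunction $\mathbf{L}\dashv\mathbf{R}$ of the identity Quillen adjunction, note that $\mathbf{L}$ agrees with the canonical comparison functor and that $\mathbf{R}$ fixes local fibrant objects, and then read off the natural bijection $(\Ho\mathcal{M}_\mathrm{loc})(A,X)\cong(\Ho\mathcal{M})(A,X)$ from the adjunction isomorphism. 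The paper instead argues by hand: it replaces $f$ by a morphism of cofibrant objects and identifies both hom-sets with homotopy classes of maps into $X$, which coincide because a cylinder object for a cofibrant object in $\mathcal{M}$ is also one in $\mathcal{M}_\mathrm{loc}$ --- precisely the alternative you sketch in your closing parenthesis. The trade-off is the expected one: your route is cleaner and offloads the bookkeeping onto the standard theory of derived adjunctions (at the cost of having to justify the two identifications of derived functors, which you do correctly), while the paper's is more elementary and self-contained, making visible exactly which model-categorical facts (shared cofibrations, shared cylinder objects, fibrancy of $X$ in both structures) are being used.
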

\begin{proof}
A morphism $f \colon A \lra B$ in $\mathcal{M}$ is a local weak equivalence if and only if it is (sent to) an isomorphism in the homotopy category $\Ho\mathcal{M}_\mathrm{loc}$, which is so, by the Yoneda lemma, if and only if the  function
\begin{equation} \label{precomp2}
(\Ho\mathcal{M}_\mathrm{loc})(f,X) : (\Ho\mathcal{M}_\mathrm{loc})(B,X) \lra (\Ho\mathcal{M}_\mathrm{loc})(A,X)
\end{equation}
 is a bijection for each local fibrant object $X$ (since every object of $\Ho\mathcal{M}_\mathrm{loc}$ is isomorphic to a local fibrant object). 
 By taking cofibrant replacements in $\mathcal{M}$, we may  suppose $f \colon A \lra B$ to be a morphism between cofibrant objects.
For each cofibrant object $C$ and local fibrant object $X$, the sets $(\Ho\mathcal{M}_\mathrm{loc})(C,X)$ and $(\Ho\mathcal{M})(C,X)$ are in bijection with the sets of homotopy classes of morphisms $C \lra X$ in the model categories $\mathcal{M}_\mathrm{loc}$ and $\mathcal{M}$ respectively. But these latter sets coincide, since any cylinder object for $C$ in the model category $\mathcal{M}$ is also a cylinder object for $C$ in the model category $\mathcal{M}_{\mathrm{loc}}$; hence there is a bijection $(\Ho\mathcal{M}_\mathrm{loc})(C,X) \cong (\Ho\mathcal{M})(C,X)$. The functions (\ref{precomp}) and (\ref{precomp2}) correspond under these bijections, and so one is a bijection if and only if the other is.
\end{proof}

Hence a Bousfield localisation of a model category $\mathcal{M}$ can equivalently be defined as a model structure with the same underlying category and the same class of cofibrations as $\mathcal{M}$, but with fewer fibrant objects. This alternative definition makes it easy to recognise Bousfield localisations, as in the following example. 

\begin{example} \label{example1}
On the category $\sSet$ of simplicial sets, the model structure for Kan complexes is a Bousfield localisation of the model structure for quasi-categories, since the cofibrations are the monomorphisms in both model structures, and since every Kan complex is a quasi-category (see \cite[\S1]{MR2342834} for details). Hence a morphism of Kan complexes is a homotopy equivalence if and only if it is an equivalence of quasi-categories. 
\end{example}

We may thus regard a Bousfield localisation of a given model category as determined by its local fibrant objects. Given this perspective, the following lemma will be found useful (cf.\ \cite[Proposition E.2.23]{joyalbarcelona} and \cite[Proposition 3.3.15]{MR1944041}). 
We say that two objects in a model category $\mathcal{M}$ are \emph{weakly equivalent} if they are isomorphic in the homotopy category $\Ho\mathcal{M}$, that is, if they are connected by a zig-zag of weak equivalences in $\mathcal{M}$.

\begin{lemma} \label{weloc}
Let $\mathcal{M}_\mathrm{loc}$ be a Bousfield localisation of a model category $\mathcal{M}$. A fibrant object of $\mathcal{M}$ is local fibrant if and only if it is weakly equivalent in $\mathcal{M}$ to a local fibrant object.
\end{lemma}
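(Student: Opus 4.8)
The converse implication is immediate: via the identity morphism, every local fibrant object is weakly equivalent in $\mathcal{M}$ to a local fibrant object. The plan for the non-trivial implication is a retract argument. Suppose $X$ is a fibrant object of $\mathcal{M}$ that is weakly equivalent in $\mathcal{M}$ to a local fibrant object. First I would factor the morphism from $X$ to the terminal object $1$ in the model category $\mathcal{M}_\mathrm{loc}$ as a local trivial cofibration $\lambda \colon X \lra X'$ followed by a local fibration, so that $X'$ is local fibrant and $\lambda$ is in particular both a cofibration of $\mathcal{M}$ and a local weak equivalence. The crux of the proof is to show that $\lambda$ is moreover a weak equivalence in $\mathcal{M}$. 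Granting this, $\lambda$ is a trivial cofibration of $\mathcal{M}$, and so, since $X$ is fibrant in $\mathcal{M}$, the lifting problem with left-hand morphism $\lambda$, right-hand morphism $X \lra 1$, and top morphism $\mathrm{id}_X$ admits a solution $r \colon X' \lra X$ with $r\lambda = \mathrm{id}_X$. Thus $X$ is a retract of $X'$ (in $\mathcal{M}$, hence in $\mathcal{M}_\mathrm{loc}$); since the class of fibrant objects of any model category is closed under retracts, it follows that $X$ is local fibrant.

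To prove that $\lambda$ is a weak equivalence in $\mathcal{M}$, I would pass to homotopy categories. The inclusion $\mathcal{W} \subseteq \mathcal{W}_\mathrm{loc}$ induces a localisation functor $\gamma \colon \Ho\mathcal{M} \lra \Ho\mathcal{M}_\mathrm{loc}$, and the assertion that the Quillen adjunction $1_\mathcal{M} \dashv 1_\mathcal{M}$ between $\mathcal{M}_\mathrm{loc}$ and $\mathcal{M}$ is a homotopy reflection says precisely that $\gamma$ has a fully faithful right adjoint, whose essential image is the full subcategory of $\Ho\mathcal{M}$ spanned by those objects that are isomorphic in $\Ho\mathcal{M}$ to a local fibrant object (the ``local objects''). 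On this subcategory $\gamma$ restricts to an equivalence onto its image, and in particular is conservative there. Now $X$ is a local object by hypothesis, $X'$ is a local object since it is local fibrant, and $\gamma(\lambda)$ is an isomorphism since $\lambda$ is a local weak equivalence; by conservativity, $\lambda$ is an isomorphism in $\Ho\mathcal{M}$, that is, a weak equivalence in $\mathcal{M}$. If one prefers to avoid this abstract step, one can instead choose a local fibrant object $Y$ weakly equivalent in $\mathcal{M}$ to $X$ and replace it by a cofibrant replacement, which remains local fibrant since the trivial fibrations of $\mathcal{M}$ and of $\mathcal{M}_\mathrm{loc}$ coincide; then an isomorphism $Y \lra X$ in $\Ho\mathcal{M}$ composed with $\lambda$ is represented by an honest morphism $Y \lra X'$ of $\mathcal{M}$ between local fibrant objects which is a local weak equivalence, hence a weak equivalence of $\mathcal{M}$ by \cite[Proposition 7.18]{MR2342834}, whence $\lambda$ is one too.

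The main obstacle I anticipate is precisely this crux step. Since being local fibrant is defined as a lifting property in $\mathcal{M}_\mathrm{loc}$, it is not a priori invariant under weak equivalence in $\mathcal{M}$; one has to exploit the homotopy-reflection property to transport information between $\Ho\mathcal{M}$ and $\Ho\mathcal{M}_\mathrm{loc}$, and then use the retract-and-lifting argument to convert the resulting weak equivalence of $\mathcal{M}$ back into honest fibrancy in $\mathcal{M}_\mathrm{loc}$.
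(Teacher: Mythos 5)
Your proof is correct and rests on the same two pillars as the paper's: a retract argument (a trivial cofibration into a local fibrant object admits a retraction because $X$ is fibrant in $\mathcal{M}$) and the homotopy-reflection property of Bousfield localisations, which the paper invokes in the form ``a local weak equivalence between local fibrant objects is a weak equivalence'' and you invoke as conservativity of $\Ho\mathcal{M} \lra \Ho\mathcal{M}_\mathrm{loc}$ on the reflective subcategory of local objects. The only difference is organisational: the paper splits into cases according to the direction of the weak equivalence connecting $X$ to a local fibrant object, whereas you work uniformly with a single local fibrant replacement of $X$.
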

\begin{proof}
The condition being obviously necessary, we prove its sufficiency. Suppose $X$ is a fibrant object of $\mathcal{M}$ that is weakly equivalent to a local fibrant object $Y$. Since both objects are fibrant in $\mathcal{M}$, they are connected by a span of weak equivalences $X \longleftarrow Z \longrightarrow Y$ in which the object $Z$ is fibrant. Hence it suffices to consider the two cases in which (i) there exists a weak equivalence $X \lra Y$, or (ii) there exists a weak equivalence $Y \lra X$.

In case (i), take a factorisation of  the weak equivalence $X \lra Y$ into a trivial cofibration $X \lra W$ followed by a trivial fibration $W \lra Y$. Since a trivial fibration is in particular a local fibration, $W$ is a local fibrant object. Since $X$ is fibrant, the trivial cofibration $X \lra W$ has a retraction, whence $X$ is a retract of the local fibrant object $W$, and is therefore local fibrant.

In case (ii), let $X \lra X'$ be a local fibrant replacement of $X$. The composite $Y \lra X \lra X'$ is then a local weak equivalence between local fibrant objects, and hence is a weak equivalence. It then follows from the two-of-three property that $X \lra X'$ is a weak equivalence, and so $X$ is local fibrant by case (i).
\end{proof}

One can use the following criterion involving local fibrant objects to determine when a Bousfield localisation of a cartesian model category is cartesian, at least when every object is cofibrant. Recall that a model category $\mathcal{M}$ is said to be \emph{cartesian} (or \emph{cartesian closed} \cite[Definition 7.29]{MR2342834}) if its underlying category is cartesian closed, its terminal object is cofibrant,  and the product functor $- \times - \colon \mathcal{M} \times \mathcal{M} \lra \mathcal{M}$ is a  \emph{left Quillen bifunctor} \cite[Definition 4.2.1]{MR1650134}. 

\begin{proposition} \label{cartprop}
Let $\mathcal{M}$ be a cartesian model category in which every object is cofibrant. A Bousfield localisation of $\mathcal{M}$ is cartesian if and only if the internal hom object $X^A$ is local fibrant for every object $A$ and every local fibrant object $X$ of $\mathcal{M}$.
\end{proposition}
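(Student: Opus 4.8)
Recall that $\mathcal{M}_\mathrm{loc}$ is cartesian exactly when the product functor $-\times-\colon\mathcal{M}_\mathrm{loc}\times\mathcal{M}_\mathrm{loc}\lra\mathcal{M}_\mathrm{loc}$ is a left Quillen bifunctor (the terminal object is automatically cofibrant, since every object is), equivalently when the internal hom $(-)^{(-)}\colon\mathcal{M}_\mathrm{loc}^\mathrm{op}\times\mathcal{M}_\mathrm{loc}\lra\mathcal{M}_\mathrm{loc}$ is a right Quillen bifunctor. For the ``only if'' direction I would apply the latter property to the cofibration $\emptyset\lra A$ (a cofibration since every object of $\mathcal{M}$ is cofibrant) and the local fibration $X\lra *$ (a local fibration since $X$ is local fibrant): as $\emptyset$ is initial and the underlying category is cartesian closed, $X^\emptyset\cong *$, so the resulting pullback-hom morphism is the morphism $X^A\lra *$, which is therefore a local fibration; that is, $X^A$ is local fibrant.

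For the ``if'' direction I would verify the pushout-product axiom for $-\times-$ with respect to $\mathcal{M}_\mathrm{loc}$. Since $\mathcal{M}$ is cartesian and $\mathcal{M}_\mathrm{loc}$ has the same cofibrations, the pushout-product of two cofibrations is again a cofibration; by the symmetry of the pushout-product, it therefore remains only to show that for a cofibration $i\colon A\lra B$ and a local trivial cofibration $j\colon C\lra D$, the pushout-product
\begin{equation*}
(A\times D)\cup_{A\times C}(B\times C)\lra B\times D,
\end{equation*}
whose domain we denote by $P$, is a local weak equivalence (it is automatically a cofibration).

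By Lemma \ref{locwe}, it suffices to show that for every local fibrant object $X$ the morphism $\mathrm{Map}(B\times D,X)\lra\mathrm{Map}(P,X)$ of derived mapping spaces (homotopy function complexes) is a weak equivalence of simplicial sets, so in particular a bijection on $\pi_0$. Since $i$ and $j$ are cofibrations and every object of $\mathcal{M}$ is cofibrant, the defining pushout square of $P$ is a homotopy pushout, so $\mathrm{Map}(P,X)$ is the homotopy pullback of $\mathrm{Map}(A\times D,X)\lra\mathrm{Map}(A\times C,X)\longleftarrow\mathrm{Map}(B\times C,X)$. Rewriting each of these mapping spaces by means of the Quillen adjunctions $-\times A\dashv(-)^A$ and $-\times B\dashv(-)^B$ (left Quillen because $\mathcal{M}$ is cartesian and every object is cofibrant), one identifies the morphism $\mathrm{Map}(B\times D,X)\lra\mathrm{Map}(P,X)$ with the canonical comparison morphism from $\mathrm{Map}(D,X^B)$ to the homotopy pullback of
\begin{equation*}
\mathrm{Map}(D,X^A)\lra\mathrm{Map}(C,X^A)\longleftarrow\mathrm{Map}(C,X^B),
\end{equation*}
the left leg being induced by $j\colon C\lra D$ and the right leg by $X^i\colon X^B\lra X^A$. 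This is where the hypothesis enters: since $X$ is local fibrant, the objects $X^A$ and $X^B$ are local fibrant, and hence $\mathrm{Map}(-,X^A)$ and $\mathrm{Map}(-,X^B)$ send the local weak equivalence $j$ (a morphism between cofibrant objects) to weak equivalences of simplicial sets. Therefore the leg induced by $j$ on $\mathrm{Map}(-,X^A)$ is a weak equivalence, so the above homotopy pullback is equivalent to $\mathrm{Map}(C,X^B)$ and the comparison morphism is identified with the weak equivalence induced by $j$ on $\mathrm{Map}(-,X^B)$. Hence the pushout-product is a local weak equivalence, as desired.

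The main obstacle is the homotopical bookkeeping in the previous paragraph, and in particular the fact that $\mathrm{Map}(-,Z)$ inverts local weak equivalences between cofibrant objects whenever $Z$ is local fibrant. This rests on the observation---implicit in the proof of Lemma \ref{locwe}, where it is used only at the level of $\pi_0$---that, because $\mathcal{M}$ and $\mathcal{M}_\mathrm{loc}$ have the same cofibrations (hence the same cosimplicial frames), the derived mapping space from a cofibrant object into a local fibrant object is computed in the same way in $\mathcal{M}$ and in $\mathcal{M}_\mathrm{loc}$; the claim then follows from the corresponding property of $\mathcal{M}_\mathrm{loc}$. The remaining ingredients used---that a pushout of a cofibration along a morphism between cofibrant objects is a homotopy pushout, that mapping out of a homotopy pushout into a fibrant object gives a homotopy pullback, and that a Quillen adjunction induces a natural equivalence of derived mapping spaces---are standard.
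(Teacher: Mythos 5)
Your proof is correct, and the necessity direction is essentially the paper's. The sufficiency direction, however, takes a genuinely different route. The paper stays entirely at the level of homotopy-category hom-sets: it first proves that $f\times C$ is a local weak equivalence for any local weak equivalence $f$ and \emph{any} object $C$, by combining the adjunction isomorphism $(\Ho\mathcal{M})(B\times C,X)\cong(\Ho\mathcal{M})(B,X^C)$ (Hovey, Theorem 4.3.2) with Lemma \ref{locwe} and the hypothesis that $X^C$ is local fibrant; it then deduces that the pushout-product of a local trivial cofibration with a cofibration is a local trivial cofibration by a short diagram chase, using that local trivial cofibrations are closed under pushout together with two-of-three. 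You instead compute the derived mapping space out of the domain of the pushout-product as a homotopy pullback and invoke the space-level statement that $\mathrm{Map}(-,Z)$ inverts local weak equivalences between cofibrant objects whenever $Z$ is local fibrant. Both arguments work. The paper's is more elementary: it needs only the $\pi_0$-level information that Lemma \ref{locwe} actually provides, and no homotopy function complexes. Yours buys a cleaner conceptual picture (the pushout-product is a weak equivalence because mapping out of a homotopy pushout gives a homotopy pullback with one degenerate leg) at the cost of the additional --- standard, but not established in the paper --- facts that derived mapping spaces from cofibrant to local fibrant objects are computed identically in $\mathcal{M}$ and $\mathcal{M}_\mathrm{loc}$ (same cosimplicial resolutions) and hence invert local weak equivalences; you correctly identify this as the main extra ingredient, so there is no gap, only a heavier toolkit.
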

\begin{proof}
The condition is necessary since every object is cofibrant and the internal hom functor of a cartesian model category is a right Quillen bifunctor.

To prove sufficiency, note that any Bousfield localisation of $\mathcal{M}$ inherits the properties that the terminal object is cofibrant and that the pushout-product of any two cofibrations is a cofibration; hence it remains to show that the pushout-product of a local trivial cofibration with a cofibration is a local trivial cofibration, or equivalently a local weak equivalence.

First, observe that for any local weak equivalence $f \colon A \lra B$ and any object $C$, the morphism $f\times C\colon A \times C \lra B \times C$ is a local weak equivalence. This follows from Lemma \ref{locwe}, since for any local fibrant object $X$, the function $(\Ho \mathcal{M})(B\times C,X) \lra (\Ho\mathcal{M})(A\times C,X)$ is isomorphic to the function $(\Ho\mathcal{M})(B,X^C) \lra (\Ho\mathcal{M})(A,X^C)$ by \cite[Theorem 4.3.2]{MR1650134}, and the latter function is a bijection by Lemma \ref{locwe} since $X^C$ is local fibrant by assumption. 

Now, let $f \colon A \lra B$ be a local trivial cofibration and let $g \colon C \lra D$ be a cofibration. Then in the diagram 
\begin{equation*}
\cd{
A \times C \ar[r]^-{A\times g} \ar[d]_-{f \times C} & A \times D \ar[d]^j \ar@/^1pc/[ddr]^-{f \times D} \\
B \times C \ar[r] \ar@/_1pc/[drr]_-{B \times g} & \pushoutcorner \cdot \ar[dr]|-{f \widehat{\times} g} \\
&& B \times D
}
\end{equation*}
we have that the morphisms $f\times C$, its pushout $j$, and $f \times D$ are local trivial cofibrations, and hence by the two-of-three property for local weak equivalences that the pushout-product $f \widehat{\times} g$ is a local trivial cofibration. 
\end{proof}

Let us now recall an existence theorem for Bousfield localisations due to Smith, which will enable us to recognise when a class of fibrant objects in a model category is the class of local fibrant objects for a Bousfield localisation of that model category. To state this theorem, it will be helpful to first recall some results from \cite[Chapter 5]{MR1650134} concerning the canonical enrichments of homotopy categories and derived adjunctions over the \emph{classical homotopy category}, that is, the homotopy category $\Ho\sSet_\mathrm{K}$ of the category of simplicial sets equipped with the model structure for Kan complexes, which we denote by $\mathscr{H}$. By \cite[\S IV.3]{MR0210125} (see also \cite[Theorem 4.3.2]{MR1650134}), $\mathscr{H}$ is a cartesian closed category, with terminal object $\Delta^0$, and as such may be considered as a base for enriched category theory (for which, see \cite[Chapter 1]{MR2177301}). We use underlines to indicate $\mathscr{H}$-enriched categories; in particular, we denote the self-enrichment of $\mathscr{H}$ by $\underline{\mathscr{H}}$.

By \cite[Theorem 5.5.3]{MR1650134}, for any model category $\mathcal{M}$, its homotopy category $\Ho\mathcal{M}$ admits a canonical enrichment over the cartesian closed category $\mathscr{H}$; we denote this $\mathscr{H}$-enriched category by $\underline{\Ho\mathcal{M}}$ and refer to its hom-objects as the \emph{derived hom-spaces} of the model category $\mathcal{M}$. (Note that the $\mathscr{H}$-enrichment of the homotopy category $\Ho\mathcal{M}^\mathrm{op}$ defines an $\mathscr{H}$-enriched category isomorphic to the opposite of $\underline{\Ho\mathcal{M}}$.) Furthermore, by \cite[Theorem 5.6.2]{MR1650134}, for any Quillen adjunction as on the left below,
\begin{equation*}
\xymatrix{
\mathcal{M} \ar@<-1.5ex>[rr]^-{\hdash}_-{G} && \ar@<-1.5ex>[ll]_-{F} \mathcal{N}
}
\qquad\quad\qquad
\xymatrix{
\underline{\Ho\mathcal{M}} \ar@<-1.5ex>[rr]^-{\hdash}_-{\mathbf{R}G} && \ar@<-1.5ex>[ll]_-{\mathbf{L}F} \underline{\Ho\mathcal{N}}
}
\end{equation*}
its derived adjunction underlies an $\mathscr{H}$-enriched adjunction between $\mathscr{H}$-enriched homotopy categories as on the right above. We refer to the right adjoint of this $\mathscr{H}$-enriched adjunction as the \emph{$\mathscr{H}$-enriched right derived functor} of $G$.

Now, let $S$ be a set of morphisms in a model category $\mathcal{M}$. We say that an object $X$ of $\mathcal{M}$ is \emph{$S$-local}, or \emph{local with respect to $S$}, if the induced morphism between derived hom-spaces
\begin{equation} \label{precomploc}
\underline{\Ho\mathcal{M}}(f,X) : \underline{\Ho\mathcal{M}}(B,X) \lra \underline{\Ho\mathcal{M}}(A,X)
\end{equation}
is an isomorphism in $\mathscr{H}$ for each morphism $f \colon A \lra B$ belonging to $S$ (cf.\ \cite[Definition 3.1.4]{MR1944041}). 
The following theorem gives sufficient conditions for the existence of the (necessarily unique) Bousfield localisation of $\mathcal{M}$ whose local fibrant objects are the $S$-local fibrant objects of $\mathcal{M}$; if it exists, we call this Bousfield localisation the \emph{Bousfield localisation of $\mathcal{M}$ with respect to $S$}, and denote it by $\mathrm{L}_S\mathcal{M}$ (cf.\ \cite[Definition 3.3.1]{MR1944041}).

\begin{example} \label{example2}
It follows from Proposition \ref{kanisloc} that the model structure for Kan complexes on the category of simplicial sets is the Bousfield localisation of the model structure for quasi-categories with respect to the single morphism $\Delta^1 \lra \Delta^0$.
\end{example}

\begin{remark}\label{bouslocrmk} The set of morphisms $S$ may be thought of as a ``presentation'' of the Bousfield localisation $\mathrm{L}_S\mathcal{M}$ of $\mathcal{M}$ (if it exists). In general, a Bousfield localisation $\mathcal{M}_\mathrm{loc}$ of a left proper (see below) model category $\mathcal{M}$ admits many such presentations: in particular, one can always take $S$ to be the (large) set of local weak equivalences; if the model category $\mathcal{M}_\mathrm{loc}$ is cofibrantly generated, one can take $S$ to be a small set of generating trivial cofibrations for $\mathcal{M}_\mathrm{loc}$.
\end{remark}

To state the existence theorem, we require the following technical conditions. A model category is said to be \emph{left proper} if any pushout of a weak equivalence along a cofibration is a weak equivalence (see \cite[\S13.1]{MR1944041}; any model category in which every object is cofibrant is left proper), and is said to be \emph{combinatorial} if it is cofibrantly generated and its underlying category is locally presentable (see \cite[\S2]{MR1870516}). Every model category considered in this paper is both left proper and combinatorial.

\begin{theorem}[Smith] \label{smiththm}
Let $\mathcal{M}$ be a left proper combinatorial model category and let $S$ be a small set of morphisms  in $\mathcal{M}$. Then there exists a  Bousfield localisation $\mathrm{L}_S\mathcal{M}$ of $\mathcal{M}$ whose local fibrant objects are precisely the $S$-local fibrant objects of $\mathcal{M}$. The model category $\mathrm{L}_S\mathcal{M}$  is left proper and combinatorial.
\end{theorem}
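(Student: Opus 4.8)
The plan is to obtain $\mathrm{L}_S\mathcal{M}$ as an instance of Jeff Smith's recognition theorem for combinatorial model structures, prescribing the cofibrations to be those of $\mathcal{M}$ and the weak equivalences to be a suitable class of ``$S$-local equivalences''. First I would replace $S$ by a set of cofibrations between cofibrant objects: replacing each $f \colon A \lra B$ in $S$ by a cofibration between cofibrant objects weakly equivalent to it (via cofibrant replacement and a cofibration--trivial-fibration factorisation) alters neither the class of $S$-local fibrant objects (by the homotopy-invariance of the derived hom-spaces $\underline{\Ho\mathcal{M}}(-,X)$) nor, ultimately, the model structure produced. Fix a small generating set $I$ of cofibrations for $\mathcal{M}$, and let $\mathrm{cof}(I)$ denote the class of maps with the left lifting property against every trivial fibration; this is the class of cofibrations of $\mathcal{M}$, and will also be that of $\mathrm{L}_S\mathcal{M}$. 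Define $\mathcal{W}_S$ to be the class of morphisms $f$ of $\mathcal{M}$ such that $\underline{\Ho\mathcal{M}}(f,X)$ is an isomorphism in $\mathscr{H}$ for every $S$-local fibrant object $X$; the candidate localisation is $(\mathrm{cof}(I),\mathcal{W}_S,\mathcal{F}_S)$, with $\mathcal{F}_S$ the maps with the right lifting property against $\mathrm{cof}(I)\cap\mathcal{W}_S$. Since the functor $\underline{\Ho\mathcal{M}}(-,X)$ inverts all weak equivalences of $\mathcal{M}$, we have $\mathcal{W}\subseteq\mathcal{W}_S$, so this will be a Bousfield localisation in the sense of Definition \ref{bousdef}.

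To apply Smith's recognition theorem I would verify four hypotheses: (a) $\mathcal{W}_S$ has the two-out-of-three property and is closed under retracts --- immediate from the same properties of the isomorphisms in $\mathscr{H}$; (b) every trivial fibration of $\mathcal{M}$ (equivalently, every map with the right lifting property against $I$) lies in $\mathcal{W}_S$ --- immediate from $\mathcal{W}\subseteq\mathcal{W}_S$; (c) $\mathcal{W}_S$ is an accessible, accessibly embedded full subcategory of the arrow category $\mathcal{M}^{[1]}$; and (d) $\mathrm{cof}(I)\cap\mathcal{W}_S$ is closed under pushout and transfinite composition. Granting (a)--(d), Smith's recognition theorem produces a combinatorial model structure with the stated cofibrations and weak equivalences and with fibrations those maps with the right lifting property against $\mathrm{cof}(I)\cap\mathcal{W}_S$.

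Step (c) is the one I expect to be the main obstacle. Because $\mathcal{M}$ is combinatorial and $S$ is a \emph{small} set, the full subcategory of $S$-local objects of $\mathcal{M}$ is accessible and accessibly embedded: the derived hom-spaces can be computed by an accessible functor (for instance via functorial simplicial frames, using that the Reedy model structure on simplicial objects in $\mathcal{M}$ is again combinatorial \cite[\S2]{MR1870516}), so the $S$-local objects form the intersection, over the set-many $f\in S$, of the subcategories of $f$-local objects, each of which is accessibly embedded and accessible --- and accessibly embedded accessible subcategories of an accessible category are closed under intersections indexed by a set. A class of morphisms detected, via the derived-hom bifunctor, by such a class of objects is then itself accessible and accessibly embedded in $\mathcal{M}^{[1]}$, which gives (c). For (d) I would use left properness of $\mathcal{M}$: the pushout of a cofibration lying in $\mathcal{W}_S$ along an arbitrary cofibration again lies in $\mathcal{W}_S$, which follows by applying the gluing lemma for left proper model categories to the derived mapping spaces into an $S$-local fibrant object; closure under transfinite composition follows analogously, using that the derived mapping space out of a transfinite composite of cofibrations is the homotopy inverse limit of the derived mapping spaces out of the stages.

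It remains to identify the fibrant objects of $\mathrm{L}_S\mathcal{M}$. Such an $X$ is in particular $\mathcal{M}$-fibrant; and since each $f\in S$ becomes a weak equivalence in $\mathrm{L}_S\mathcal{M}$, while for $A$ cofibrant and $X$ fibrant the derived hom-spaces in $\mathrm{L}_S\mathcal{M}$ and in $\mathcal{M}$ agree (by the argument of the proof of Lemma \ref{locwe}, applied to simplicial frames), the object $X$ is $S$-local. Conversely, if $X$ is $\mathcal{M}$-fibrant and $S$-local, then for any $i\colon A\lra B$ in $\mathrm{cof}(I)\cap\mathcal{W}_S$ the induced map $\underline{\Ho\mathcal{M}}(B,X)\lra\underline{\Ho\mathcal{M}}(A,X)$ is an isomorphism in $\mathscr{H}$ represented by a Kan fibration between Kan complexes (as $i$ is a cofibration and $X$ is fibrant), hence a trivial fibration, hence surjective on vertices; so every lifting problem of $i$ against $X$ admits a solution, and $X$ is $\mathrm{L}_S\mathcal{M}$-fibrant. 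That $\mathrm{L}_S\mathcal{M}$ is combinatorial is part of the conclusion of Smith's recognition theorem, and that it is left proper follows from the general principle that a Bousfield localisation of a left proper model category is left proper --- a consequence of the stability of $\mathcal{W}_S$ under pushout along cofibrations established in (d).
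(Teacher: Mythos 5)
Your proposal is correct in outline, but it takes a different route from the paper only in the sense that the paper does not actually prove this theorem: its ``proof'' is a citation to Barwick \cite[Theorem 4.7]{MR2771591}, supplemented by the observation that any Bousfield localisation of a left proper model category is left proper. What you have written is essentially a reconstruction of the content of the cited result, i.e.\ the standard Smith--Barwick argument: define $\mathcal{W}_S$ via derived hom-spaces into $S$-local fibrant objects, feed the pair $(\mathrm{cof}(I),\mathcal{W}_S)$ into Smith's recognition theorem for combinatorial model structures, and then identify the fibrant objects. The division of labour is also the standard one: the accessibility of $\mathcal{W}_S$ as a subcategory of $\mathcal{M}^{[1]}$ is the genuinely hard step, and your treatment of it is a plausible gesture rather than a proof (this is where the bulk of Barwick's and Lurie's write-ups is spent, via functorial framings and the theory of accessible categories); that is acceptable for a sketch, but you should be aware that it is not a routine verification.

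Two smaller points deserve correction. First, Smith's recognition theorem requires $\mathrm{cof}(I)\cap\mathcal{W}_S$ to be closed under pushout along \emph{arbitrary} morphisms, not merely along cofibrations as you state in (d); the same argument works, since left properness of $\mathcal{M}$ guarantees that the pushout of a cofibration along any map is a homotopy pushout, so that $\underline{\Ho\mathcal{M}}(-,X)$ carries such a square to a homotopy pullback for $X$ $S$-local fibrant. Second, your justification of left properness of $\mathrm{L}_S\mathcal{M}$ does not quite follow from (d), which concerns only those elements of $\mathcal{W}_S$ that are cofibrations. The correct argument (the one the paper gives) is to factor an arbitrary local weak equivalence as a local trivial cofibration followed by a weak equivalence of $\mathcal{M}$, and to observe that pushout along a cofibration preserves the first factor by the model category axioms for $\mathrm{L}_S\mathcal{M}$ and the second by left properness of $\mathcal{M}$.
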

\begin{proof}
See \cite[Theorem 4.7]{MR2771591}. Note that any Bousfield localisation of a left proper model category is left proper: any local weak equivalence admits a factorisation into a local trivial cofibration followed by a weak equivalence, and hence, if $\mathcal{M}$ is left proper, so too does any pushout of a local weak equivalence along a cofibration.
\end{proof}

To determine whether an object $X$ of a model category $\mathcal{M}$ is local with respect to some set of morphisms, one needs a model for the functor 
$\underline{\Ho\mathcal{M}}(-,X) : \Ho\mathcal{M}^\mathrm{op} \lra \mathscr{H}$,
which appeared in (\ref{precomploc}). In practice, such models can be easily recognised with the help of (the dual of) the following lemma, which implies that the derived right adjoint of a Quillen adjunction $F \dashv G\colon \mathcal{M}^\mathrm{op} \lra \sSet_\mathrm{K}$ is naturally isomorphic to the functor $\underline{\Ho\mathcal{M}}(-,X) : \Ho\mathcal{M}^\mathrm{op} \lra \mathscr{H}$ if $F(\Delta^0)$ is weakly equivalent to $X$ in $\mathcal{M}$.

\begin{lemma} \label{recoghom}
Let
\begin{equation*}
\xymatrix{
\mathcal{M} \ar@<-1.5ex>[rr]^-{\hdash}_-{G} && \ar@<-1.5ex>[ll]_-{F} \sSet_{\mathrm{K}}
}
\end{equation*}
be a Quillen adjunction between a model category $\mathcal{M}$ and the category of simplicial sets equipped with the model structure for Kan complexes. For each object $A$ of $\mathcal{M}$, the following are equivalent.
\begin{enumerate}[leftmargin=*, font=\normalfont, label=(\roman*)]
\item The $\mathscr{H}$-enriched right derived functor of $G$ is $\mathscr{H}$-naturally isomorphic to the $\mathscr{H}$-enriched representable functor
$\underline{\Ho\mathcal{M}}(A,-) : \underline{\Ho \mathcal{M}} \lra \underline{\mathscr{H}}$.
\item The objects  $F(\Delta^0)$ and $A$ are weakly equivalent in the model category $\mathcal{M}$.
\end{enumerate}
\end{lemma}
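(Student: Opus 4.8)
The plan is to pin down a single canonical description of the $\mathscr{H}$-enriched right derived functor $\mathbf{R}G$ as an $\mathscr{H}$-enriched representable functor, and then to read off both implications from it. The starting point is the observation that, since $\Delta^0$ is the unit object of the cartesian closed category $\mathscr{H}$, the $\mathscr{H}$-enriched representable functor $\underline{\mathscr{H}}(\Delta^0,-) \colon \underline{\mathscr{H}} \lra \underline{\mathscr{H}}$ is $\mathscr{H}$-naturally isomorphic to the identity functor. Feeding this into the $\mathscr{H}$-enriched derived adjunction $\mathbf{L}F \dashv \mathbf{R}G$ supplied by \cite[Theorem 5.6.2]{MR1650134}, I obtain, for each object $M$ of $\mathcal{M}$, an $\mathscr{H}$-natural chain of isomorphisms
\[
\mathbf{R}G(M) \;\cong\; \underline{\mathscr{H}}\bigl(\Delta^0, \mathbf{R}G(M)\bigr) \;\cong\; \underline{\Ho\mathcal{M}}\bigl(\mathbf{L}F(\Delta^0), M\bigr).
\]
Since every object of $\sSet_\mathrm{K}$ is cofibrant and $F$ is a left Quillen functor, $F(\Delta^0)$ is cofibrant, so $\mathbf{L}F(\Delta^0)$ is isomorphic in $\Ho\mathcal{M}$ to $F(\Delta^0)$; hence $\mathbf{R}G$ is $\mathscr{H}$-naturally isomorphic to the $\mathscr{H}$-enriched representable functor $\underline{\Ho\mathcal{M}}(F(\Delta^0),-)$.

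Granting this, the implication (ii) $\implies$ (i) is immediate: a zig-zag of weak equivalences between $F(\Delta^0)$ and $A$ in $\mathcal{M}$ is an isomorphism between these objects in the underlying category $\Ho\mathcal{M}$ of $\underline{\Ho\mathcal{M}}$, and so induces an $\mathscr{H}$-natural isomorphism $\underline{\Ho\mathcal{M}}(F(\Delta^0),-) \cong \underline{\Ho\mathcal{M}}(A,-)$; composing with the isomorphism of the previous paragraph yields (i).

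For the converse, I would suppose (i) holds, so that $\mathbf{R}G \cong \underline{\Ho\mathcal{M}}(A,-)$. Combined with $\mathbf{R}G \cong \underline{\Ho\mathcal{M}}(F(\Delta^0),-)$, this gives an $\mathscr{H}$-natural isomorphism $\underline{\Ho\mathcal{M}}(A,-) \cong \underline{\Ho\mathcal{M}}(F(\Delta^0),-)$. Passing to underlying ordinary functors — and using that, since $\Delta^0$ is the unit of $\mathscr{H}$, the underlying ordinary functor of an $\mathscr{H}$-enriched representable functor $\underline{\Ho\mathcal{M}}(C,-)$ is the ordinary representable functor $(\Ho\mathcal{M})(C,-) \colon \Ho\mathcal{M} \lra \Set$ — I obtain an isomorphism $(\Ho\mathcal{M})(A,-) \cong (\Ho\mathcal{M})(F(\Delta^0),-)$ of ordinary representable functors on $\Ho\mathcal{M}$. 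By the Yoneda lemma the representing objects are isomorphic, i.e.\ $A$ and $F(\Delta^0)$ are weakly equivalent in $\mathcal{M}$, which is (ii).

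The content of the argument is essentially all in the two standard (but easily mis-stated) book-keeping facts invoked above — that $\Delta^0$ is the unit of $\mathscr{H}$, so that $\underline{\mathscr{H}}(\Delta^0,-)$ is the identity and the underlying ordinary functor of an $\mathscr{H}$-enriched representable is the expected ordinary representable; and that the derived adjunction of a Quillen adjunction is an $\mathscr{H}$-enriched adjunction between the $\mathscr{H}$-enriched homotopy categories in the sense of \cite[Theorem 5.6.2]{MR1650134}. So the main obstacle is purely one of correctly citing and assembling the machinery of \cite[Chapter 5]{MR1650134}; no further homotopy-theoretic input is needed.
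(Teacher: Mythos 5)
Your proposal is correct and follows essentially the same route as the paper: both establish the chain $\mathbf{R}G(M) \cong \underline{\mathscr{H}}(\Delta^0,\mathbf{R}G(M)) \cong \underline{\Ho\mathcal{M}}(\mathbf{L}F(\Delta^0),M)$ via the $\mathscr{H}$-enriched derived adjunction, identify $\mathbf{L}F(\Delta^0)$ with $F(\Delta^0)$ by cofibrancy of $\Delta^0$, and conclude by the Yoneda lemma. Your only (harmless) deviation is passing to underlying ordinary functors for the direction (i) $\implies$ (ii), where the paper simply invokes the $\mathscr{H}$-enriched Yoneda lemma.
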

\begin{proof}
By the $\mathscr{H}$-enriched derived adjunction $\mathbf{L}F \dashv \mathbf{R}G \colon \underline{\Ho\mathcal{M}} \lra \underline{\mathscr{H}}$  of the Quillen adjunction $F \dashv G$, and  since $\Delta^0$ is the terminal object of the cartesian closed category $\mathscr{H}$, there exist isomorphisms in $\mathscr{H}$
\begin{equation*}
(\mathbf{R}G)X \cong \underline{\mathscr{H}}(\Delta^0,(\mathbf{R}G)X) \cong \underline{\Ho\mathcal{M}}((\mathbf{L}F)\Delta^0,X)
\end{equation*}
$\mathscr{H}$-natural in $X \in \underline{\Ho\mathcal{M}}$.
Since $\Delta^0$ is a cofibrant object of $\sSet_\mathrm{K}$, there exists an isomorphism $(\mathbf{L}F)\Delta^0 \cong F(\Delta^0)$ in $\Ho\mathcal{M}$. Hence the $\mathscr{H}$-enriched functor $\mathbf{R}G \colon \underline{\Ho\mathcal{M}} \lra \underline{\mathscr{H}}$ is represented by the object $F(\Delta^0)$. The result then follows from the $\mathscr{H}$-enriched Yoneda lemma.
\end{proof}

We conclude this section with two criteria for detecting Quillen equivalences between Bousfield localisations, which are stated in terms of local fibrant objects. These criteria distill presumably standard arguments;  we apply them in \S\ref{secquillen} to prove the Quillen equivalences mentioned in \S\ref{secintro}. First,  we give a necessary and sufficient condition for an adjunction to remain a Quillen adjunction after Bousfield localisation. 

\begin{proposition} \label{quillenprop}
Let $F \dashv G \colon \mathcal{M} \lra \mathcal{N}$ be a Quillen adjunction between model categories, and let $\mathcal{N}_\mathrm{loc}$ be a Bousfield localisation of $\mathcal{N}$. The adjunction $F\dashv G \colon \mathcal{M} \lra \mathcal{N}_\mathrm{loc}$ is a Quillen adjunction if and only if the functor $G$ sends each fibrant object of $\mathcal{M}$ to a fibrant object of $\mathcal{N}_\mathrm{loc}$.
\end{proposition}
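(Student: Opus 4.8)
The plan is to prove the two implications separately. The forward direction is immediate: if $F\dashv G\colon\mathcal{M}\lra\mathcal{N}_\mathrm{loc}$ is a Quillen adjunction, then its right adjoint $G$ preserves fibrations, and, being a right adjoint, preserves the terminal object; hence $G$ preserves fibrant objects, which is precisely the asserted condition. So the content lies in the converse, where one assumes that $G$ sends each fibrant object of $\mathcal{M}$ to a local fibrant object of $\mathcal{N}$.

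For the converse, I would first observe that $F$ still preserves cofibrations when regarded as a functor from $\mathcal{N}_\mathrm{loc}$ to $\mathcal{M}$: the cofibrations of $\mathcal{N}_\mathrm{loc}$ coincide with those of $\mathcal{N}$, and $F$ sends the latter to cofibrations of $\mathcal{M}$ because $F\dashv G\colon\mathcal{M}\lra\mathcal{N}$ is a Quillen adjunction. I would then invoke the standard recognition principle for Quillen adjunctions --- namely that, for an adjunction whose left adjoint preserves cofibrations, being a Quillen adjunction is equivalent to the right adjoint sending every fibration between fibrant objects to a fibration --- which reduces the problem to showing that $G$ sends each fibration $p\colon X\lra Y$ between fibrant objects of $\mathcal{M}$ to a local fibration. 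But $G(p)\colon G(X)\lra G(Y)$ is a fibration in $\mathcal{N}$, again because $F\dashv G\colon\mathcal{M}\lra\mathcal{N}$ is a Quillen adjunction; and $G(X)$ and $G(Y)$ are local fibrant objects by hypothesis; so $G(p)$ is a local fibration by \cite[Proposition 7.21]{MR2342834} (recalled above), as required.

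The main obstacle --- and the only step that is not a formal consequence of the results on Bousfield localisations already recalled in this appendix --- is the recognition principle used above, that is, the passage from arbitrary fibrations to fibrations between fibrant objects. Rather than cite it (cf.\ \cite[\S7]{MR2342834} and \cite[Appendix E]{joyalbarcelona}), one could spell out a proof of the half actually needed: a cofibration $i\colon C\lra D$ of $\mathcal{M}$ having the left lifting property with respect to every fibration between fibrant objects is a trivial cofibration. This follows by a fibrant-replacement argument: choosing a trivial cofibration $D\lra RD$ with $RD$ fibrant and factoring the composite $C\lra D\lra RD$ as a trivial cofibration $C\lra RC$ followed by a fibration $q\colon RC\lra RD$ (so that $RC$ is fibrant too), one lifts $i$ against $q$ and reads off from the resulting commuting maps that $i$ becomes invertible in $\Ho\mathcal{M}$, hence is a weak equivalence. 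Transposing the lifting property along the adjunction, and using the argument of the previous paragraph together with \cite[Proposition 7.21]{MR2342834}, then shows that $F$ sends trivial cofibrations of $\mathcal{N}_\mathrm{loc}$ to trivial cofibrations of $\mathcal{M}$, which is exactly what is needed for $F\dashv G\colon\mathcal{M}\lra\mathcal{N}_\mathrm{loc}$ to be a Quillen adjunction.
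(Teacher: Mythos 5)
Your proof is correct and follows essentially the same route as the paper's: necessity from right Quillen functors preserving fibrant objects, and sufficiency by combining the recognition principle of \cite[Proposition 7.15]{MR2342834} with the fact that a fibration between local fibrant objects is a local fibration \cite[Proposition 7.21]{MR2342834}. The only difference is that you additionally sketch a proof of the recognition principle itself (via the fibrant-replacement and retract-in-$\Ho\mathcal{M}$ argument), which the paper simply cites; that sketch is also sound.
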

\begin{proof}
The condition is necessary since right Quillen functors preserve fibrant objects. To prove the converse, it suffices by \cite[Proposition 7.15]{MR2342834} to prove that $F \colon \mathcal{N}_{\mathrm{loc}} \lra \mathcal{M}$ preserves cofibrations and that $G \colon \mathcal{M} \lra \mathcal{N}_\mathrm{loc}$ preserves fibrations between fibrant objects. The first holds since $\mathcal{N}$ and $\mathcal{N}_\mathrm{loc}$ share the same class of cofibrations and since $F \colon \mathcal{N} \lra \mathcal{M}$ preserves cofibrations. The second holds since the hypothesis implies that $G$ sends each fibration between fibrant objects in $\mathcal{M}$ to a fibration between local fibrant objects in $\mathcal{N}$, which by \cite[Proposition 7.21]{MR2342834} is a fibration in $\mathcal{N}_\mathrm{loc}$.
\end{proof}

Recall that a Quillen adjunction $F \dashv G \colon \mathcal{M} \lra \mathcal{N}$ is said to be a homotopy reflection if its derived right adjoint $\mathbf{R}G \colon \Ho\mathcal{M} \lra \Ho\mathcal{N}$ is fully faithful \cite[Definition E.2.15]{joyalbarcelona}.

\begin{theorem} \label{qethm1}
Let $F \dashv G \colon \mathcal{M} \lra \mathcal{N}$ be a homotopy reflection, and let $\mathcal{N}_\mathrm{loc}$ be a Bousfield localisation of $\mathcal{N}$. The adjunction $F\dashv G \colon \mathcal{M} \lra \mathcal{N}_\mathrm{loc}$ is a Quillen equivalence if and only if the following conditions are satisfied:
\begin{enumerate}[leftmargin=*, font=\normalfont, label=(\roman*)]
\item $G$ sends each fibrant object of $\mathcal{M}$ to a fibrant object of $\mathcal{N}_\mathrm{loc}$, and 
\item for every cofibrant fibrant object $X$ of $\mathcal{N}_\mathrm{loc}$, there exists a fibrant object $A$ of $\mathcal{M}$ and a weak equivalence $X \lra GA$ in $\mathcal{N}$.
\end{enumerate}
\end{theorem}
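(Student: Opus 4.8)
The plan is to reduce the statement to Proposition \ref{quillenprop} together with the fact that the homotopy-reflection property is inherited by Bousfield localisations. For the \emph{necessity} of (i) and (ii), suppose $F \dashv G \colon \mathcal{M} \lra \mathcal{N}_\mathrm{loc}$ is a Quillen equivalence. It is in particular a Quillen adjunction, so the right Quillen functor $G$ preserves fibrant objects, which is (i). For (ii) I would take a cofibrant fibrant object $X$ of $\mathcal{N}_\mathrm{loc}$, choose a fibrant replacement $FX \lra A$ in $\mathcal{M}$, and observe that its adjunct $X \lra GA$ is a local weak equivalence by the standard characterisation of Quillen equivalences (applied to the cofibrant object $X$ and the fibrant object $A$); since $X$ is local fibrant by hypothesis and $GA$ is local fibrant by (i), this morphism is then a weak equivalence in $\mathcal{N}$ by \cite[Proposition 7.18]{MR2342834}, giving (ii).

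For \emph{sufficiency}, assume (i) and (ii). By (i) and Proposition \ref{quillenprop}, the adjunction $F \dashv G \colon \mathcal{M} \lra \mathcal{N}_\mathrm{loc}$ is a Quillen adjunction; denote its derived right adjoint by $\mathbf{R}G^\mathrm{loc} \colon \Ho\mathcal{M} \lra \Ho\mathcal{N}_\mathrm{loc}$ (to distinguish it from the derived right adjoint $\mathbf{R}G$ of the unlocalised adjunction). The crux is that this localised adjunction is again a homotopy reflection: the right Quillen functor $G \colon \mathcal{M} \lra \mathcal{N}$ factors as the composite $\mathcal{M} \xrightarrow{G} \mathcal{N}_\mathrm{loc} \xrightarrow{1_\mathcal{N}} \mathcal{N}$ of right Quillen functors, so, since right derived functors of composable right Quillen functors compose, there is a natural isomorphism $\mathbf{R}G \cong \mathbf{R}(1_\mathcal{N}) \circ \mathbf{R}G^\mathrm{loc}$; as $\mathbf{R}G$ is fully faithful (the original adjunction being a homotopy reflection) and $\mathbf{R}(1_\mathcal{N})$ is fully faithful (as recalled near the start of this appendix), it follows that $\mathbf{R}G^\mathrm{loc}$ is fully faithful. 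It then suffices to show that $\mathbf{R}G^\mathrm{loc}$ is essentially surjective, for then it is an equivalence of categories and the Quillen adjunction is a Quillen equivalence. Here I would use (ii): every object of $\Ho\mathcal{N}_\mathrm{loc}$ is isomorphic there to a cofibrant fibrant object $X$ of $\mathcal{N}_\mathrm{loc}$, and by (ii) there is a fibrant object $A$ of $\mathcal{M}$ and a weak equivalence $X \lra GA$ in $\mathcal{N}$, which, since $\mathcal{W} \subseteq \mathcal{W}_\mathrm{loc}$, is a local weak equivalence, exhibiting $X \cong GA = \mathbf{R}G^\mathrm{loc}(A)$ in $\Ho\mathcal{N}_\mathrm{loc}$.

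The only genuinely delicate point I anticipate is the persistence of the homotopy-reflection property, that is, the factorisation $\mathbf{R}G \cong \mathbf{R}(1_\mathcal{N}) \circ \mathbf{R}G^\mathrm{loc}$ and the elementary categorical fact that full faithfulness of a composite $H \circ K$ together with full faithfulness of $H$ forces full faithfulness of $K$; once this is in place, conditions (i) and (ii) are recognised as saying precisely that $\mathbf{R}G^\mathrm{loc}$ exists (equivalently, that $F \dashv G$ remains a Quillen adjunction after localisation) and is essentially surjective. A minor subtlety worth flagging is the asymmetry in condition (ii), which demands a weak equivalence in $\mathcal{N}$ rather than merely in $\mathcal{N}_\mathrm{loc}$: in the necessity direction this stronger conclusion is exactly what \cite[Proposition 7.18]{MR2342834} supplies for a local weak equivalence between local fibrant objects, while in the sufficiency direction only the weaker consequence — that the morphism is a local weak equivalence — is actually invoked.
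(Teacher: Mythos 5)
Your proposal is correct and follows essentially the same route as the paper's own proof: necessity of (ii) via the derived unit at a cofibrant fibrant $X$ being a local weak equivalence between local fibrant objects (hence a weak equivalence in $\mathcal{N}$), and sufficiency by factoring the original homotopy reflection through the localised Quillen adjunction to deduce full faithfulness of the localised derived right adjoint, with (ii) supplying essential surjectivity. No gaps; your closing remark about only needing the weaker "local weak equivalence" consequence of (ii) in the sufficiency direction is accurate.
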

\begin{proof}
Suppose that the adjunction $F\dashv G \colon \mathcal{M} \lra \mathcal{N}_\mathrm{loc}$ is a Quillen equivalence. Condition (i) holds by Proposition \ref{quillenprop}.  To prove condition (ii), let $X$ be a cofibrant fibrant object of $\mathcal{N}_\mathrm{loc}$. Then the composite morphism
\begin{equation*}
\cd{
X \ar[r]^-{\eta_X} & GFX \ar[r]^-{Gr} & G(FX)^f,
}
\end{equation*}
where $\eta$ is the unit of the adjunction $F \dashv G$ and $r \colon FX \lra (FX)^f$ is a fibrant replacement of $FX$ in $\mathcal{M}$, is the component of the derived unit of this Quillen equivalence at the cofibrant object $X$, and is therefore a local weak equivalence between local fibrant objects, and hence also a weak equivalence in $\mathcal{N}$.

Conversely, suppose that the conditions (i) and (ii) hold. Condition (i) implies that the adjunction $F\dashv G \colon \mathcal{M} \lra \mathcal{N}_\mathrm{loc}$ is a Quillen adjunction by Proposition \ref{quillenprop}. To show that this Quillen adjunction is a Quillen equivalence,  it suffices to show that its derived right adjoint $\mathbf{R}G \colon \Ho\mathcal{M} \lra \Ho \mathcal{N}_\mathrm{loc}$ is an equivalence of categories. Since the original homotopy reflection is equal to the composite Quillen adjunction
\begin{equation*}
\xymatrix{
\mathcal{M} \ar@<-1.5ex>[rr]^-{\hdash}_-G && \ar@<-1.5ex>[ll]_-{F} \mathcal{N_{\mathrm{loc}}} \ar@<-1.5ex>[rr]^-{\hdash}_-{1_\mathcal{N}} && \ar@<-1.5ex>[ll]_-{1_\mathcal{N}} \mathcal{N},
}
\end{equation*}
 we have that the composite of the functor $\mathbf{R}G \colon \Ho\mathcal{M} \lra \Ho \mathcal{N}_\mathrm{loc}$ with the fully faithful functor $\mathbf{R}1_\mathcal{N} \colon \Ho\mathcal{N}_\mathrm{loc} \lra \Ho\mathcal{N}$ is fully faithful, and hence that the functor $\mathbf{R}G \colon \Ho\mathcal{M} \lra \Ho \mathcal{N}_\mathrm{loc}$ is fully faithful. Since every object of $\Ho \mathcal{N}_\mathrm{loc}$ is isomorphic to a cofibrant fibrant object of $\mathcal{N}_\mathrm{loc}$, condition (ii) implies that the functor $\mathbf{R}G \colon \Ho\mathcal{M} \lra \Ho \mathcal{N}_\mathrm{loc}$ is essentially surjective on objects, and therefore an equivalence of categories.
\end{proof}

Finally, we give a necessary and sufficient condition for a Quillen equivalence to remain a Quillen equivalence after Bousfield localisation. 

\begin{theorem} \label{qethm2}
Let $F \dashv G \colon \mathcal{M} \lra \mathcal{N}$ be a Quillen equivalence between model categories, and let $\mathcal{M}_\mathrm{loc}$ and $\mathcal{N}_\mathrm{loc}$ be Bousfield localisations of $\mathcal{M}$ and $\mathcal{N}$ respectively. The adjunction $F \dashv G \colon \mathcal{M}_\mathrm{loc} \lra \mathcal{N}_\mathrm{loc}$ is a Quillen equivalence if and only if a fibrant object $A$ of $\mathcal{M}$ is fibrant in $\mathcal{M}_\mathrm{loc}$ precisely when $GA$ is fibrant in $\mathcal{N}_\mathrm{loc}$.
\end{theorem}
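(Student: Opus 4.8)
\emph{Proof proposal.} The plan is to deduce this from \textup{Theorem \ref{qethm1}} by first localising only the domain of $G$. The Quillen equivalence $F \dashv G \colon \mathcal{M} \lra \mathcal{N}$ is in particular a homotopy reflection, and the identity functors between $\mathcal{M}_\mathrm{loc}$ and $\mathcal{M}$ form a Quillen adjunction whose derived right adjoint $\mathbf{R}1 \colon \Ho\mathcal{M}_\mathrm{loc} \lra \Ho\mathcal{M}$ is fully faithful. Composing these two Quillen adjunctions, I would first observe that $F \dashv G \colon \mathcal{M}_\mathrm{loc} \lra \mathcal{N}$ is again a Quillen adjunction, and that its derived right adjoint, being the composite of the fully faithful $\mathbf{R}1 \colon \Ho\mathcal{M}_\mathrm{loc} \lra \Ho\mathcal{M}$ with the equivalence $\mathbf{R}G \colon \Ho\mathcal{M} \lra \Ho\mathcal{N}$, is fully faithful; hence $F \dashv G \colon \mathcal{M}_\mathrm{loc} \lra \mathcal{N}$ is a homotopy reflection. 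Applying \textup{Theorem \ref{qethm1}} to this homotopy reflection and the Bousfield localisation $\mathcal{N}_\mathrm{loc}$ of $\mathcal{N}$, the adjunction $F \dashv G \colon \mathcal{M}_\mathrm{loc} \lra \mathcal{N}_\mathrm{loc}$ is a Quillen equivalence if and only if (i) $G$ sends each fibrant object of $\mathcal{M}_\mathrm{loc}$ to a fibrant object of $\mathcal{N}_\mathrm{loc}$, and (ii) for every cofibrant fibrant object $X$ of $\mathcal{N}_\mathrm{loc}$ there is a fibrant object $A$ of $\mathcal{M}_\mathrm{loc}$ and a weak equivalence $X \lra GA$ in $\mathcal{N}$.

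It then remains to check that conditions (i) and (ii) together are equivalent to the condition in the statement, after which both implications of the theorem follow at once. Since the fibrant objects of $\mathcal{M}_\mathrm{loc}$ are precisely those objects of $\mathcal{M}$ that are fibrant in $\mathcal{M}_\mathrm{loc}$, and likewise for $\mathcal{N}$, condition (i) is literally the ``only if'' half of the stated condition, so it suffices to prove that, granting (i), condition (ii) is equivalent to the ``if'' half. For one direction, assume (i) and (ii), let $A$ be fibrant in $\mathcal{M}$ with $GA$ fibrant in $\mathcal{N}_\mathrm{loc}$, and take a cofibrant--fibrant replacement $X$ of $GA$ in $\mathcal{N}$; since $X$ is fibrant in $\mathcal{N}$ and weakly equivalent in $\mathcal{N}$ to the object $GA$ which is fibrant in $\mathcal{N}_\mathrm{loc}$, Lemma \ref{weloc} gives that $X$ is fibrant in $\mathcal{N}_\mathrm{loc}$, hence a cofibrant fibrant object of $\mathcal{N}_\mathrm{loc}$, and (ii) produces a fibrant object $B$ of $\mathcal{M}_\mathrm{loc}$ together with a weak equivalence $X \lra GB$ in $\mathcal{N}$. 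Then $GA$ and $GB$ are weakly equivalent in $\mathcal{N}$; since $\mathbf{R}G \colon \Ho\mathcal{M} \lra \Ho\mathcal{N}$ is an equivalence of categories and $A,B$ are fibrant in $\mathcal{M}$, this forces $A$ and $B$ to be weakly equivalent in $\mathcal{M}$, whence $A$ is fibrant in $\mathcal{M}_\mathrm{loc}$ by Lemma \ref{weloc}. For the converse direction, assume the stated condition; then (i) is immediate, and for (ii) one takes a cofibrant fibrant object $X$ of $\mathcal{N}_\mathrm{loc}$ and forms the derived unit $X \lra G\bigl((FX)^f\bigr)$ of $F \dashv G \colon \mathcal{M} \lra \mathcal{N}$, where $(FX)^f$ is a fibrant replacement of $FX$ in $\mathcal{M}$: this is a weak equivalence in $\mathcal{N}$ because $F \dashv G \colon \mathcal{M} \lra \mathcal{N}$ is a Quillen equivalence and $X$ is cofibrant, so $G\bigl((FX)^f\bigr)$ is weakly equivalent in $\mathcal{N}$ to the object $X$ which is fibrant in $\mathcal{N}_\mathrm{loc}$, hence fibrant in $\mathcal{N}_\mathrm{loc}$ by Lemma \ref{weloc}, and therefore $(FX)^f$ is fibrant in $\mathcal{M}_\mathrm{loc}$ by the stated condition, giving (ii).

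I expect the main obstacle to be the ``if'' direction of the equivalence between the stated condition and condition (ii): specifically the step that transports a weak equivalence $GA \simeq GB$ in $\mathcal{N}$ between images of objects fibrant in $\mathcal{M}$ back to a weak equivalence $A \simeq B$ in $\mathcal{M}$. This is precisely where the full strength of the hypothesis that $F \dashv G \colon \mathcal{M} \lra \mathcal{N}$ is a Quillen equivalence (rather than merely a homotopy reflection) is used, through the conservativity of $\mathbf{R}G$ on $\Ho\mathcal{M}$; the remaining delicate point is the bookkeeping of cofibrant versus fibrant replacements needed to invoke Lemma \ref{weloc} at the right places, together with the initial verification that composing with the identity Quillen adjunction does turn $F \dashv G \colon \mathcal{M} \lra \mathcal{N}$ into a homotopy reflection $F \dashv G \colon \mathcal{M}_\mathrm{loc} \lra \mathcal{N}$.
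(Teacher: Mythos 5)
Your proposal is correct and follows essentially the same route as the paper: both form the composite homotopy reflection $F \dashv G \colon \mathcal{M}_\mathrm{loc} \lra \mathcal{N}$ by composing with the identity Quillen adjunction, apply Theorem \ref{qethm1} against the localisation $\mathcal{N}_\mathrm{loc}$, and then translate conditions (i) and (ii) into the stated fibrancy criterion using cofibrant replacements, the derived unit, Lemma \ref{weloc}, and the (full) faithfulness of $\mathbf{R}G$. The only cosmetic differences are that the paper obtains $(GA)^c$ fibrant in $\mathcal{N}_\mathrm{loc}$ directly from the trivial fibration being a local fibration rather than via Lemma \ref{weloc}, and invokes only fully faithfulness of $\mathbf{R}G$ where you invoke the equivalence.
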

\begin{proof}
The composite Quillen adjunction
\begin{equation} \label{compadj}
\xymatrix{
\mathcal{M}_\mathrm{loc} \ar@<-1.5ex>[rr]^-{\hdash}_-{1_{\mathcal{M}}} && \ar@<-1.5ex>[ll]_-{1_{\mathcal{M}}} \mathcal{M} \ar@<-1.5ex>[rr]^-{\hdash}_-{G} && \ar@<-1.5ex>[ll]_-{F} \mathcal{N}
}
\end{equation}
is a homotopy reflection, since it is a composite of homotopy reflections. Hence the adjunction $F \dashv G \colon \mathcal{M}_\mathrm{loc} \lra \mathcal{N}_\mathrm{loc}$ is a Quillen equivalence if and only if the homotopy reflection (\ref{compadj}) satisfies the conditions of Theorem \ref{qethm1}. It will therefore suffice to show that these conditions are equivalent to that of the present theorem.

Suppose that the homotopy reflection (\ref{compadj}) satisfies the conditions (i) and (ii) of Theorem \ref{qethm1}, and let $A$ be a fibrant object of $\mathcal{M}$. If $A$ is fibrant in $\mathcal{M}_\mathrm{loc}$, then by condition (i), $GA$ is a fibrant object of $\mathcal{N}_\mathrm{loc}$. Conversely, suppose $GA$ is a fibrant object of $\mathcal{N}_\mathrm{loc}$. Let $(GA)^c \lra GA$ be a cofibrant replacement of $GA$ in $\mathcal{N}$, chosen to be a trivial fibration. Then $(GA)^c$ is a cofibrant fibrant object of $\mathcal{N}_\mathrm{loc}$, and so by condition (ii), there exists a fibrant object $B$ of $\mathcal{M}_\mathrm{loc}$ and a weak equivalence $(GA)^c \lra GB$ in $\mathcal{N}$. Hence the objects $GA$ and $GB$ are weakly equivalent in $\mathcal{N}$, and since the derived right adjoint of the Quillen equivalence  $F \dashv G\colon \mathcal{M} \lra \mathcal{N}$ is fully faithful, it follows that $A$ and $B$ are weakly equivalent in $\mathcal{M}$. Hence $A$ is a fibrant object in $\mathcal{M}_\mathrm{loc}$ by Lemma \ref{weloc}.

To prove the converse, suppose that a fibrant object $A$ of $\mathcal{M}$ is fibrant in $\mathcal{M}_\mathrm{loc}$ precisely when $GA$ is fibrant in $\mathcal{N}_\mathrm{loc}$. One half of this assumption is precisely condition (i) of Theorem \ref{qethm1}. To verify condition (ii) of that theorem, let $X$ be a cofibrant fibrant object of $\mathcal{N}_\mathrm{loc}$. For any fibrant replacement $r \colon FX \lra (FX)^f$ of $FX$ in $\mathcal{M}$, the composite morphism 
\begin{equation*}
\cd{
X \ar[r]^-{\eta_X} & GFX \ar[r]^-{Gr} & G(FX)^f
}
\end{equation*}
gives the component at $X$ of the derived unit of the original Quillen equivalence, and is thus a weak equivalence in $\mathcal{N}$. Hence $G(FX)^f$ is weakly equivalent in $\mathcal{N}$ to the fibrant object $X$ of $\mathcal{N}_\mathrm{loc}$, and is therefore itself fibrant in $\mathcal{N}_\mathrm{loc}$ by Lemma \ref{weloc}. The other half of our assumption now implies that $(FX)^f$ is a fibrant object of $\mathcal{M}_\mathrm{loc}$, thus verifying condition (ii).
\end{proof}

\bibliographystyle{alpha}

\end{document}